\documentclass[11pt]{amsart}
\usepackage{graphicx}
\usepackage{amssymb}
\usepackage{epstopdf}
\usepackage{amsfonts}
\usepackage{amsmath}
\usepackage{esint,color}
\usepackage{amsthm}
\usepackage{enumerate}
\usepackage{mathtools}

\DeclareGraphicsRule{.tif}{png}{.png}{`convert #1 `dirname #1`/`basename #1 .tif`.png}


\newcommand{\R}{{\mathbb R}}

\newcommand{\N}{{\mathbb N}}

\newcommand{\cS}{{\mathcal S}}

\newcommand{\cB}{{\mathcal B}}

\newcommand{\cN}{{\mathcal N}}

\newcommand{\e}{\varepsilon}
\newcommand{\vp}{\varphi}
\newcommand{\osc}{\operatornamewithlimits{osc}}

\newcommand{\ddiv}{\operatorname{div}}
\newcommand{\diam}{\operatorname{diam}}
\newcommand{\dist}{\operatorname{dist}}

\newcommand{\tr}{\operatorname{tr}}
\newcommand{\ra}{\rightarrow}

\newcommand\norm[1]{\left\Arrowvert {#1} \right\Arrowvert}

\theoremstyle{plain}
\newtheorem{theorem}{Theorem}[section]

\newtheorem{corollary}[theorem]{Corollary}

\newtheorem{lemma}[theorem]{Lemma}
\newtheorem{proposition}[theorem]{Proposition}

\theoremstyle{definition}

\theoremstyle{remark}



\newtheorem{remark}[theorem]{Remark}

\numberwithin{equation}{section}

\title[Nodal Sets for Broken Quasilinear Equations]{Nodal Sets for Broken Quasilinear Partial Differential Equations with Dini Coefficients}
\author{Sunghan Kim}
\address{Department of Mathematics, KTH Royal Institute of Technology, 100 44 Stockholm, Sweden}
\email{sunghan@kth.se}
\thanks {This project was supported by Knut and Alice Wallenberg Foundation.}

\begin{document}
\maketitle


\begin{abstract}
This paper is concerned with the nodal set of weak solutions to a broken quasilinear partial differential equation, 
\begin{equation*}
\ddiv (a_+ \nabla u^+ - a_- \nabla u^-) = \ddiv f,
\end{equation*}
where $a_+$ and $a_-$ are uniformly elliptic, Dini continuous coefficient matrices, subject to a strong correlation that $a_+$ and $a_-$ are a multiple of some scalar function to each other. Under such a structural condition, we develop an iteration argument to achieve higher-order approximation of solutions at a singular point, which is also new for standard elliptic PDEs below H\"older regime, and as a result, we establish a structure theorem for singular sets. We also estimate the Hausdorff measure of nodal sets, provided that the vanishing order of given solution is bounded throughout its nodal set, via an approach that extends the classical argument to certain solutions with discontinuous gradient. Besides, we also prove Lipschitz regularity of solutions and continuous differentiability of their nodal set around regular points.
\end{abstract}

\tableofcontents


\section{Introduction}\label{section:intro}

This article is concerned with nodal set of weak solutions to a broken\footnote{The term ``broken'' was first introduced in an earlier collaboration \cite{KLS2} by Lee, Shahgholian and the author, to highlight the fact that $u$ can be viewed as a regular solution with a break across its nodal set $\{u=0\}$; for instance, in the simplest setting where $a_+ = I$ and $a_- = \kappa_0 I$ for some real constant $\kappa_0$, $u^+ - \kappa_0 u^-$ becomes a harmonic function.
} quasilinear partial differential equation (PDE in the sequel), 
\begin{equation}\label{eq:main-gen}
\ddiv(a_+\nabla u^+ - a_-\nabla u^-) = \ddiv f,
\end{equation} 
where $a_+$ and $a_-$ are uniformly elliptic matrices that are scalar multiple to each other, i.e., $a_- = \kappa a_+$ for some real-valued function $\kappa$; here $u^+ = \max\{u,0\}$ and $u^- = -\min\{u,0\}$. By nodal set, we shall indicate the zero-level set, $\{u=0\}$, of given solution. 

The strong correlation between $a_+$ and $a_-$ is assumed to obtain Lipschitz regularity of weak solutions to \eqref{eq:main-gen}. In fact, the solutions are not Lipschitz regular in general, without the correlation between the coefficients, even when both $a_+$ and $a_-$ are constant matrices. Here we assume this relation in order to study finer properties of the nodal set of the solutions. 

Nevertheless, it is challenging to analyse the structure of singular set\footnote{The set where solutions vanish faster than any linear function.} or estimate the (lower dimensional) measure of the nodal set, when the solutions are only Lipschitz continuous. Many problems ranging from general elliptic/parabolic PDEs to certain free boundary problems are studied with $C^{1,\alpha}$-solutions, accompanied with some strong tools, such as Almgren's or Weiss' type monotonicity formulae. Unfortunately, neither of them are available for weak solutions to \eqref{eq:main-gen}, even under the assumption that $a_+$ and $a_-$ are scalar multiple to each other. 

The purpose of this paper is to extend the classical geometric measure estimate of the nodal set, derived by Hardt and Simon \cite{HS} (which was later refined by Han and Lin \cite{HL94}), and the classical structure theorem for its singular part, as in \cite{Han}, when given solutions do not have continuous gradient across the nodal set. Our approach will be based on an observation of a particular structure in the break of the gradient, induced by the strong correlation between coefficients $a_+$ and $a_-$. 

Concerning the geometric measure of the nodal set of weak solutions to \eqref{eq:main-gen}, a key observation we make here is that although $\nabla u$ is only bounded and not continuous across the nodal set, mapping $\nabla u^+ - \kappa \nabla u^-$ (with $\kappa$ being the scalar function for which $a_- = \kappa a_+$) admits a uniformly continuous version, $Vu$ (see Theorem \ref{theorem:int-Lip}), such that if $u^+ - \kappa u^-$ approximates a harmonic function, then $Vu$ approximates the gradient of the harmonic function (Lemma \ref{lemma:compact}) in the sup-norm, from which a nodal set comparison lemma (Lemma \ref{lemma:compare}) for solutions to broken quasilinear PDEs follows. For this reason, we are able to follow the main idea of \cite{HS} and \cite{HL94}, and establish the desired measure estimate of the nodal set. Let us remark that our result (Theorem \ref{theorem:meas}) is also new in the context of standard elliptic PDEs without jump discontinuity, as we are concerned with divergence-type PDEs with Dini coefficients. Nevertheless, it does not require serious modification to extend the argument in \cite{HS} and \cite{HL94} to Dini regime, if there is no break in the coefficient. Instead, we claim the novelty of our measure estimate in the aspect that it considers discontinuous coefficients, and overcomes the difficulties arising from the break in the gradient. 

Our analysis on the singular set resembles an earlier collaboration \cite{KLS2} between Lee, Shahgholian and the author, in the sense that here we also establish uniform point-wise approximation (Lemma \ref{lemma:int-C1} and Lemma \ref{lemma:int-Cd}) for the corrected version, $u^+ - \kappa(z) u^-$, for each point $z$. However, we develop a new iteration argument (Proposition \ref{proposition:v-C1} and Proposition \ref{proposition:v-Cd}) to perform the uniform approximation, since our coefficients $a_+$ and $a_-$ are assumed to be Dini continuous, while the argument involved in \cite{KLS2} is only available for H\"older coefficients. 

Let us stress that our iteration scheme for higher-order approximation (Proposition \ref{proposition:v-Cd}) is also new in the context of standard (i.e., $a_+ = a_-$) elliptic PDEs, since the existing results, e.g., \cite{Bers}, \cite{Han}, \cite{Chen} and many others, are concerned with H\"older coefficients. As a result, we establish a structure theorem (Theorem \ref{theorem:sing}) for the singular set, and to the best of the author's knowledge, it is the first result available below the H\"older regime, even when there is no jump discontinuity of the coefficients. 

Another interesting observation regarding our structure theorem is that it also improves the classical result \cite{Han} by Han, when we recover it to H\"older coefficients. This is because Theorem \ref{theorem:sing} asserts that not only the singular set of dimension $n-2$ (with $n$ the space dimension) but also those with lower dimensions are equally contained in a countable union of $C^{1,\alpha}$-manifolds. Such an improvement has interests on its own as it implies that what is known to be the ``bad'' part of the singular set, i.e., the part with dimension strictly less than $n-2$, actually turns out to be a ``good'' part as well. 

For the rest of this section, let us illustrate the background of broken quasilinear PDEs \eqref{eq:main-gen}, and present some recent development around this problem.  

One can understand \eqref{eq:main-gen} as an elliptic free boundary problem arising from jump of conductivity, which models a composite material that undergoes discontinuous conductivity change subject to transition of one phase to another; for more concrete examples as well as the existence of weak solutions, we refer to \cite{KLS1}. Here the free boundary is the nodal set $\{u = 0\}$. As in \cite{AM13}, one can also derive \eqref{eq:main-gen} from a solution to a simple Bellman equation subject to two strategies, i.e., $\min\{\tr(a_+D^2 w),\tr(a_- D^2 w)\} = 0$. In this case, $u = \tr (a_+ D^2 w)$ becomes a weak solution to \eqref{eq:main-gen} (obviously with $\ddiv f = 0$), and the free boundary, $\{u=0\}$, becomes the threshold of switching strategies governed by $a_+$ and respectively $a_-$. Another variant closely related to \eqref{eq:main-gen} is the free transmission problem, considered by Amaral and Teixeira \cite{AT15}, in which case $u$ is given, instead, as a minimiser of functional $\int ( a_+|\nabla u^+|^2 + a_- |\nabla u^-| ^2 )\,dx$; heuristically, a first variation leads us to \eqref{eq:main-gen} with $\ddiv f$ replaced by a measure supported on the free boundary $\{u=0\}$.  

Let us overview some recent progress of \eqref{eq:main-gen} from the standpoint of free boundary problems. Without the key assumption that $a_+$ and $a_-$ are scalar multiple to each other, there has been only a few literature concerning the free boundary stemming from \eqref{eq:main-gen}, even in the simplest scenario that $a_+$, $a_-$ and $f$ are all constant (so that $\ddiv f = 0$). As mentioned briefly in the beginning, one of the main reasons is that the solutions are not Lipschitz regular in general, since there are homogeneous solutions that are only H\"older continuous, for every dimension larger than or equal to $3$. Only recently, Caffarelli, De Silva and Savin \cite{CDS18} proved that the Lipschitz regularity is true in space dimension $2$, without such an additional condition. Another reason is that the monotonicity formulae, such as the Alt-Caffarelli-Friedman functional, or balanced-energy functionals of Weiss' type, tend to fail as well, making it difficult to analyse the local properties of the free boundary via global solutions. As a matter of fact, global solutions to \eqref{eq:main-gen} are also interesting objects that deserve attention, although not much has been developed yet. 

It is worthwhile to address that Andersson and Mikayelyan \cite{AM13} established some meaningful results with $a_+$, $a_-$ and $f$ constant, but without the assumption that $a_+$ and $a_-$ are correlated by scalar multiplication. Owing to the lack of Lipschitz regularity of solutions, a novel approach was taken to ensure the flatness of the free boundary, $\{u=0\}$, that involved the measure-theoretic property of $\ddiv (a_+ \nabla u^+)$. With the flatness at hand, the $C^{1,\alpha}$-regularity of the free boundary around a regular point was achieved with a slight modification of the classical bootstrap argument \cite{CS05} for Bernoulli-type free boundary problems.

Later in \cite{KLS1}, Lee, Shahgholian and the author considered \eqref{eq:main-gen} with variable coefficients $a_+$ and $a_-$, and constant $f$ (so that $\ddiv f = 0$) and proved Lipschitz regularity of weak solutions to \eqref{eq:main-gen} under the aforementioned structural condition on $a_+$ and $a_-$, as well as Dini continuity of the coefficients. The key ingredient there was the monotonicity of the Alt-Caffarelli-Friedman functional, which was available particularly due to the additional correlation between $a_+$ and $a_-$. Then the authors proved $C^{1,\alpha}$-regularity of the free boundary around regularity points, partly following the argument of Andersson and Mikayelyan \cite{AM13}, provided that the coefficients are H\"older continuous. Although the right hand side was considered to be identically zero in \cite{KLS1}, the method can be easily extended to the case when $\ddiv f \in L^\infty$, i.e., $f\in C^{0,1}$. 

In a more recent collaboration \cite{KLS2}, the authors observed that after a change of coordinates, function $v_z = u^+ - \kappa(z) u^-$, for each interior point $z$, satisfies 
\begin{equation*}
\Delta v_z = \ddiv F_z,
\end{equation*}
where $\norm{F_z}_{L^2(B_r(z))} = O(r^{\frac{n}{2} + \alpha - \e})$, for any small $\e$, provided that $a_+$, $a_-$ and $f$ are all H\"older continuous with exponent $\alpha$. Hence, one can approximate $v_z$ with a harmonic polynomial by a classical iteration argument. By this approach, the Lipschitz regularity of solution $u$ as well as the $C^{1,\alpha}$-regularity of its free boundary $\{u = 0\}$ were revisited, and were also proven in a wider class than the one considered in \cite{KLS1}, since the monotonicity of the Alt-Caffarelli-Friedman functional fails when the right hand side, $\ddiv f$, of \eqref{eq:main-gen} is given with $f\in C^{0,\alpha} \setminus C^{0,1}$. Moreover, a structure theorem was established for the singular part of the free boundary, i.e., where the solution vanishes faster than any linear function, which asserts that vanishing order of the solution has to be an integer, and the set where the solution vanishes with the same (finite) order is contained in a countable union of $C^1$-manifolds. 

In this paper, we extend from H\"older to Dini regime the main assertions in \cite{KLS2}, which are the Lipschitz regularity (Theorem \ref{theorem:int-Lip}) of solutions, continuous differentiability (Theorem \ref{theorem:fb-reg}) of the nodal set around regular points, and the structure theorem (Theorem \ref{theorem:sing}) of the singular set; in addition to that, we also establish uniform doubling property (Theorem \ref{theorem:double}) and a geometric measure estimate (Theorem \ref{theorem:meas}) of the nodal set. 

One can also study \eqref{eq:main-gen} from the viewpoint of the nodal set theory. Due to the rich amount of literature in the area, we shall only discuss some important results closely related to our work. We refer to \cite{Bers}, \cite{Chen} and \cite{Han}, for some development in early stages regarding (higher-order) approximation of solutions by (homogeneous) harmonic polynomials around their vanishing points. As mentioned above, most of the literature in this direction is concerned with H\"older coefficients, and we believe that this paper is the first to bring the theory down to the regime of Dini coefficients. 

Concerning the measure estimate of the nodal set, we would like to mention classical results \cite{HS} and \cite{HL94}, which study the nodal set of solutions to uniformly elliptic PDEs in non-divergence form with merely continuous coefficients (so the argument can be easily modified for H\"older continuous PDEs in divergence form). Most of the literature, however, considers coefficients with Lipschitz regularity, since then Almgren's frequency formula becomes monotone \cite{GL}, from which one can deduce fine properties of the nodal set. 

Recently, there has been some outstanding improvement in \cite{CNV} and \cite{NV} regarding the measure estimate of singular/critical sets, where properties of harmonic functions and solutions to elliptic PDEs with Lipschitz coefficients are thoroughly investigated via the monotonicity of Almgren's frequency. The latter result, in particular, significantly improves the classical estimate, e.g., \cite{HHL}, which was only available for smooth PDEs. Moreover, as a byproduct, it also provides some valuable estimates for the size of (so-called effective) nodal sets. 

In connection to our structure theorem (Theorem \ref{theorem:sing}), we would also like to address a recent article \cite{BET} that studies the structure of a set that can be approximated by the nodal sets of harmonic polynomials. We believe that our nodal/singular set can also be analysed by their approach, as our solution after adjusting the break (i.e., $u^+ - \kappa(z) u^-$) can be approximated by homogeneous harmonic polynomials (Lemma \ref{lemma:int-C1} and Lemma \ref{lemma:int-Cd}) at each vanishing point. However, our result is (strictly) finer, since it gives the regularity of the manifolds that cover the lower dimensional singular sets as well. 

This paper is organised as follows. In Section \ref{section:notation}, we introduce some notation, terminologies and standing assumptions that will be used throughout the paper. In Section \ref{section:int-reg}, we prove Lipschitz regularity of solutions (Theorem \ref{theorem:int-Lip}). In Section \ref{section:reg}, we establish continuous differentiability of the regular part of nodal sets (Theorem \ref{theorem:fb-reg}). In Section \ref{section:sing}, we present a structure theorem for singular sets (Theorem \ref{theorem:sing}). In Section \ref{section:double}, we study uniform doubling property of solutions vanishing with finite orders (Theorem \ref{theorem:double}), and finally in Section \ref{section:nodal} we estimate measure of the nodal sets.  


\section{Notation and Standing Assumption}\label{section:notation}

By $n$ we denote the space dimension, and we shall always assume $n\geq 2$. By $\Omega$ we denote a domain, i.e., open, connected set, in $\R^n$. By $\N$ we denote the set of natural numbers, $\{1,2,3\cdots\}$. Given $z\in\R^n$ and $r>0$, by $B_r(z)$ we denote the ball in $\R^n$ of radius $r$ centred at $z$. Given a set $E$, by $\overline E$ we denote the closure of $E$, and by $\partial E$ its topological boundary, $\overline E\setminus E$. By $H^{n-1}$ we denote the $(n-1)$-dimensional Hausdorff measure. 

We shall say that $\omega$ is a modulus of continuity, if $\omega:(0,1]\ra (0,\infty)$ satisfies $\lim_{r\ra 0} \omega(r) = 0$, $\omega(1) < \infty$, $\omega(r)$ is nondecreasing while $r^{-1}\omega(r)$ is non-increasing in $r$. We shall call $\omega$ a Dini modulus of continuity, or say that $\omega$ satisfies the Dini condition, if $\omega$ is a modulus of continuity satisfying $
\int_0^1 r^{-1}\omega(r) \,dr < \infty$. Also given a Dini modulus of continuity $\omega$, we shall denote by $\omega_1$ the modulus of continuity defined by $\omega_1 (r) = \int_0^r s^{-1}\omega(s)\,ds + r\int_r^1 s^{-2}\omega(s)\,ds$, unless stated otherwise. 

As usual, space $C^k(\Omega)$ consists of all $k$-times continuously differentiable functions on $\Omega$. Given a modulus of continuity $\omega$, $C^{k,\omega}(\Omega)$ consists of all functions $f\in C^k(\Omega)$, for which the norm 
\begin{equation*}
\norm{f}_{C^{k,\omega}(\Omega)} = \max_{|\beta|\leq k} \sup_{x\in \Omega} |\partial^\beta f(x)| + \max_{|\beta| = k} \sup_{\substack{x,y\in\Omega \\ 0<|x-y|\leq 1}} \frac{|\partial^\beta f(x) - \partial^\beta f(y)|}{\omega(|x-y|)}
\end{equation*} 
is finite. Space $C_{loc}^{k,\omega}(\Omega)$ consists of all functions $f\in C^{k,\omega}(K)$ for every compact subset $K\subset\Omega$. By $C^{k,\alpha}(\Omega)$ (and similarly $C_{loc}^{k,\alpha}(\Omega)$) we denote the space $C^{k,\omega}(\Omega)$ (and respectively $C_{loc}^{k,\omega}(\Omega)$) with $\omega(r) = cr^\alpha$ for some $c>0$ and $\alpha\in(0,1]$. 

Class $W^{k,p}(\Omega)$ is the usual Sobolev space consisting of all $k$-times weakly differentiable functions on $\Omega$ whose derivatives of any order less than or equal to $k$ belong to $L^p(\Omega)$. The local version $W_{loc}^{k,p}(\Omega)$ is defined accordingly. By $W^{-1,2}(\Omega)$ we denote the dual space of $W_0^{1,2}(\Omega)$.

Given a (Sobolev) function $u$, by the nodal set of $f$ we indicate its zero-level set, $\{ u = 0 \}$. The notion of regular and singular part of the nodal set is given in Section \ref{section:reg} (see \eqref{eq:reg}) and respectively Section \ref{section:sing} (see \eqref{eq:sing}). 

Some of the basic elliptic regularity theory will be used without reference, e.g., local energy estimate (or Cacciopoli's inequality), local $L^\infty$-estimate, local $W^{1,p}$-estimate for PDEs with continuous coefficients, etc., and we refer to \cite{GT} for details. 

We shall intentionally distinguish the term ``function'' from ``mapping'', where the former will always be real-valued, while the latter can have values in different spaces, such as vectors, matrices or tensors. We shall also follow the summation convention for repeating indices, and the derivatives with multi-indices.

Given two square matrices $A, B \in \R^{n^2}$, by saying $A\leq B$ we shall indicate that eigenvalues of $A-B$ are all nonnegative. By $I$ we shall denote the identity matrix in $\R^{n^2}$, unless stated otherwise. By $e_i$, with $i\in\{1,\cdots,n\}$, we denote the standard $i$-th basis vector in $\R^n$. Given a square matrix $A$, we call $A$ elliptic, if $A \geq \lambda I$ for some positive constant $\lambda$. Given a function $v$ and a square matrix $A$, we call $v$ $A$-harmonic, if $\ddiv (A \nabla v) = 0$.

Let us list up the standing assumptions on coefficient matrices $a_+,a_-: \Omega\ra \R^{n^2}$, which will always be real-symmetric $(n\times n)$-dimensional matrix-valued mappings, as follows: 
\begin{enumerate}[(i)]
\item (Ellipticity) There is a constant $\lambda\in(0,1)$ such that
\begin{equation}\label{eq:a-ellip}
\lambda I \leq \inf_\Omega a_\pm \leq \sup_\Omega a_\pm \leq \frac{1}{\lambda} I.
\end{equation}
\item (Regularity) There is a Dini modulus of continuity $\omega$ such that $a_+,a_- \in (C^{0,\omega}(\Omega))^{n^2}$, and 
\begin{equation}\label{eq:a-Dini}
\sup_{\substack{x,y\in\Omega \\ 0<|x-y|\leq 1}} \frac{|a_\pm (x) - a_\pm (y) |}{\omega(|x-y|)} \leq 1. 
\end{equation} 
\item (Structure) There is a real-valued function $\kappa:\Omega\ra (0,\infty)$ such that 
\begin{equation}\label{eq:a-k}
a_- = \kappa a_+ \quad\text{on }\Omega. 
\end{equation} 
\end{enumerate}
Note that \eqref{eq:a-ellip} implies in \eqref{eq:a-k} that $\lambda^2 \leq \kappa \leq \lambda^{-2}$ on $\Omega$. 
Parameters $n$, $\lambda$ and $\omega$ will be fixed throughout the paper, and will often be called universal.

Given $f\in (L^2(\Omega))^n$, we shall say that $u\in W^{1,2}(\Omega)$ is a weak solution (or simply a solution) to \eqref{eq:main-gen}, if
\begin{equation*}
\int_\Omega (a_+ \nabla u^+ - a_- \nabla u^-)\cdot \nabla \phi \,dx = \int_\Omega f\cdot \nabla \phi\,dx,
\end{equation*} 
for any $\phi\in W_0^{1,2}(\Omega)$.


\section{Interior Regularity of Solution}\label{section:int-reg}

This section is devoted to interior regularity of weak solutions to \eqref{eq:main-gen}, provided that $a_+$, $a_-$ and $f$ are Dini continuous. Our main objective is to prove the following. 

\begin{theorem}\label{theorem:int-Lip}
Let $a_+,a_- \in (C^{0,\omega}(B_1))^{n^2}$ satisfy \eqref{eq:a-ellip}, \eqref{eq:a-Dini} and \eqref{eq:a-k}, and let $f\in (C^{0,\omega}(B_1))^n$ be given. Suppose that $u\in W^{1,2}(\Omega)$ is a weak solution to \eqref{eq:main-gen}. Then $u \in W^{1,\infty}(B_{1/2})$, and there is a unique mapping $Vu : B_{1/2}\ra \R^n$, defined as in \eqref{eq:Vu}, such that for any (finite) $p>n$, 
\begin{equation}\label{eq:int-C1-re}
\begin{split}
&\sup_{z\in \overline{B_{1/2}}}\sup_{ r \in (0,\frac{1}{4})} \left( \frac{1}{r^n\omega_1(r)^p} \int_{B_r(z)} |\nabla u^+ - \kappa(z) \nabla u^- - Vu(z)|^p\,dx\right)^{\frac{1}{p}}\\
&\leq C_p \left( \norm{u}_{L^2(B_1)} + \norm{f}_{C^{0,\omega}(B_1)}\right),
\end{split}
\end{equation}
with a positive constant $C_p$ determined by $n$, $\lambda$, $\omega$ and $p$ only, as well as a modulus of continuity $\omega_1$ determined solely by $\omega$. In particular, one has  
\begin{equation}\label{eq:int-Lip}
\norm{u}_{W^{1,\infty}(B_{1/2})} + [Vu]_{C^{0,\omega_1}(B_{1/2})} \leq C \left( \norm{u}_{L^2(B_1)} + \norm{f}_{C^{0,\omega}(B_1)}\right),
\end{equation} 
where $C$ is a positive constant depending only on $n$, $\lambda$ and $\omega$.
\end{theorem}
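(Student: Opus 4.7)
The plan is to analyse, at each fixed interior point $z$, the locally frozen, ``corrected'' function
\[
v_z := u^+ - \kappa(z) u^-,
\]
which has the effect of stitching the two phases across the nodal set. A direct computation using $a_- = \kappa a_+$ and the PDE \eqref{eq:main-gen} gives
\[
\ddiv(a_+ \nabla v_z) = \ddiv f + \ddiv\!\bigl((\kappa - \kappa(z))\, a_+\nabla u^-\bigr),
\]
and after freezing $a_+$ at $z$ this becomes
\[
\ddiv\!\bigl(a_+(z)\nabla v_z\bigr) = \ddiv F_z,\qquad F_z := f + (\kappa-\kappa(z))\,a_+\nabla u^- + \bigl(a_+ - a_+(z)\bigr)\nabla v_z.
\]
The point is that \eqref{eq:a-Dini} together with \eqref{eq:a-k} forces $|F_z(x)| \leq C\,\omega(|x-z|)\,(|f|_{C^{0,\omega}} + |\nabla u|)$ once we have an $L^\infty$-bound on $\nabla u$, so $v_z$ satisfies a constant-coefficient elliptic equation with a right-hand side of Dini decay.

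Given this reduction, the main step is a Campanato-type iteration with a decaying modulus. At dyadic scales $r_k = 2^{-k}r_0$, I would compare $v_z$ in $B_{r_k}(z)$ with the $a_+(z)$-harmonic replacement $h_k$ having the same boundary values; classical energy and interior gradient estimates for constant-coefficient harmonic functions give an affine approximation $\ell_k(x) = c_k + p_k\cdot(x-z)$ to $h_k$, and the error is controlled by $\|F_z\|_{L^p(B_{r_k}(z))}$. Iterating, one constructs vectors $p_k(z)$ satisfying
\[
\left(\frac{1}{r_k^n}\int_{B_{r_k}(z)} |\nabla v_z - p_k(z)|^p\,dx\right)^{1/p} \leq C_p M\, \omega_1(r_k),
\]
where $M := \|u\|_{L^2(B_1)} + \|f\|_{C^{0,\omega}(B_1)}$ and the factor $\omega_1$ arises from summing $\sum_{j\geq k} \omega(r_j)$ and $r_k\sum_{j<k} r_j^{-1}\omega(r_j)$ (hence precisely the Dini-doubled modulus $\omega_1$ defined in Section \ref{section:notation}). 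The Dini summability is exactly what makes $\{p_k(z)\}$ Cauchy, so one may define
\begin{equation}\label{eq:Vu}
Vu(z) := \lim_{k\to\infty} p_k(z),
\end{equation}
which satisfies the advertised $L^p$-averaged estimate \eqref{eq:int-C1-re}.

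The Lipschitz bound on $u$ then follows because $|Vu(z)|\leq CM$ for every $z$ and $|\nabla v_z|$ is controlled in $L^p$-average by $|Vu(z)|$ plus the small error, while the pointwise identity $|\nabla u| \leq C(\lambda)|\nabla v_z|$ on $\{u^+>0\}\cup\{u^-<0\}$ transfers this to $\nabla u$. The $\omega_1$-modulus estimate for $Vu$ is obtained by comparing two nearby points $z,w \in B_{1/2}$: the triangle inequality with an intermediate scale $r\simeq|z-w|$ gives
\[
|Vu(z)-Vu(w)| \leq |Vu(z) - p_{k(r)}(z)| + |p_{k(r)}(z) - p_{k(r)}(w)| + |p_{k(r)}(w) - Vu(w)|,
\]
where the outer terms are $\lesssim M\omega_1(r)$ by construction and the middle one is estimated by comparing the $L^p$-averages on a common ball $B_r(z)\cap B_r(w)$ and using that $\kappa$ has the same modulus $\omega$ as $a_\pm$, yielding the total bound $CM\omega_1(|z-w|)$.

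The main obstacle is the chicken-and-egg nature of $F_z$: the estimate on $F_z$ requires an $L^\infty$-bound on $\nabla u$, which is exactly what one is trying to prove. I would resolve this by a bootstrap: first use the De Giorgi--Nash--Moser theory to establish $u\in C^{0,\alpha}$ and $\nabla u \in L^p_{\mathrm{loc}}$ for all finite $p$, then run the iteration under the induction hypothesis that the $L^p$-average of $|\nabla v_z|$ on $B_r(z)$ is bounded by a constant $K$ independent of scale. The iteration will actually improve $K$ and close on itself, and once it closes one obtains a genuine $L^\infty$-bound, from which the full statements \eqref{eq:int-C1-re} and \eqref{eq:int-Lip} follow. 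The uniqueness of $Vu$ is automatic, being characterised by the $L^p$-Campanato average at $z$.
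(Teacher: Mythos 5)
Your proposal follows essentially the same route as the paper: correct $u$ to $v_z = u^+ - \kappa(z)u^-$, observe that $v_z$ solves a constant-coefficient equation $\ddiv(a_+(z)\nabla v_z) = \ddiv F_z$ with a Dini-decaying source, run a harmonic-replacement iteration at geometric scales to produce affine approximations whose slopes converge (by Dini summability) to $Vu(z)$ with error $\omega_1(r)$, and recover the $C^{0,\omega_1}$ modulus of $Vu$ by comparing two base points at the intermediate scale $r\simeq|z-w|$. This is precisely the content of Proposition \ref{proposition:v-C1} and Lemmas \ref{lemma:int-C1}--\ref{lemma:int-C1-more}.

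The one place where your argument is weaker than it needs to be is the ``chicken-and-egg'' discussion. There is in fact no circularity to bootstrap away at the $L^2$ level: since $\nabla u^+$ and $\nabla u^-$ have disjoint supports a.e., one has the pointwise algebraic inequality $|\nabla u| \le \max\{1,\kappa(z)^{-1}\}\,|\nabla v_z|$, so
\begin{equation*}
\norm{F_z}_{L^2(B_r(z))} \le C\,\omega(r)\bigl(\norm{\nabla v_z}_{L^2(B_r(z))} + r^{\frac n2}\bigr)
\end{equation*}
holds with no $L^\infty$ input on $\nabla u$ whatsoever. This is exactly why the hypothesis \eqref{eq:v-pde} of Proposition \ref{proposition:v-C1} is phrased with $\norm{\Delta v}_{W^{-1,2}(B_r)}$ controlled \emph{relative to} $\norm{\nabla v}_{L^2(B_r)}$: the iteration then tracks $\norm{\nabla(v-l_k)}_{L^2(B_{\mu^k})}$ and $|\nabla l_k|$ simultaneously and is self-contained. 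Your proposed induction hypothesis --- a scale-independent bound $K$ on the $L^p$-average of $|\nabla v_z|$ --- is essentially the Morrey form of the Lipschitz estimate you are trying to prove, and ``the iteration improves $K$ and closes on itself'' is an assertion rather than an argument; to make it rigorous you would end up splitting off the affine part at each scale exactly as in \eqref{eq:v-lk-L2}--\eqref{eq:lk}. Likewise, the upgrade from $L^2$ to $L^\infty$/$L^p$ averages should come \emph{after} the $W^{1,\infty}$ bound is extracted from the $L^2$-Campanato estimate at Lebesgue points (so that $F_z\in L^\infty$ and the classical interior $L^\infty$- and $W^{1,p}$-estimates apply), not be built into the first pass of the iteration. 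With these two points repaired your outline matches the paper's proof.
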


This theorem will follow from a point-wise $C^1$-approximation of $v_z = u^+ - \kappa(z) u^-$, that is uniform for each interior point $z\in\Omega$, where $\kappa(z)$ is the real number for which we have $a_-(z) = \kappa(z) a_+(z)$. As in the collaboration \cite{KLS2} with Lee, Shahgholian and the author, here we shall also transform our PDE \eqref{eq:main-gen}, into
\begin{equation*}
\Delta v_z = \ddiv F_z,
\end{equation*}
for some $F_z$ satisfying
\begin{equation*}
\norm{F_z}_{L^2(B_r(z))} = O(1) \omega(r)\left( \norm{\nabla u}_{L^2(B_r(z))} + r^{\frac{n}{2}}\right). 
\end{equation*} 
However, the argument presented in \cite{KLS2} is restricted to the case of H\"older modulus of continuity $\omega$, since it estimates $\norm{\nabla u}_{L^2(B_r(z))}$ prior to the $C^1$-approximation, and as a result it obtains $\norm{\nabla u}_{L^2(B_r(z))} = O(r^{\frac{n}{2} - \e})$, for all small $\e>0$, only. Such an estimate is sufficient to work with a H\"older modulus of continuity, say $C^\alpha$, because it still gives us $\norm{F_z}_{L^2(B_r(z))} = O(r^{\frac{n}{2} + \alpha - \e})$. Now if we choose $\e$ sufficiently small such that $\e < \alpha$, then an iteration argument will lead us to $\norm{v_z - l_z}_{L^2(B_r(z))} = O(r^{\frac{n}{2} + 1 + \alpha - \e})$, as desired. 

Nevertheless, this argument would fail if $\omega$ is only Dini continuous, yet not H\"older, since Dini modulus of continuity allows $r^\e = o(\omega(r))$, for any $\e>0$. Here we fill this gap, by employing a new approximation lemma adopted to our problem \eqref{eq:main-gen}. 

\begin{proposition}\label{proposition:v-C1}
Given a Dini modulus of continuity $\omega$, there exist constants $\rho\in(0,\frac{1}{4})$ and $C>0$, depending only on $n$ and $\omega$, such that if $v\in W^{1,2}(B_1)$ satisfies
\begin{equation}\label{eq:v-pde}
\max\left\{ \norm{v}_{W^{1,2}(B_1)}, \sup_{r\in(0,1)} \frac{\norm{\Delta v}_{W^{-1,2}(B_r)} }{\omega(\rho r) ( \norm{\nabla v}_{L^2(B_r)}+ r^{\frac{n}{2}}) } \right\}  \leq 1,
\end{equation} 
then there is an affine function $l$ such that 
\begin{equation}\label{eq:v-C1}
|l(0)| + |\nabla l| + \sup_{r\in(0,1)} \frac{\norm{v - l}_{L^2(B_r)} + r\norm{\nabla (v-l)}_{L^2(B_r)}}{r^{\frac{n}{2} + 1}\omega_1(r)}\leq C, 
\end{equation} 
where $\omega_1$ is a modulus of continuity determined by $\omega$ alone. 
\end{proposition}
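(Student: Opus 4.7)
The plan is to use a Campanato-type iteration adapted to Dini moduli. I will fix $\theta \in (0, 1/4)$ small (depending only on $n$), set $r_k = \theta^k$, and inductively construct affine functions $l_k$ starting from $l_0 = 0$: let $\tilde h_k$ be the harmonic function on $B_{r_k}$ with trace $v - l_k$ on $\partial B_{r_k}$, and set $l_{k+1} = l_k + T_k$, where $T_k(x) = \tilde h_k(0) + \nabla \tilde h_k(0) \cdot x$. The two key ingredients are the harmonic comparison (energy estimate plus Poincar\'e),
\[
\|v - l_k - \tilde h_k\|_{L^2(B_{r_k})} \le C r_k \|\Delta v\|_{W^{-1,2}(B_{r_k})} \le C r_k\,\omega(\rho r_k) M_k,
\]
with $M_k = \|\nabla v\|_{L^2(B_{r_k})} + r_k^{n/2}$, and the interior gradient estimate for harmonic functions,
\[
\|\tilde h_k - T_k\|_{L^2(B_{\theta r_k})} \le C \theta^{n/2+2} \|\tilde h_k\|_{L^2(B_{r_k})}.
\]
Writing $\alpha_k = r_k^{-n/2-1} \|v - l_k\|_{L^2(B_{r_k})}$, these combine into the Campanato recurrence $\alpha_{k+1} \le C_0 \theta \alpha_k + C_0 \theta^{-n/2-1} \omega(\rho r_k) M_k r_k^{-n/2}$.

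I will run the induction simultaneously on three quantities: (a) $\alpha_k$ uniformly bounded, (b) $M_k \le C_2 r_k^{n/2}$, and (c) $|l_k(0)| + |\nabla l_k| \le C_3$. For (b), Cacciopoli applied to $v - l_k$ bounds $\|\nabla(v - l_k)\|_{L^2(B_{r_k/2})}$ by a constant multiple of $r_k^{n/2}$ (using (a) and the hypothesis on $\Delta v$); combined with (c) and the inclusion $B_{r_{k+1}} \subset B_{r_k/2}$ (valid since $\theta < 1/2$), this yields the bound on $M_{k+1}$. For (c), harmonic function estimates yield $|T_k(0)| + r_k|\nabla T_k| \le C r_k(\alpha_k + \omega(\rho r_k))$, and summing gives $l_k = \sum_{j<k} T_j$ uniformly bounded in the required norm. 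The parameters will be chosen in order: first $\theta = \theta(n)$ so that $C_0 \theta \le 1/2$, and then $\rho = \rho(n,\omega)$ small enough that the tail $\sum_{k \ge 0} \omega(\rho \theta^k)$ (finite by the Dini condition) is small, so that all inductive constants stay universal.

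Unrolling the recurrence gives $\alpha_k \le C \theta^k + C \sum_{j<k} \theta^{k-1-j} \omega(\rho \theta^j)$. A direct dyadic comparison with the two integrals in the definition of $\omega_1$ turns this into the pointwise bound $\alpha_k \le C \omega_1(r_k)$; separately, interchanging the order of summation in the same double sum produces $\sum_k \alpha_k \le C(1 + \sum_j \omega(\rho \theta^j)) < \infty$, which forces $l_k$ to converge in $C^1$ to an affine $l$ with $|l(0)| + |\nabla l| \le C$. The passage from dyadic to continuous scales uses that $\omega_1(r)/r$ is non-increasing (a property I would verify by differentiating the formula for $\omega_1$), converting $\alpha_k \le C \omega_1(r_k)$ into $\|v - l\|_{L^2(B_r)} \le C r^{n/2+1} \omega_1(r)$ for every $r \in (0, 1)$; the gradient part of the estimate follows from one more Cacciopoli inequality applied to $v - l$. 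The main obstacle will be closing (b) without losing (a) or (c), since Cacciopoli couples $M_k$ with $|\nabla l_k|$; the resolution is the clean separation of the geometric parameter $\theta$ from the analytic parameter $\rho$, so that the linear contraction in $\alpha_k$ and the control of the total Dini mass can be tuned independently, and the induction closes uniformly without requiring any condition on $\omega$ stronger than Dini.
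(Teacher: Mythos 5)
Your overall architecture coincides with the paper's: iterated harmonic replacement at geometric scales, affine correction by the first-order Taylor polynomial of the replacement, parameters chosen in the order ``geometric factor first, then $\rho$ small via the Dini tail $\sum_k\omega(\rho\theta^k)$.'' The bookkeeping differences (you control $\norm{\nabla v}_{L^2(B_{r_k})}$ by Cacciopoli, whereas the paper carries $\norm{\nabla(v-l_k)}_{L^2(B_{\mu^k})}$ inside the induction hypothesis, obtaining it for free from the energy identity $\norm{\nabla(v_m-h)}_{L^2(B_1)}\leq\norm{\Delta v_m}_{W^{-1,2}(B_1)}$ of the replacement) are harmless, and your resolution of the apparent circularity among the constants --- every occurrence of $C_2$ on the right comes multiplied by $\psi(\rho)=\sum_k\omega(\rho\theta^k)$, which can be made small --- is sound.

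The gap is in the step ``a direct dyadic comparison \dots turns this into $\alpha_k\leq C\omega_1(r_k)$.'' Your recurrence contracts only at rate $C_0\theta$ with $C_0=C_0(n)>1$ (the constant in the second-order Taylor remainder of the harmonic replacement), so unrolling gives $\alpha_k\lesssim\sum_{j<k}(C_0\theta)^{k-1-j}\omega(\rho\theta^j)$, whereas $\omega_1(\theta^k)$ controls only $\sum_{j<k}\theta^{k-1-j}\omega(\rho\theta^j)$; the two differ by the factor $C_0^{k-1-j}$ in each term. The only lower bound available for a general Dini modulus is $\omega(\theta s)\geq\theta\,\omega(s)$ (from $r^{-1}\omega(r)$ non-increasing), so the forcing term can drop by the full factor $\theta$ between consecutive scales and the geometric domination you need ($C_0\theta\leq\tfrac12\,\omega(\rho\theta^{j+1})/\omega(\rho\theta^j)$) fails. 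Concretely, for $\omega(r)=r^\alpha$ your comparison requires $C_0\theta^{1-\alpha}<1$, a condition that cannot be met with $\theta=\theta(n)$ uniformly in $\alpha$ and degenerates as $\alpha\to1$; and one can construct Dini moduli that are linear over long ranges of scales (with the Dini integral still finite) for which $\sum_j(C_0\theta)^{k-1-j}\omega(\rho\theta^j)/\omega_1(\theta^k)\to\infty$ for \emph{every} fixed $\theta$. The paper avoids accumulating errors across scales altogether: it couples the geometric factor to $\omega$ by requiring $4\bar c\,\omega(1)\mu\leq\omega(\mu)$ (so the Taylor error $\bar c\mu^{\frac n2+2}$ is absorbed directly into the target $\mu^{\frac n2+1}\omega(\mu)$), takes the decay $\omega(\rho\mu^k)/\omega(\rho)$ itself as the induction hypothesis, and closes each step exactly using the submultiplicativity $\omega(1)\,\omega(\rho\mu^{m+1})\geq\omega(\rho\mu^m)\,\omega(\mu)$; the modulus $\omega_1$ then enters only through the tail $\sum_{j>k}|\nabla l_{j+1}-\nabla l_j|\lesssim\sum_{j>k}\omega(\rho\mu^j)$ of the affine corrections. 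You would need to replace your unrolling step by this (or an equivalent) mechanism for the induction to produce the stated rate for all Dini moduli.
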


\begin{remark}\label{remark:v-C1}
By a more thorough analysis, one can prove that $\omega_1$ is given by 
\begin{equation*}
\omega_1 (r) = \int_0^r\frac{\omega(s)}{s}\,ds + r\int_r^1 \frac{\omega(s)}{s^2}\,ds.
\end{equation*}
Hence, the Dini condition on $\omega$ is essential to have $\omega_1$ well-defined and $\lim_{r\ra 0 }\omega_1(r) = 0$. 
\end{remark}

\begin{proof}[Proof of Proposition \ref{proposition:v-C1}]
Let $\bar{c}>1$ be a constant to be determined later, by $n$ and $\omega$. Choose $\mu\in(0,\frac{1}{2})$ by a sufficiently small real satisfying 
\begin{equation}\label{eq:mu}
4\bar{c} \omega(1) \mu \leq \omega(\mu),
\end{equation}
and accordingly select a small real $\rho\in(0,\frac{1}{2})$ for which 
\begin{equation}\label{eq:rho}
4\bar{c} \omega(1) \psi(\rho) \leq \mu^{\frac{n}{2} + 1}\omega(\mu)\quad\text{with}\quad \psi(\rho) = \sum_{k=0}^\infty \omega(\rho \mu^k).
\end{equation} 
The inequality \eqref{eq:rho} is valid for any small $\rho$, once $\mu$ is fixed, due to the Dini condition on $\omega$. In what follows, let us denote by $\beta_n$ the volume of the unit ball. 

We are going to construct a sequence $\{l_k\}_{k=1}^\infty$ of affine functions that for each $k\in\N\cup\{0\}$, 
\begin{equation}\label{eq:v-lk-L2}
 \frac{\norm{v - l_k}_{L^2(B_{\mu^k})} }{\mu^k}  +  \norm{\nabla (v - l_k)}_{L^2(B_{\mu^k})} \leq \frac{\mu^{\frac{kn}{2}}\omega(\rho \mu^k)}{\omega(\rho)}, \quad\text{and}
\end{equation}
\begin{equation}\label{eq:lk}
 \frac{ |l_k (0) - l_{k-1} (0) |}{\mu^k} +  |\nabla l_k - \nabla l_{k-1} | \leq c_0\frac{\omega(\rho\mu^k)}{\omega(\rho)},
\end{equation}  
with $l_{-1} = 0$, where $c_0>0$ depends only on $n$.

Set $l_0 = 0$. Then \eqref{eq:lk} is trivial for $k=0$, and \eqref{eq:v-lk-L2} for $k=0$ also follows immediately from the assumption $\norm{v}_{W^{1,2}(B_1)}\leq 1$. 

Let us fix $m\in\N\cup\{0\}$, and suppose that induction hypotheses \eqref{eq:v-lk-L2} and \eqref{eq:lk} are satisfied with certain affine function $l_k$ for each $0\leq k\leq m$.

Define $v_m :B_1\ra \R$ by  
\begin{equation}\label{eq:vm-um}
v_m (x) = \frac{\omega(\rho)}{\mu^m \omega(\rho \mu^m)} (v - l_m)(\mu^m x),
\end{equation}
so that $v_m \in W^{1,2}(B_1)$ and $\norm{v_m}_{W^{1,2}(B_1)} \leq 1$, due to the induction hypothesis \eqref{eq:v-lk-L2} for $k=m$. Moreover, since $\nabla l_m$ is a constant vector, we deduce from \eqref{eq:v-pde} that 
\begin{equation}\label{eq:vm-pde}
\begin{split}
\norm{\Delta v_m}_{W^{-1,2}(B_1)} &= \frac{ \omega(\rho)}{\mu^{mn/2}\omega(\rho\mu^m)} \norm{\Delta v}_{W^{-1,2}(B_{\mu^m})} \\
&\leq \omega(\rho)\left( \frac{ \norm{\nabla v}_{L^2(B_{\mu^m})}}{\mu^{mn/2}} + 1\right) \\
&\leq \omega(\rho\mu^m) \norm{\nabla v_m}_{L^2(B_1)} + \omega(\rho) \left( \norm{\nabla l_m}_{L^2(B_{\mu^m})} + 1  \right)\\
&\leq \left( c_0\sqrt{\beta_n} + 2 \right)\psi(\rho),
\end{split} 
\end{equation} 
where in the derivation of the last inequality we used $\norm{\nabla v_m}_{L^2(B_1)} \leq 1$, $l_0 = 0$ and \eqref{eq:lk} for all $k\leq m$, which yields 
\begin{equation}
|\nabla l_m|\leq |\nabla l_0| + \sum_{k=1}^m |\nabla l_k - \nabla l_{k-1}|\leq \frac{c_0}{\omega(\rho)} \sum_{k=1}^\infty\omega(\rho\mu^k) \leq c_0\frac{\psi(\rho)}{\omega(\rho)},
\end{equation} 
as well as $\omega(\rho) < \psi(\rho)$. 

Now let $h\in W^{1,2}(B_1)$ be the harmonic function on $B_1$ subject to boundary condition $h - v_m \in W_0^{1,2}(B_1)$. Then it follows from $\int_{B_1} \nabla h\cdot \nabla (v_m - h)\,dx = 0$ and \eqref{eq:vm-pde} that 
\begin{equation}\label{eq:Dvm-Dh-L2}
\norm{\nabla (v_m - h)}_{L^2(B_1)} \leq \norm{\Delta v_m }_{W^{-1,2}(B_1)} \leq \left(c_0 \sqrt{\beta_n} + 2 \right) \psi(\rho). 
\end{equation} 
Thus, the Poincar\'e inequality implies
\begin{equation}\label{eq:vm-h-L2}
\norm{v_m - h}_{L^2(B_1)} \leq c_1\left(c_0 \sqrt{\beta_n} + 2 \right) \psi(\rho),
\end{equation}  
where $c_1>0$ depends only on $n$. 

On the other hand, as $h$ being a harmonic function on $B_1$, we deduce from \eqref{eq:vm-h-L2} and \eqref{eq:v-lk-L2} (which yields $\norm{v_m}_{L^2(B_1)} \leq 1$) that 
\begin{equation}\label{eq:h-W2inf}
\norm{h}_{W^{2,\infty}(B_{1/2})}  \leq c_2 \norm{h}_{L^2(B_1)} \leq 2c_2,
\end{equation}
where we used the smallness condition \eqref{eq:rho} for $\rho$ that gives $c_1(c_0\beta_n + 2)\psi(\rho)\leq 1$ (as we will choose $\bar{c}$ later to be larger than $c_1(c_0\sqrt{\beta_n} + 2)$). Here $c_2>0$ is also a constant depending only on $n$. Thus, denoting by $q_m$ the affine part of $h$, i.e., 
\begin{equation*}
q_m (x) = h(0) + \nabla h(0)\cdot x,
\end{equation*}
we can deduce from \eqref{eq:h-W2inf} and the Taylor expansion that with $\mu\in(0,\frac{1}{2})$, 
\begin{equation}\label{eq:h-qm}
 \norm{ h - q_m }_{L^\infty(B_\mu)} + \mu \norm{\nabla (h-q_m)}_{L^\infty(B_\mu)}  \leq c_3 \mu^2,
\end{equation}
for certain constant $c_3>0$ depending only on $n$. Thus, by \eqref{eq:mu}, \eqref{eq:rho} with $\bar{c} =\max\{ c_1(c_0\sqrt{\beta_n} + 2), c_3\sqrt{\beta_n}\}$, \eqref{eq:vm-h-L2} and \eqref{eq:h-qm}, we observe that
\begin{equation}\label{eq:vm-qm-L2}
\norm{v_m-q_m}_{L^2(B_\mu)} \leq \bar{c}\left( \psi(\rho) + \mu^{\frac{n}{2} + 2} \right) \leq \frac{\mu^{\frac{n}{2} + 1} \omega(\mu) }{2\omega(1)},
\end{equation} 
and similarly, by \eqref{eq:Dvm-Dh-L2} instead of \eqref{eq:vm-h-L2}, 
\begin{equation}\label{eq:Dvm-Dqm-L2}
\norm{\nabla (v_m - q_m)}_{L^2(B_\mu)} \leq \frac{\mu^{\frac{n}{2}}\omega(\mu)}{2\omega(1)}. 
\end{equation} 

To this end, let us define 
\begin{equation}\label{eq:lm}
l_{m+1} = l_m + \frac{\mu^m \omega(\rho \mu^m)}{\omega(\rho)} q_m \left( \frac{\cdot}{\mu^m} \right).
\end{equation}
Since $\omega$ is a Dini modulus of continuity, $\omega(1)\omega(\rho r) \geq \omega(\rho) \omega(r)$ for any $r\in(0,1)$. Thus, we can verify from \eqref{eq:h-W2inf} the last induction hypothesis \eqref{eq:lk} for $k = m+1$, with $c_0 = 2c_2$, and the first hypothesis \eqref{eq:vm-qm-L2} for $k = m+1$ from \eqref{eq:vm-qm-L2} and \eqref{eq:Dvm-Dqm-L2}. This finishes the proof.
\end{proof} 

As a corollary, we obtain a uniform point-wise $C^1$-approximation for $u^+ - \kappa(z) u^-$ at each interior point $z$, when $u$ is a weak solution to \eqref{eq:main-gen}. Here we obtain an estimate in a stronger space, i.e., $L^\infty$ for the function and $L^p$ for the gradient, due to the Dini continuity of our source term, $f$.

\begin{lemma}\label{lemma:int-C1}
Under the setting of Theorem \ref{theorem:int-Lip}, for every $z\in \overline{B_{1/2}}$, there exists a (unique) affine function $l_z$ such that for any (finite) $p>n$, 
\begin{equation}\label{eq:phiz-lz-W12}
\begin{split}
& |l_z(z)| + |\nabla l_z| + \sup_{r\in (0,\frac{1}{4})} \frac{\norm{v_z - l_z}_{L^\infty(B_r(z))}}{r\omega_1 (r)} \\
&\quad + \sup_{r\in (0,\frac{1}{4})}\frac{\norm{\nabla (v_z - l_z)}_{L^p(B_r(z))}}{r^{\frac{n}{p}}\omega_1(r)} \\
& \leq C_p\left( \norm{u}_{L^2(B_1)} + \norm{f}_{C^{0,\omega}(B_1)} \right),
\end{split}
\end{equation} 
where $v_z = u^+ - \kappa(z) u^-$, $C_p$ is a positive constant depending only on $n$, $\lambda$, $\omega$ and $p$, and $\omega_1$ is a modulus of continuity determined by $\omega$ only. 
\end{lemma}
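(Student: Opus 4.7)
The plan is to freeze the coefficients at the point $z$ by a linear change of variables, reduce the equation for $v_z = u^+ - \kappa(z) u^-$ to a perturbation of the Laplace equation, apply Proposition \ref{proposition:v-C1} to extract the affine approximation $l_z$ in $L^2$, and then upgrade the $L^2$ output to $L^\infty$ for $v_z - l_z$ and $L^p$ for its gradient using Calderón--Zygmund theory for the Poisson equation with divergence-type right hand side.

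Fix $z \in \overline{B_{1/2}}$, set $T_z = a_+(z)^{-1/2}$, $\tilde u(y) = u(z + T_z y)$, and $\tilde v_z = \tilde u^+ - \kappa(z)\tilde u^-$. A direct computation based on \eqref{eq:main-gen} and \eqref{eq:a-k} shows that in the new coordinates
\begin{equation*}
\Delta \tilde v_z = \ddiv \tilde F_z, \qquad \tilde F_z = \tilde f - (\tilde a_+ - I)\nabla\tilde u^+ + (\tilde a_- - \kappa(z) I)\nabla\tilde u^-,
\end{equation*}
with $\tilde a_+(0) = I$ and $\tilde a_-(0) = \kappa(z) I$. Thanks to \eqref{eq:a-Dini}, the Dini continuity of $\kappa$ inherited from that of $a_\pm$ via \eqref{eq:a-k} and \eqref{eq:a-ellip}, and the Dini continuity of $f$, one obtains
\begin{equation*}
\|\tilde F_z\|_{L^2(B_r)} \leq C\,\omega(r)\bigl(\|\nabla\tilde u\|_{L^2(B_r)} + r^{n/2} M\bigr), \qquad M := \|u\|_{L^2(B_1)} + \|f\|_{C^{0,\omega}(B_1)}.
\end{equation*}
After rescaling $\tilde v_z/M$ and restricting to a ball of suitably small universal radius, the hypothesis \eqref{eq:v-pde} of Proposition \ref{proposition:v-C1} is satisfied. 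The proposition then delivers an affine $\tilde l_z$ obeying $|\tilde l_z(0)| + |\nabla \tilde l_z| \leq C M$ together with
\begin{equation*}
\|\tilde v_z - \tilde l_z\|_{L^2(B_r)} + r\,\|\nabla(\tilde v_z - \tilde l_z)\|_{L^2(B_r)} \leq C M r^{n/2 + 1}\omega_1(r)
\end{equation*}
at all scales $r$ up to a universal radius, and pulling back via $T_z$ produces the affine $l_z$ in the original coordinates with matching $L^2$ estimates centred at $z$.

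To upgrade these $L^2$ bounds to the claimed $L^\infty$ and $L^p$ estimates with the same Dini rate $\omega_1$, let $w := \tilde v_z - \tilde l_z$, so that $\Delta w = \ddiv \tilde F_z$. The $L^2$-gradient control of $w$ at every dyadic scale combined with $|\nabla\tilde l_z| \leq CM$ places $\nabla \tilde v_z$ (and therefore $\nabla \tilde u$, by the sign structure of $u^\pm$ and the ellipticity of $\kappa(z)$) into a Campanato space with vanishing modulus, which yields the local bound $\|\nabla\tilde u\|_{L^\infty} \leq CM$ and hence $\|\tilde F_z\|_{L^p(B_r)} \leq C_p\,\omega(r)\, r^{n/p}M$. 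The Calderón--Zygmund $W^{1,p}$-estimate for the Poisson equation, applied to $w$ at dyadic scales and paired with the previously obtained $L^2$-bound on $w$, then produces the $L^p$-gradient estimate with the desired rate $r^{n/p}\omega_1(r)$, because $\omega_1$ enters multiplicatively. Finally Morrey's embedding $W^{1,p} \hookrightarrow C^{0, 1 - n/p}$ for $p > n$ converts this gradient control into the $L^\infty$ estimate on $w$ with rate $r\,\omega_1(r)$, and undoing the change of coordinates gives \eqref{eq:phiz-lz-W12}. Uniqueness of $l_z$ is read off from the $r \searrow 0$ asymptotics in this estimate.

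The main obstacle is the apparent circularity in the last step: controlling $\tilde F_z$ in $L^p$ requires a local $L^\infty$-bound on $\nabla\tilde u$, which is close to the very statement we aim to prove. The resolution is the dyadic bootstrap just sketched, in which the $L^2$-gradient output of Proposition \ref{proposition:v-C1} alone is enough to trigger a Campanato argument that makes $\nabla u$ locally bounded, after which the Calderón--Zygmund estimate closes with the sharp Dini rate $\omega_1$. Once this passage is made rigorous, the remaining steps are routine perturbation and scaling.
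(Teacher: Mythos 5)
Your proposal is correct and follows essentially the same route as the paper: freeze the coefficients at $z$, write $\Delta v_z=\ddiv F_z$ with the Dini-type $L^2$ bound on $F_z$, invoke Proposition \ref{proposition:v-C1} for the $L^2$-level affine approximation, use the fact that this holds at \emph{every} centre $z$ to conclude $\nabla u\in L^\infty$ (the paper does this via Lebesgue points of $\nabla u$, which is the same Campanato-type step you describe and correctly identify as the resolution of the apparent circularity), and then upgrade with classical $L^\infty$/$W^{1,p}$ estimates for the frozen-coefficient equation. The only cosmetic difference is your use of Morrey's embedding in the last step where the paper applies the interior $L^\infty$-estimate directly; this changes nothing of substance.
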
 

\begin{remark}\label{remark:int-C1}
Unlike in \cite{KLS2}, we obtain this estimate for every $z\in\overline{B_{1/2}}$, regardless of whether $z\in \{u=0\}$ or not. This observation will be used significantly later in Section \ref{section:double} and Section \ref{section:nodal}. 
\end{remark}

\begin{proof}
Given an arbitrary point $z\in \overline{B_{3/4}}$, function $v_z = u^+ - \kappa(z) u^-$ satisfies
\begin{equation}\label{eq:vz-pde}
\ddiv (a_+(z) \nabla v_z) = \ddiv F_z,
\end{equation} 
in the weak sense, with $F_z: B_1\ra\R^n$ defined by 
\begin{equation}\label{eq:Fz}
F_z = (a_+(z) - a_+)\nabla u^+ - (a_-(z) - a_-) \nabla u^- + (f- f(z)). 
\end{equation} 
Thus, denoting by $M$ the $C^{0,\omega}$-norm of $f$, we can deduce from \eqref{eq:a-Dini} that 
\begin{equation*}
\begin{split} 
\norm{F_z}_{L^2(B_r(z))} &\leq (1 + M)\omega(r) \left(  \norm{ \nabla u}_{L^2(B_r(z))} + r^{\frac{n}{2}} \right) \\
& \leq \lambda^{-2} ( 1 + M) \omega(r) \left(  \norm{ \nabla v_z}_{L^2(B_r(z))} + r^{\frac{n}{2}} \right), 
\end{split} 
\end{equation*} 
for any $r\in (0,\frac{1}{4})$, where in the derivation of the second inequality we used $\norm{\nabla u}_{L^2(B_r(z))} \leq \max\{1,\kappa(z)^{-1}\}\norm{\nabla v_z}_{L^2(B_r(z))}$ and $\lambda^2 \leq \kappa(z) \leq \lambda^{-2}$, which can be deduced from \eqref{eq:a-ellip} and \eqref{eq:a-k}. Hence, a change of coordinate system (so that $a_+(z)$ is transformed to $I$) followed by a suitable scaling argument allows us to apply Proposition \ref{proposition:v-C1} and deduce that there exists a unique affine function, $l_z$, for which
\begin{equation}\label{eq:phiz-lz-W12-re}
\begin{split}
& |l_z(z)| + |\nabla l_z| + \sup_{r\in (0,\frac{1}{8})} \frac{\norm{v_z - l_z}_{L^2(B_r(z))} +r \norm{\nabla (v_z - l_z)}_{L^2(B_r(z))} }{r^{\frac{n}{2} + 1} \omega_1 (r)} \\
& \leq C_0\left( \norm{u}_{L^2(B_1)} + \norm{f}_{C^{0,\omega}(B_1)}\right),
\end{split}
\end{equation}
where $C_0>0$ depends only on $n$, $\lambda$ and $\omega$. We remark that hypothesis $\norm{v}_{W^{1,2}(B_1)} \leq 1$ in the proposition can be verified from the local energy estimate of $u$, hence of $v_z$, due to the uniform ellipticity of the operator, $a_+\chi_{\{u>0\}} + a_- \chi_{\{u \leq 0\}}$, of \eqref{eq:main-gen}. 

Note that the above estimate is in $L^2$-norm. In order to improve this norm, we need a bootstrap argument, and for this purpose let us prove that $u\in W^{1,\infty}(B_{3/4})$ (hence, the first part of Theorem \ref{theorem:int-Lip} is established). Combining \eqref{eq:phiz-lz-W12-re} with inequality $\lambda^2 \leq \kappa(z) \leq \lambda^{-2}$, we observe that
\begin{equation*}
\sup_{r\in(0,\frac{1}{4})} \left(\frac{1}{r^n} \int_{B_r(z)} (u^2 + |\nabla u|^2)\,dx\right)^{\frac{1}{2}}\leq C_1\left( \norm{u}_{L^2(B_1)} + \norm{f}_{C^{0,\omega}(B_1)}\right),
\end{equation*}
where $C_1>0$ depends on $n$, $\lambda$ and $\omega$ only. Taking $r\ra 0$ yields, and assuming that $z$ is a Lebesgue point of both $u$ and $\nabla u$, we obtain  
\begin{equation}\label{eq:u-Du-Linf}
|u(z)| + |\nabla u(z)| \leq C_1( 1 + M).
\end{equation}
Since $u\in W^{1,2}(B_1)$, we conclude that \eqref{eq:u-Du-Linf} holds a.e. in $B_{3/4}$. 

Now we can go back to the PDE \eqref{eq:vz-pde} for $v_z$ and observe that
\begin{equation*}
\ddiv (a_+ (z) \nabla (v_z - l_z)) = \ddiv F_z,
\end{equation*} 
with $F_z$ being the same as in \eqref{eq:Fz}. Now that $u\in W^{1,\infty}(B_{3/4})$, we have $F_z \in (L^\infty(B_{3/4}))^n$. Therefore, for every $z\in \overline{B_{1/2}}$, we can adopt the interior $L^\infty$-estimate and $W^{1,p}$-estimate, for any $p>n$, from the classical regularity theory and deduce that for any $r\in(0,\frac{1}{8})$, 
\begin{equation*}
\begin{split}
& \norm{v_z - l_z}_{L^\infty(B_r(z))} + r^{1-\frac{n}{p}} \norm{\nabla (v_z - l_z)}_{L^p(B_r(z))}\\
& \leq C_p \left( r^{-\frac{n}{2}} \norm{v_z - l_z}_{L^2(B_{2r}(z))} + r \norm{F_z}_{L^\infty(B_{2r}(z))} \right) \\
& \leq \tilde{C}_pr\omega_1(r) \left( \norm{u}_{L^2(B_1)} + \norm{f}_{C^{0,\omega}(B_1)}\right),
\end{split}
\end{equation*}
where $C_p$ and $\tilde{C}_p$ depend at most on $n$, $\lambda$, $\omega$ and $p$. This finishes the proof of \eqref{eq:phiz-lz-W12}. 
\end{proof} 

Note that the affine mapping, $l_z$, above is uniquely determined for every $z$, and hence $z\mapsto \nabla l_z$ becomes a well-defined vector-valued mapping. The next lemma asserts that this mapping is uniformly continuous. 

\begin{lemma}\label{lemma:int-C1-more}
Under the setting of Lemma \ref{lemma:int-C1}, mapping $z\mapsto \nabla l_z$ belongs to class $(C^{0,\omega_1}(B_{1/2}))^n$, and for any pair $(z,z')$ of distinct points in $B_{1/2}$, 
\begin{equation}\label{eq:Dlz-C}
\frac{|\nabla l_z - \nabla l_{z'}|}{\omega_1(|z-z'|)} \leq C\left( \norm{u}_{L^2(B_1)} + \norm{f}_{C^{0,\omega}(B_1)} \right).
\end{equation}  
\end{lemma}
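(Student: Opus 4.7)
My plan is to fix distinct points $z,z'\in B_{1/2}$, set $r=|z-z'|$, and extract $\nabla l_z - \nabla l_{z'}$ from a common ball by combining the $L^p$-gradient estimate of Lemma \ref{lemma:int-C1} at $z$ and at $z'$ with the structural identity
\begin{equation*}
\nabla v_z - \nabla v_{z'} = (\kappa(z') - \kappa(z))\,\nabla u^- \quad \text{a.e. in } B_{1/2}.
\end{equation*}
Writing $M := \norm{u}_{L^2(B_1)} + \norm{f}_{C^{0,\omega}(B_1)}$, the relation $a_- = \kappa a_+$ together with \eqref{eq:a-ellip} and \eqref{eq:a-Dini} delivers (via, e.g., the ratio of traces) $|\kappa(z) - \kappa(z')| \leq C\omega(r)$, while $\norm{\nabla u}_{L^\infty(B_{3/4})} \leq CM$ is already established within the proof of Lemma \ref{lemma:int-C1} using Theorem \ref{theorem:int-Lip}. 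Hence $|\nabla v_z - \nabla v_{z'}| \leq CM\omega(r)$ pointwise a.e.

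I would first dispose of the range $r \geq r_0$, for some small universal $r_0 < 1/8$, via the trivial bound $|\nabla l_z| + |\nabla l_{z'}| \leq CM$ from \eqref{eq:phiz-lz-W12} and the fact that $\omega_1(r) \geq \omega_1(r_0) > 0$. For $r < r_0$, I fix $p > n$, set $B := B_r(z)$ (so that $B \subset B_{2r}(z')$), and exploit the fact that $\nabla l_z$ and $\nabla l_{z'}$ are constant vectors to obtain, by Minkowski on the $L^p$-average over $B$,
\begin{equation*}
|\nabla l_z - \nabla l_{z'}| \leq I_1 + I_2 + I_3,
\end{equation*}
where $I_1,I_2,I_3$ are the $L^p$-averages on $B$ of $|\nabla v_z - \nabla l_z|$, $|\nabla v_{z'} - \nabla l_{z'}|$, and $|\nabla v_z - \nabla v_{z'}|$, respectively. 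Then $I_1 \leq CM\omega_1(r)$ from Lemma \ref{lemma:int-C1} at $z$; $I_2 \leq CM\omega_1(r)$ using Lemma \ref{lemma:int-C1} at $z'$ on the larger ball $B_{2r}(z')$ together with $\omega_1(2r) \leq 2\omega_1(r)$ (a consequence of $s^{-1}\omega_1(s)$ being non-increasing, per Remark \ref{remark:v-C1}); and $I_3 \leq CM\omega(r) \leq CM\omega_1(r)$ from the pointwise bound above and $\omega \leq \omega_1$. Summing yields \eqref{eq:Dlz-C}.

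The principal obstacle I anticipate is that a naive $L^\infty$ route --- trying to extract $\nabla l_z - \nabla l_{z'}$ from the affine function $l_z - l_{z'}$ using only the bound $\norm{v_z - l_z}_{L^\infty(B_r(z))} \leq CMr\omega_1(r)$ --- would produce a spurious $1/r$ factor in front of the term $v_z - v_{z'} = (\kappa(z') - \kappa(z))u^-$, leaving a contribution of size $\omega(r)/r$. This is \emph{not} controlled by any constant multiple of $\omega_1(r)$ in the Dini regime (take $\omega(r) = r^\alpha$ to see $r^{\alpha-1}$ versus $r^\alpha$). Working on the gradient, where $\nabla v_z - \nabla v_{z'}$ is already of size $\omega(r)$ thanks to the Lipschitz control of $u$, removes the offending factor and makes the estimate go through cleanly.
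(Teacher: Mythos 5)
Your proposal is correct and follows essentially the same route as the paper: the paper also extracts $\nabla l_z - \nabla l_{z'}$ from gradient averages over a common ball of radius comparable to $|z-z'|$, using the approximation estimate of Lemma \ref{lemma:int-C1} at both points together with the pointwise identity $\nabla v_z - \nabla v_{z'} = (\kappa(z')-\kappa(z))\nabla u^-$ and the Lipschitz bound on $u$. The only differences are cosmetic ($L^p$ versus $L^2$ averages, and your explicit treatment of the range $|z-z'|\geq r_0$ in place of the paper's appeal to a covering argument); your closing observation about why the $L^\infty$ route on function values would leave an uncontrolled $\omega(r)/r$ is accurate and explains why the argument must run through the gradient.
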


\begin{proof} 
Denote by $w_i$ and $q_i$ function $v_{z_i} = u^+ - \kappa(z_i) u^-$ and respectively vector $\nabla l_{z_i}$. It follows immediately from \eqref{eq:a-ellip}, \eqref{eq:a-Dini} and \eqref{eq:a-k} that $|\kappa(z_1) - \kappa(z_2)| \leq C_1\omega(|z_1 - z_2|)$, where $C_1$ depends only on $n$, $\lambda$ and $\omega$. Therefore, noting that $\nabla w_1 - \nabla w_2 = (\kappa(z_1) - \kappa(z_2)) \nabla u^-$ a.e., we can deduce from the local $L^\infty$-estimate \eqref{eq:u-Du-Linf} of $|\nabla u|$ that
\begin{equation}\label{eq:Dv1-Dv2-L2}
\norm{\nabla w_1 - \nabla w_2 }_{L^\infty(B_{3/4})} \leq C_2 \omega(|z_1 - z_2|),
\end{equation}
for another constant $C_2>0$ depending only on $n$, $\lambda$ and $\omega$. 

Now it suffices to consider the case where $|z_1 - z_2| < \frac{1}{8}$, as the general case will follow from a covering argument. Choose $r_0\in(0,\frac{1}{8})$ such that $r_0 < |z_1 - z_2 | \leq 2r_0$. Then from \eqref{eq:phiz-lz-W12-re} and \eqref{eq:Dv1-Dv2-L2}, we obtain  
\begin{equation*}
\begin{split}
|q_1 - q_2| &\leq \beta_n^{-\frac{1}{2}}r_0^{-\frac{n}{2}} \left( \sum_{i=1}^2 \norm{\nabla w_i - q_i}_{L^2(B_{r_0}(z_1))} + \norm{\nabla (w_1 -  w_2)}_{L^2(B_{r_0}(z_1))}\right) \\
&\leq \beta_n^{-\frac{1}{2}} r_0^{-\frac{n}{2}} \sum_{i=1}^2 \norm{\nabla w_i - q_i}_{L^2(B_{2r_0}(z_i))} + C_2 \omega(|z_1- z_2|) \\
&\leq C_3 (\omega_1(2r_0) + \omega(|z_1 - z_2|) ) \\
& \leq C_4 \omega_1(|z_1 - z_2|),
\end{split}
\end{equation*} 
where $\beta_n$ is the volume of the $n$-dimensional unit ball, while $C_3$ and $C_4$ are positive constants depending at most on $n$, $\lambda$ and $\omega$. This finishes the proof. 
\end{proof} 

Theorem \ref{theorem:int-Lip} is now a simple byproduct of Lemma \ref{lemma:int-C1} and Lemma \ref{lemma:int-C1-more}.

\begin{proof}[Proof of Theorem \ref{theorem:int-Lip}]
Recall from the proof of Lemma \ref{lemma:int-C1} that \eqref{eq:u-Du-Linf} holds a.e. in $\Omega$, hence the local $W^{1,\infty}$-estimate \eqref{eq:int-Lip} of $u$ is established. Next, we set $Vu: B_{1/2}\ra \R^n$ by 
\begin{equation}\label{eq:Vu}
Vu(z) =\nabla l_z,
\end{equation}
where $l_z$ is the unique affine function satisfying \eqref{eq:phiz-lz-W12}. Then \eqref{eq:int-C1-re} and \eqref{eq:int-Lip} are simply an alternative expression of \eqref{eq:phiz-lz-W12} and respectively \eqref{eq:Dlz-C}. 
\end{proof} 


\section{Regular Part of Nodal Set}\label{section:reg} 

This section is concerned with the regularity of the nodal set, i.e., the zero-level set, of a weak solution to \eqref{eq:main-gen}. We are going to study the regularity of $\{u=0\}$ around regular (or non-degenerate) points, which are the points where its ``gradient'' does not vanish, while the solutions itself vanishes. However, as the gradient of $u$ cannot be defined on the nodal set, we shall define these points by its growth order. More precisely, we shall define the class of regular points by
\begin{equation}\label{eq:reg}
\begin{split}
\cN(u) &= \{u=0\}\cap \left\{ z \in B_1: \limsup_{r\ra 0} \frac{\norm{u}_{L^2(B_r(z))}}{r^{\frac{n}{2} + 1}} > 0\right\} \\
&= \{u=0\}\cap \{|Vu| > 0\},
\end{split}
\end{equation} 
where the second identity is due to Theorem \ref{theorem:int-Lip}, with $Vu$ as in the statement; note that one can actually define $Vu$ on the whole $B_1$ via a simple scaling argument. 

With the uniform point-wise $C^{1,\omega_1}$-approximation derived in Lemma \ref{lemma:int-C1}, the regularity of our nodal set around a regular point follows easily from two fundamental tools from multivariable calculus, which are Whitney's extension theorem and the inverse function theorem.

\begin{theorem}\label{theorem:fb-reg}
Under the same setting as in Theorem \ref{theorem:int-Lip}, and with $\cN(u)$ defined as in \eqref{eq:reg}, $\{u=0\}$ is closed in $B_1$, and $\cN(u)$ is relatively open in $\{u=0\}$. Moreover, for each $z\in\cN(u)$, there is a constant $r\in(0,\dist(z,\partial\Omega))$, determined by $n$, $\lambda$, $\omega$ and the value of $\limsup_{\rho\ra 0} \rho^{-\frac{n}{2} - 1}\norm{u}_{L^2(B_\rho(z))}$ only, such that $\{u=0\}\cap B_r(z)$ is locally a $C^{1,\omega_1}$-graph.
\end{theorem}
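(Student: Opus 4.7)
The plan is to reduce the statement to Whitney's extension theorem combined with the classical implicit function theorem, exploiting the uniform affine approximation from Lemma \ref{lemma:int-C1} and the continuity of $Vu$ from Lemma \ref{lemma:int-C1-more}.

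The first two assertions are straightforward consequences of Theorem \ref{theorem:int-Lip}: since $u \in W^{1,\infty}(B_{1/2}) \subset C(B_{1/2})$, the set $\{u=0\}$ is closed in $B_1$, and since $Vu \in C^{0,\omega_1}(B_{1/2})$, the set $\{|Vu|>0\}$ is open, so $\cN(u) = \{u=0\} \cap \{|Vu|>0\}$ is relatively open in $\{u=0\}$.

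For the main assertion, fix $z \in \cN(u)$ and put $K_r = \{u=0\} \cap B_r(z)$ for $r>0$ to be chosen in terms of $|Vu(z)|$. The key observation is that for every $w \in K_r$ the affine map from Lemma \ref{lemma:int-C1} satisfies $l_w(w) = v_w(w) = 0$ and $\nabla l_w = Vu(w)$, so that $l_w(x) = Vu(w) \cdot (x - w)$. For any pair $w, w' \in K_r$, the fact $v_{w'}(w) = u^+(w) - \kappa(w') u^-(w) = 0$ together with Lemma \ref{lemma:int-C1} yields
\begin{equation*}
|Vu(w') \cdot (w - w')| = |l_{w'}(w) - v_{w'}(w)| \le C |w - w'| \omega_1(|w - w'|),
\end{equation*}
while Lemma \ref{lemma:int-C1-more} gives $|Vu(w) - Vu(w')| \le C \omega_1(|w - w'|)$. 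These are exactly the compatibility conditions for the Whitney extension theorem in the $C^{1,\omega_1}$ class, so there exists $\tilde u \in C^{1,\omega_1}(B_r(z))$ with $\tilde u|_{K_r} \equiv 0$ and $\nabla \tilde u|_{K_r} \equiv Vu|_{K_r}$. Since $|\nabla \tilde u(z)| = |Vu(z)| > 0$, the implicit function theorem, valid in the $C^{1,\omega_1}$ category, produces some $r' \le r$ and a $C^{1,\omega_1}$-graph $\Gamma = \{\tilde u = 0\} \cap B_{r'}(z)$, with $K_{r'} \subset \Gamma$ by construction.

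The main obstacle is the reverse inclusion $\Gamma \subset K_{r'}$, which I would handle by a sign analysis using Lemma \ref{lemma:int-C1} directly. Setting $\nu = Vu(z)/|Vu(z)|$, so that $l_z(x) = |Vu(z)|\,\nu \cdot (x-z)$, the bound $|v_z(x) - l_z(x)| \le C |x - z| \omega_1(|x - z|)$ forces $v_z > 0$ on the cone $\{\nu \cdot (x - z) \ge \tfrac12 |x - z|\}$ and $v_z < 0$ on the reflected cone, provided $|x - z|$ is small enough that $C \omega_1(|x-z|) < \tfrac14 |Vu(z)|$, a threshold depending only on $|Vu(z)|$ and the universal constants; equivalently, $u > 0$ and $u < 0$ on these respective cones. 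Because $\Gamma$ is tangent to $\{\nu\cdot(x-z) = 0\}$ at $z$, for $r'$ small enough the two cones lie in the two distinct connected components $\Gamma^\pm$ of $B_{r'}(z) \setminus \Gamma$. Since the only zeros of $u$ in $B_{r'}(z)$ belong to $K_{r'} \subset \Gamma$, the continuous function $u$ is everywhere nonzero on each connected $\Gamma^\pm$ and has definite sign somewhere inside, whence $u > 0$ on $\Gamma^+$ and $u < 0$ on $\Gamma^-$. Approaching any $y \in \Gamma$ from both sides yields $u(y) = 0$, giving $\Gamma = K_{r'}$ and finishing the proof.
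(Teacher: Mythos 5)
Your proposal is correct and follows essentially the same route as the paper: verify the two Whitney compatibility conditions from Lemma \ref{lemma:int-C1} (which gives $l_w(w)=v_w(w)=0$ and the first-order Taylor bound $|Vu(w')\cdot(w-w')|\leq C|w-w'|\omega_1(|w-w'|)$ on the nodal set) and Lemma \ref{lemma:int-C1-more} (continuity of $z\mapsto Vu(z)$), extend to a $C^{1,\omega_1}$ function, and invoke the implicit function theorem at a point where $|Vu(z)|>0$. The one substantive difference is that you also prove the reverse inclusion $\Gamma\subset\{u=0\}$ by the cone/sign argument: the bound $|v_z-l_z|\leq C|x-z|\omega_1(|x-z|)$ forces $u>0$ and $u<0$ on opposite cones around $\nu=Vu(z)/|Vu(z)|$, and since $u$ is continuous and nonvanishing on each of the two connected components of $B_{r'}(z)\setminus\Gamma$, it has a definite sign on each, so every point of $\Gamma$ is a two-sided limit of zeros of sign. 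The paper stops at the inclusion $\{u=0\}\cap\overline{B_{1/2}}\subset\{g=0\}$ and disposes of the converse with ``taking $r$ smaller if necessary''; your argument makes that step explicit, which is needed if one reads ``is locally a $C^{1,\omega_1}$-graph'' as equality rather than containment. Two cosmetic points: take $K_r$ to be $\{u=0\}\cap\overline{B_r(z)}$ so that Whitney's theorem applies to a compact set, and normalise $\norm{u}_{L^2(B_1)}$ and $\norm{f}_{C^{0,\omega}(B_1)}$ (as the paper does) so that the radius depends only on $n$, $\lambda$, $\omega$ and $|Vu(z)|$ as claimed.
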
 

\begin{remark}\label{remark:fb-reg}
This result generalises both of the previous works \cite{KLS1,KLS2}, as it now applies to any Dini modulus of continuity; moreover, it is optimal in the sense that it produces the same H\"older seminorm as in \cite{KLS1,KLS2} when our modulus of continuity is H\"older continuous, since $\omega_1$ in the statement is given as in Remark \ref{remark:v-C1}.
\end{remark}

\begin{proof}
For simplicity, let us consider the case $\norm{u}_{L^2(B_1)} \leq 1$, $\norm{f}_{C^{0,\omega}(B_1)} \leq 1$ and $z\in \cN(u)\cap \overline{B_{1/2}}$. The general case follows from an obvious scaling argument.

Due to Theorem \ref{theorem:int-Lip}, $Vu\in C^{0,\omega_1}(B_{3/4})$. In view of the second line in \eqref{eq:reg}, the continuity of $|Vu|$ implies that $\cN(u)$ is a relatively open set in $\{u=0\}$. 

On the other hand, it is clear from the (Lipschitz) continuity of $u$ (Theorem \ref{theorem:int-Lip}) that $\{u=0\}$ is a closed set. Hence, $\{u = 0\}\cap \overline{B_{1/2}}$ is also compact. Now due to the uniform $C^1$-approximation \eqref{eq:phiz-lz-W12}, we have 
\begin{equation*}
| Vu(z)\cdot (x-z) | \leq C_0|x-z|\omega_1(|x-z|),
\end{equation*}
for any $x,z\in \{u=0\}\cap \overline{B_{1/2}}$, where $C_0$ is a positive constant depending only on $n$, $\lambda$ and $\omega$. This together with the uniform continuity \eqref{eq:int-Lip} of $Vu$ on $\overline{B_{1/2}}$ allows us to use Whitney's extension theorem \cite{whit} (see also \cite{gla} and \cite{Fef09} that gives an explicit dependence on the modulus of continuity $\omega_1$) to assure the existence of a function $g\in C^{1,\omega_1}(\R^n)$ such that 
\begin{equation*}
g = 0,\quad \nabla g = Vu\quad\text{on }\{ u=0\}\cap\overline{B_{1/2}}\text{ and}
\end{equation*} 
\begin{equation*}
\norm{g}_{C^{1,\omega_1}(\R^n)} \leq c_0 \norm{Vu}_{C^{0,\omega_1}(\{u=0\}\cap\overline{B_{1/2}})} \leq c_0C_0,
\end{equation*}
with $c_0$ depending only on $n$ and $\omega_1$. 

Now if $z\in \cN(u)\cap \overline{B_{1/2}}$, then $|g(z)| = |Vu(z)| > 0$, so it follows from the implicit function theorem that $\{g = 0\}\cap B_r(z)$ is a $C^{1,\omega_1}$-graph, for some $r>0$ depending only on $\norm{g}_{C^{1,\omega_1}(\R^n)}$ and value $|g(z)|$, hence on $n$, $\lambda$, $\omega$ and $|Vu(z)|$ only. Since $\{u = 0 \}\cap \overline{B_{1/2}} \subset\{g=0\}$, this theorem is proved by taking $r$ smaller if necessary. 
\end{proof}


\section{Structure of Singular Set}\label{section:sing}

In this section, we are going to study the singular set of weak solutions to 
\begin{equation}\label{eq:main-sing}
\ddiv (a_+ \nabla u^+ - a_- \nabla u^-) = 0,
\end{equation}
which is the set where the solutions vanish faster than any linear function, i.e., 
\begin{equation}\label{eq:sing}
\cS(u) = \left\{ z\in \{u=0\}: \lim_{r\ra 0} \frac{\norm{u}_{L^2(B_r(z))}}{r^{\frac{n}{2} + 1}} = 0\right\}.
\end{equation}
We shall call $z\in\cS(u)$ a singular point of $u$. Our main result of this section can now be stated as follows. 

\begin{theorem}\label{theorem:sing}
Assume that $a_+,a_- \in (C^\omega(B_1))^{n^2}$ satisfy \eqref{eq:a-ellip}, \eqref{eq:a-Dini} and \eqref{eq:a-k}, and let $u\in W^{1,2}(B_1)$ be a weak solution to \eqref{eq:main-sing}. Then 
\begin{equation}\label{eq:sing-decom}
\cS(u) = \left(\bigcup_{d=2}^\infty \cS_d(u)\right) \cup \cS_\infty(u),
\end{equation}
where $\cS_d(u)$ and $\cS_\infty(u)$ are defined by \eqref{eq:sing-d} and respectively \eqref{eq:sing-inf}. Moreover, we have 
\begin{equation}\label{eq:sing-decom-d}
\cS(u)\setminus \cS_\infty(u) = \bigcup_{j=0}^{n-2} \cS^j(u),
\end{equation} 
where $\cS^j(u)$ is on a countable union of $j$-dimensional $C^{1,\omega_1}$-manifolds, with $\omega_1$ being a modulus of continuity determined solely by $\omega$, for each $j=0,1,\cdots,n-2$.
\end{theorem}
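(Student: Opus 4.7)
The plan is to bootstrap the first-order approximation of Lemma \ref{lemma:int-C1} to higher order via Proposition \ref{proposition:v-Cd} and its pointwise counterpart Lemma \ref{lemma:int-Cd}. Since $v_z = u^+ - \kappa(z) u^-$ satisfies $\ddiv(a_+(z) \nabla v_z) = \ddiv F_z$ with $F_z$ Dini-small near $z$ (cf.\ the proof of Lemma \ref{lemma:int-C1}), iterating the higher-order approximation assigns, to each $z \in \cS(u) \setminus \cS_\infty(u)$, a unique integer $d(z) \geq 2$ and a nonzero homogeneous harmonic polynomial $P_z$ of degree $d(z)$ with
\begin{equation*}
\| v_z - P_z(\cdot - z) \|_{L^\infty(B_r(z))} \leq C r^{d(z)} \omega_1(r)
\end{equation*}
for small $r$. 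Defining $\cS_d(u) = \{z \in \cS(u) : d(z) = d\}$ and $\cS_\infty(u)$ as the complementary set in $\cS(u)$ yields \eqref{eq:sing-decom}. The integrality $d(z) \in \N$ is forced by the homogeneity of blow-up limits: the rescalings $v_z(z + r\cdot)/r^{d(z)}$ converge to a nontrivial homogeneous harmonic polynomial, whose degree is an integer.

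For the dimension stratification, recall that the spine $L(P) = \{\xi \in \R^n : P(\cdot + \xi) = P\}$ of a nonzero homogeneous harmonic polynomial $P$ of degree $d \geq 2$ satisfies $\dim L(P) \leq n - 2$, since $P$ descends to a nontrivial homogeneous harmonic polynomial on $L(P)^\perp$ with trivial spine, which must depend on at least two variables. Setting
\begin{equation*}
\cS_d^j(u) = \{z \in \cS_d(u) : \dim L(P_z) = j\}, \quad \cS^j(u) = \bigcup_{d \geq 2} \cS_d^j(u),
\end{equation*}
for $j = 0, 1, \ldots, n - 2$ then produces the decomposition \eqref{eq:sing-decom-d}.

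To show each $\cS^j(u)$ lies on a countable union of $j$-dimensional $C^{1,\omega_1}$-manifolds, I would fix $d \geq 2$ and partition $\cS_d^j(u)$ into countably many pieces $E_m$ on which the coefficients of $P_z$ are uniformly bounded and bounded away from degeneracy, and the spines $L(P_z)$ lie inside a narrow cone around a fixed $j$-dimensional subspace $L_0$. The crucial step is the rigidity estimate
\begin{equation*}
|\pi_{L_0^\perp}(z - z')| \leq C |z - z'|\, \omega_1(|z - z'|) \quad \text{for } z, z' \in E_m,
\end{equation*}
which I would prove by restricting the two polynomial approximations to $B_{2|z-z'|}(z)$, subtracting, and noting that the degree-$d$ polynomial $P_z(\cdot - z) - P_{z'}(\cdot - z')$ has $L^\infty$-norm at most $C|z-z'|^d \omega_1(|z-z'|)$ on that ball; decomposing $z' - z = a + b$ with $a \in L_0$ and $b \in L_0^\perp$, the shift by $a$ changes $P_z$ negligibly (approximate spine direction), while nondegeneracy of $P_z$ transverse to its spine forces the shift by $b$ to change $P_z$ on $B_{|z-z'|}(z)$ by at least $c|z-z'|^{d-1} |b|$, and combining gives the claim. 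With this estimate in hand, Whitney's extension theorem (as in the proof of Theorem \ref{theorem:fb-reg}) produces a map $G \in C^{1,\omega_1}(\R^n; L_0^\perp)$ with $G|_{E_m} = 0$ and $\nabla G|_{E_m} = \pi_{L_0^\perp}$, and the implicit function theorem exhibits $E_m$ as a subset of a $j$-dimensional $C^{1,\omega_1}$-graph over $L_0$. The main obstacle is the rigidity estimate for $j < n - 2$: Han's classical argument handles only the top stratum $j = n - 2$, whereas the general case genuinely needs the quantitative uniform polynomial approximation of Proposition \ref{proposition:v-Cd}, which is precisely what delivers the improvement over \cite{Han} highlighted in the introduction.
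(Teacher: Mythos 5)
Your first decomposition \eqref{eq:sing-decom} and the stratification by $\dim L(P_z)$ follow the paper's route (Lemma \ref{lemma:int-Cd} giving $\Gamma_{d-1}(u)=\cS_d(u)\cup\Gamma_d(u)$, induction on $d$), and that part is fine. The gap is in the covering of $\cS_d^j(u)$: your ``rigidity estimate'' $|\pi_{L_0^\perp}(z-z')|\leq C|z-z'|\,\omega_1(|z-z'|)$ with a \emph{fixed} reference subspace $L_0$ is false whenever the stratum is genuinely curved. It is exactly the Whitney compatibility condition for the data $G\equiv 0$, $\nabla G\equiv \pi_{L_0^\perp}$ on $E_m$, so it forces the tangent plane of the resulting graph to equal $L_0$ at \emph{every} point of $E_m$, i.e.\ it forces $E_m$ to be flat to first order. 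Concretely, take $n=3$ and the harmonic polynomial $u=xy+\e\bigl(xz^2-\tfrac{x^3}{3}\bigr)$: its singular set contains the parabola $\{x=0,\ y=-\e z^2\}$, each point of which lies in $\cS_2^1(u)$ with spine $L(P_z)$ equal to the tangent line of the parabola, which rotates along the curve. For two points $z,z'$ on this parabola near a fixed $z_0$ with $z_0\neq 0$, one has $|\pi_{L_0^\perp}(z-z')|\approx 2\e|z_0|\,|z-z'|$, which is not $o(|z-z'|)$ no matter how you choose $L_0$ or how finely you partition into pieces $E_m$. The flaw in your derivation is the step ``the shift by $a\in L_0$ changes $P_z$ negligibly'': since $L_0$ only approximates $L(P_z)$ to within the cone aperture $\e$, that shift changes $P_z$ by up to $C\e|z-z'|^d$ on $B_{|z-z'|}(z)$, which dominates the available error $C|z-z'|^d\omega_1(|z-z'|)$ at small scales; you only recover $|b|\leq C|z-z'|(\omega_1(|z-z'|)+\e)$, i.e.\ Lipschitz flatness, not $C^{1,\omega_1}$.

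The repair is to let the first-order Whitney datum vary with the point, replacing $\pi_{L_0^\perp}$ by (a full-rank parametrisation of) $\pi_{L(P_z)^\perp}$; the correct rigidity statement is $\dist(z'-z,\,L(P_z))\leq C|z-z'|\,\omega_1(|z-z'|)$ together with $\omega_1$-continuity of $z\mapsto L(P_z)$ on the nondegenerate piece, the latter coming from \eqref{eq:DdPz-C}. The paper implements this by applying Whitney's extension to the entire $d$-jet furnished by Lemma \ref{lemma:int-Cd}, obtaining $g\in C^{d,\omega_1}(\R^n)$ with $\partial^\beta g=0$ on $\Gamma_{d-1}(u)\cap K$ for $|\beta|\leq d-1$ and $\partial^\beta g(z)=\partial^\beta P_z$ for $|\beta|=d$, and then invoking a linear-algebra lemma (Lemma \ref{lemma:delta}) to select $n-j$ multi-indices $\beta_l$ with $|\beta_l|=d-1$ such that the Jacobian of $(\partial^{\beta_1}g,\dots,\partial^{\beta_{n-j}}g)$ has full rank at $z$ with kernel $L(P_z)$; the implicit function theorem applied to these $C^{1,\omega_1}$ defining functions then yields the $j$-dimensional $C^{1,\omega_1}$ graph. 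You correctly identified that the lower strata $j<n-2$ are where the uniform higher-order approximation is genuinely needed, but the fixed-plane projection scheme cannot deliver the $C^{1}$ (let alone $C^{1,\omega_1}$) conclusion there or even for $j=n-2$.
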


Some remarks are in order. 

\begin{remark}\label{remark:sing2}
Singular set is studied extensively for elliptic PDEs whose leading coefficients are H\"older continuous, e.g., \cite{Bers}, \cite{Han} and \cite{Chen}, where it is shown that a weak solution to such an elliptic PDE must vanish with an integer order, unless the vanishing order is infinite. Moreover, if the vanishing order is an integer, it can be approximated by a homogeneous harmonic polynomial, up to a change of coordinates, with an algebraic error estimate. 

Nevertheless, there has not yet been any analogous result when the leading coefficients are only Dini continuous, to the best of the author's knowledge. The reason that the classical argument by iteration fails with Dini continuous, yet not H\"older coefficients in a nutshell is that given a modulus of continuity $\omega$, one may have $\omega(r)^k \gg r$ for any sufficiently large $k$, unless $\omega(r) = O(r^\alpha)$ for some $\alpha\in(0,1)$ from the beginning. 
\end{remark} 

\begin{remark}\label{remark:sing}
Recall from Remark \ref{remark:v-C1} that if $\omega$ is a H\"older modulus of continuity, then so is $\omega_1$ with the same H\"older exponent, say $\alpha$. Thus, our result in Theorem \ref{theorem:sing} improves the classical one \cite{Han} in the sense that not only the singular set with the top dimension, $n-2$, is covered by $C^{1,\alpha}$-manifolds, but also are the set with all lower dimensions. 
\end{remark}

To begin with, let us introduce some terminologies regarding vanishing order. Given any positive real $d \geq 1$, define 
\begin{equation}\label{eq:sing-d}
\cS_d(u) = \left\{ z\in\{u=0\}: 0< \limsup_{r\ra 0} \frac{\norm{u}_{L^2(B_r(z))}}{r^{\frac{n}{2} + d}} < \infty \right\},
\end{equation} 
\begin{equation}\label{eq:zero-d}
\Gamma_d(u) = \left\{ z\in\{u=0\}: \lim_{r\ra 0} \frac{\norm{u}_{L^2(B_r(z))}}{r^{\frac{n}{2} + d}} = 0 \right\},\quad\text{and}
\end{equation}
\begin{equation}\label{eq:sing-inf}
\cS_\infty(u) = \bigcap_{d > 0} \Gamma_d(u).
\end{equation}
We shall say that if $z\in\cS_d(u)$, $\Gamma_d(u)$ or $\cS_\infty(u)$, then $u$ vanishes at $z$ with order exactly, at least, or respectively for any $d$. 

It is clear that $\cS(u) = \Gamma_1(u)$, $\cN(u) = \cS_1(u)$, and $\bigcup_{d\geq 1}\cS_d(u)\cup \cS_\infty(u) \subset \cS(u)$. However, the converse fails to hold in general, since $\cS_d(u)$ does not include the points where the vanishing order is not algebraic; for instance, if $u(x) = |x|^2 \log \frac{1}{|x|}$, then $0\in \cS(u)$ but $0\not\in \cS_d(u)$ for any $d \in (1,2]$. 

We shall establish, for each $d\in\N$, a point-wise $C^d$-approximation of $u^+-\kappa (z) u^-$ by a homogeneous harmonic polynomial of degree $d$, after a change of coordinate, if necessary, at any $z\in\Gamma_{d-1}(u)$, hence proving that $\Gamma_{d-1}(u) = \cS_d(u)\cup \Gamma_d(u)$. Then the first decomposition \eqref{eq:sing-decom} of the singular set, $\cS(u)$, follows by an induction on $d\in\N$. The second decomposition \eqref{eq:sing-decom-d}, or more accurately the structure theorem for $\cS_d(u)$, will follow from a uniformity of the point-wise $C^d$-approximation of $u^+ - \kappa(z) u^-$ along the vanishing points $z\in\cS_d(u)$.  

The point-wise $C^d$-approximation will be carried out by an iteration argument, which resembles the proof of Proposition \ref{proposition:v-C1}. However, since only the Dini continuity is allowed, we will require several intermediate steps. 

First, let us recall some classical approximation result for almost harmonic functions. 

\begin{lemma}\label{lemma:v-Cd}
Let $d\in\N$ be arbitrary, and suppose that $v\in W^{1,2}(B_1)$ satisfies 
\begin{equation*}
\max \left\{ \norm{v}_{L^2(B_1)} , \sup_{r\in(0,1)} \frac{\norm{\Delta v}_{W^{-1,2}(B_r)} }{r^{\frac{n}{2} + d-1}\omega(r)} \right\} \leq 1,
\end{equation*}
for some Dini modulus of continuity $\omega$. Then there exists a harmonic polynomial $P$ of degree $d$ for which
\begin{equation*}
\sup_{r\in(0,1)} \frac{\norm{v - P}_{L^2(B_r)} + r\norm{\nabla (v - P)}_{L^2(B_r)}}{r^{\frac{n}{2} + d} \omega_1(r),} \leq C_d
\end{equation*}
where $C_d>0$ depends only on $n$, $\omega$ and $d$, and $\omega_1$ is determined by $\omega$ alone. 
\end{lemma}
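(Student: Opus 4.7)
The plan is to mimic the iteration of Proposition \ref{proposition:v-C1}, replacing affine functions by harmonic polynomials of degree $d$, and using that the degree-$d$ Taylor polynomial of a harmonic function at a point is itself a harmonic polynomial. Paralleling \eqref{eq:mu}--\eqref{eq:rho}, I would first pick $\mu\in(0,\tfrac{1}{2})$ so small that $4\bar c\,\omega(1)\mu\leq\omega(\mu)$, which controls the Taylor remainder at order $d+1$, and then $\rho\in(0,\tfrac{1}{2})$ so small that $4\bar c\,\omega(1)\psi(\rho)\leq \mu^{n/2+d}\omega(\mu)$ with $\psi(\rho)=\sum_{k=0}^\infty\omega(\rho\mu^k)$; both choices are feasible, the second only because $\omega$ is Dini.

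Next I would inductively construct degree-$d$ harmonic polynomials $\{P_k\}_{k\geq 0}$ with $P_0=0$ such that
\begin{equation*}
\mu^{-k}\norm{v-P_k}_{L^2(B_{\mu^k})}+\norm{\nabla(v-P_k)}_{L^2(B_{\mu^k})}\leq \mu^{k(n/2+d-1)}\,\frac{\omega(\rho\mu^k)}{\omega(\rho)},
\end{equation*}
together with coefficient-updates $|\partial^\beta P_k(0)-\partial^\beta P_{k-1}(0)|\leq c_0\,\mu^{k(d-|\beta|)}\omega(\rho\mu^k)/\omega(\rho)$ for $|\beta|\leq d$. At step $m$, I would rescale to $v_m(x)=(\omega(\rho)/[\mu^{md}\omega(\rho\mu^m)])(v-P_m)(\mu^m x)$, which is bounded in $W^{1,2}(B_1)$, and verify by the same scaling computation as in \eqref{eq:vm-pde} that $\norm{\Delta v_m}_{W^{-1,2}(B_1)}\lesssim \psi(\rho)$; then compare $v_m$ with the harmonic extension $h$ of its own boundary values via the Dirichlet problem to obtain $\norm{v_m-h}_{W^{1,2}(B_1)}\lesssim \psi(\rho)$; apply interior $C^{d+1}$-estimates for harmonic functions (yielding $\norm{h}_{C^{d+1}(B_{1/2})}\leq c(n,d)\norm{h}_{L^2(B_1)}$) to produce its degree-$d$ Taylor polynomial $q_m$ at the origin, with $\norm{h-q_m}_{L^\infty(B_\mu)}+\mu\norm{\nabla(h-q_m)}_{L^\infty(B_\mu)}\leq c_3\mu^{d+1}$; and set $P_{m+1}(x)=P_m(x)+(\mu^{md}\omega(\rho\mu^m)/\omega(\rho))q_m(x/\mu^m)$, which remains a harmonic polynomial of degree $d$ since $q_m$ is. The parameter choices then force $\norm{v_m-q_m}_{L^2(B_\mu)}+\mu\norm{\nabla(v_m-q_m)}_{L^2(B_\mu)}\leq \mu^{n/2+d}\omega(\mu)/(2\omega(1))$, and the Dini-multiplicative inequality $\omega(1)\omega(\rho\mu^{m+1})\geq\omega(\rho\mu^m)\omega(\mu)$ rewrites this in the form required to close the induction at step $m+1$.

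The coefficient bounds will sum geometrically, so $P_k\to P$ in $C^d$-norm for some harmonic polynomial $P$ of degree $d$. To pass from the discrete bounds at $r=\mu^k$ to arbitrary $r\in(0,1)$ I would proceed exactly as in Proposition \ref{proposition:v-C1}: for $k$ with $\mu^{k+1}<r\leq\mu^k$, bound the two pieces of $P-P_k$ via $\sum_{j>k}\omega(\rho\mu^j)$ and $r\sum_{j\leq k}\mu^{-j}\omega(\rho\mu^j)$, which are Riemann sums for $\int_0^r\omega(s)/s\,ds$ and $r\int_r^1\omega(s)/s^2\,ds$ respectively; combined with the step-$k$ bound, this yields the claimed $r^{n/2+d}\omega_1(r)$ rate, with $\omega_1$ as in Remark \ref{remark:v-C1}. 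The main obstacle will be establishing the uniform estimate $\norm{\Delta v_m}_{W^{-1,2}(B_1)}\lesssim \psi(\rho)$ at each step, in the presence of the stronger hypothesis $\norm{\Delta v}_{W^{-1,2}(B_r)}\leq r^{n/2+d-1}\omega(r)$: this is where the Dini hypothesis on $\omega$ is genuinely used, and it is also why the output modulus $\omega_1$ must arise from the Dini integrals rather than from $\omega$ itself.
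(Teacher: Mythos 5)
Your outline (iterate with the degree-$d$ Taylor polynomials of successive harmonic replacements, then telescope the coefficient increments into the two Dini integrals defining $\omega_1$) is the right skeleton, and the paper offers no proof of its own to compare against. But the step you yourself flag as ``the main obstacle'' is a genuine gap, not a technicality, and as written your induction does not close. With your normalisation $v_m(x)=\frac{\omega(\rho)}{\mu^{md}\omega(\rho\mu^m)}(v-P_m)(\mu^m x)$ and the scaling identity $\norm{\Delta v_m}_{W^{-1,2}(B_1)}=\frac{\omega(\rho)}{\mu^{md}\omega(\rho\mu^m)}\,\mu^{m(1-\frac{n}{2})}\norm{\Delta v}_{W^{-1,2}(B_{\mu^m})}$, the hypothesis of the lemma gives only
\begin{equation*}
\norm{\Delta v_m}_{W^{-1,2}(B_1)}\;\leq\;\frac{\omega(\rho)\,\omega(\mu^m)}{\omega(\rho\mu^m)},
\end{equation*}
a quantity of order one (it is bounded below by a positive constant independent of $\rho$, e.g.\ it equals $\omega(1)$ when $\omega$ is a power), and in particular it is \emph{not} $\lesssim\psi(\rho)$ and cannot be made small by shrinking $\rho$. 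The $\psi(\rho)$-smallness in Proposition \ref{proposition:v-C1} comes entirely from the factor $\omega(\rho r)$ built into the hypothesis \eqref{eq:v-pde}; the hypothesis of Lemma \ref{lemma:v-Cd} contains no such free parameter, so transplanting \eqref{eq:vm-pde} verbatim is illegitimate. Consequently the harmonic-replacement error $\norm{v_m-h}_{W^{1,2}(B_1)}$ is $O(1)$ rather than $O(\psi(\rho))$, and the inequality $\norm{v_m-q_m}_{L^2(B_\mu)}\leq\mu^{\frac{n}{2}+d}\omega(\mu)/(2\omega(1))$ you need to restore the induction hypothesis is false.

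The repair is to restructure the induction so that the forcing term is allowed to be of size $\omega(\mu^k)$ rather than small: take as hypothesis $\norm{v-P_k}_{L^2(B_{\mu^k})}\leq M\mu^{k(\frac{n}{2}+d)}\omega(\mu^k)$ with a \emph{large} constant $M$ (absorbed into $C_d$), or equivalently unroll the linear recursion $b_{k+1}\leq C_{n,d}\,\mu\, b_k+K\omega(\mu^k)$ directly. The forcing term is then handled by choosing $M\geq 4K\omega(1)/\omega(\mu)$, and the only genuine smallness requirement is that the Taylor contraction beat the decay ratio, i.e.\ $C_{n,d}\,\mu\leq\tfrac12\,\omega(\mu^{k+1})/\omega(\mu^k)$; this holds for small $\mu$ after the harmless normalisation $\omega(s)\geq\sqrt{s}$ (replace $\omega$ by $\max\{\omega(s),\sqrt{s}\}$, which only enlarges both hypothesis and the admissible $\omega_1$ — the lemma asks merely for \emph{some} $\omega_1$ determined by $\omega$). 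Your concluding telescoping step, and the observation that the degree-$d$ Taylor polynomial of a harmonic function is itself a harmonic polynomial, are correct and can be kept as is once the induction is set up this way.
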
 

We remark that this approximation holds without any assumption on the vanishing order of $v$ (at the origin), since the difference between $v$ and an approximating harmonic polynomial, say $P$, is again almost harmonic, with the same smallness condition of $\Delta v$, i.e., $\Delta (v - P) = \Delta v$. Let us omit the proof for Lemma \ref{lemma:v-Cd}, as it is by now considered to be standard.  

However, the situation becomes different, once we have variable coefficients, since $\ddiv(a \nabla (v-P)) = \ddiv((a - I)\nabla P)$ only, provided that $\ddiv(a\nabla v) = 0$, and the smallness condition of $|a-I|$ is not sufficient to make $\ddiv((a-I)\nabla P)$ small as well, unless we can deduce that $|\nabla P|$ is also small. This is precisely the place where the vanishing order of $v$ comes into play, and it becomes essential to find a homogeneous harmonic polynomial whose degree coincides with the vanishing order of $v$. However, we need certain uniformity for the vanishing behaviour of $v$, in order to find such a homogeneous harmonic polynomial. This is the main assertion of the next lemma. 

\begin{lemma}\label{lemma:apprx-hom}
Let $d\in \N$ be arbitrary, and let $\theta$ be a modulus of continuity. Then for any $\e>0$, there exists a constant $\delta\in(0,1)$ depending only on $n$, $\theta$, $d$ and $\e$, such that if $v  \in W^{1,2}(B_1)$ satisfies 
\begin{equation*}
\norm{v}_{W^{1.2}(B_1)}\leq 1,\quad \norm{\Delta v}_{W^{-1,2}(B_1)} \leq \delta,\quad\text{and}
\end{equation*} 
\begin{equation*}
\sup_{r\in(0,1)} \frac{\norm{v}_{L^2(B_r)}}{r^{\frac{n}{2} + d - 1}\theta(r)} \leq 1.
\end{equation*}
Then there is a harmonic function $h$ on $B_1$ such that 
\begin{equation*}
\max_{|\beta|\leq d-1}|\partial^\beta h(0)| = 0,\quad \norm{ h }_{W^{1.2}(B_1)} \leq c\quad\text{and}
\end{equation*}
\begin{equation*}
\norm{v-h}_{W^{1,2}(B_1)} \leq \e,
\end{equation*}
where $c$ is a positive constant depending only on $n$ and $d$. 
\end{lemma}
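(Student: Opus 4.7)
The plan is to argue by compactness and contradiction, reducing to the harmonic replacement of $v$ and then correcting it by subtracting the low-degree part of its Taylor expansion (organized as a sum of solid harmonics). Suppose the conclusion fails: for some $\e_0>0$ and some fixed constant $c=c(n,d)$ (say $c=3$), one can extract sequences $\delta_k\to 0^+$ and $v_k\in W^{1,2}(B_1)$ satisfying the hypotheses with $\delta=\delta_k$, yet such that no harmonic $h$ on $B_1$ with $\norm{h}_{W^{1,2}(B_1)}\le c$ and $\partial^\beta h(0)=0$ for $|\beta|\le d-1$ obeys $\norm{v_k-h}_{W^{1,2}(B_1)}\le \e_0$.

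First I would introduce the harmonic replacement $h_k\in W^{1,2}(B_1)$, the unique harmonic function with $h_k-v_k\in W_0^{1,2}(B_1)$. Testing $\Delta(v_k-h_k)=\Delta v_k$ against $v_k-h_k$ and applying Poincar\'e gives $\norm{v_k-h_k}_{W^{1,2}(B_1)}\le C\delta_k\to 0$, so $\norm{h_k}_{W^{1,2}(B_1)}\le 2$ for large $k$. Passing to a subsequence, $v_k\to v_\infty$ strongly in $L^2(B_1)$ (Rellich) and $h_k\rightharpoonup v_\infty$ weakly in $W^{1,2}(B_1)$. Since each $h_k$ is harmonic, interior regularity upgrades this to $h_k\to v_\infty$ in $C^m(\overline{B_{1-\sigma}})$ for every $m\in\N$ and $\sigma>0$; in particular $v_\infty$ is harmonic on $B_1$ and $\partial^\alpha h_k(0)\to\partial^\alpha v_\infty(0)$ for every multi-index $\alpha$.

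To extract the vanishing Taylor part of $v_\infty$ at the origin, I would expand $v_\infty=\sum_{j\ge 0}H_j^\infty$ in solid (i.e.\ harmonic) homogeneous polynomials, which is possible since $v_\infty$ is harmonic and hence real-analytic. By $L^2(\partial B_r)$-orthogonality of spherical harmonics and the identity $\norm{H_j^\infty}_{L^2(B_r)}^2=c_j^\infty r^{n+2j}$ with $c_j^\infty=\norm{Y_j^\infty}_{L^2(\Ss^{n-1})}^2/(n+2j)$, the bound
\begin{equation*}
\sum_{j\ge 0}c_j^\infty r^{n+2j}=\norm{v_\infty}_{L^2(B_r)}^2\le r^{n+2d-2}\theta(r)^2\qquad\text{for all } r\in(0,1),
\end{equation*}
inherited from the hypothesis on $v_k$ via strong $L^2$ convergence, forces $c_j^\infty=0$ for $j\le d-1$, i.e., $\partial^\beta v_\infty(0)=0$ for $|\beta|\le d-1$.

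Finally I would correct each $h_k$ by its own low-degree Taylor part. Expanding $h_k=\sum_j H_{k,j}$ in solid harmonics, the coefficients of $H_{k,j}$ depend linearly on $\{\partial^\alpha h_k(0):|\alpha|\le j\}$ and thus converge to those of $v_\infty$, which are $0$ for $j\le d-1$; since the space of degree-$j$ solid harmonics is finite dimensional, $\norm{H_{k,j}}_{W^{1,2}(B_1)}\to 0$ for each such $j$. Setting $h_k^\ast:=h_k-\sum_{j=0}^{d-1}H_{k,j}$, the function $h_k^\ast$ is harmonic on $B_1$, satisfies $\partial^\beta h_k^\ast(0)=0$ for $|\beta|\le d-1$, and $\norm{h_k^\ast}_{W^{1,2}(B_1)}\le 2+o(1)\le c$ for large $k$, while
\begin{equation*}
\norm{v_k-h_k^\ast}_{W^{1,2}(B_1)}\le\norm{v_k-h_k}_{W^{1,2}(B_1)}+\sum_{j=0}^{d-1}\norm{H_{k,j}}_{W^{1,2}(B_1)}\to 0,
\end{equation*}
contradicting the standing assumption for $k$ large enough. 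The main subtlety is that weak $W^{1,2}(B_1)$-compactness of the $v_k$ does not by itself yield strong $W^{1,2}(B_1)$-convergence up to the boundary, so the argument must be routed through the harmonic replacements $h_k$: only once the $h_k$ are genuinely harmonic does interior regularity give the $C^m_{loc}$-convergence at the origin that kills the low-degree Taylor terms and closes the global $W^{1,2}(B_1)$ gap by an $o(1)$ correction coming from the finite-dimensional space of harmonic polynomials of degree $\le d-1$.
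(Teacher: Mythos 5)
Your proposal is correct and follows essentially the same route as the paper: a compactness/contradiction argument via the harmonic replacement $h_k - v_k \in W_0^{1,2}(B_1)$, identification of the weak limit as a harmonic function whose Taylor coefficients up to order $d-1$ vanish because of the $\theta$-modulated growth bound, and then subtraction of the low-degree part of $h_k$ (your sum of solid harmonics of degree $\le d-1$ coincides with the paper's degree-$(d-1)$ Taylor polynomial, since the homogeneous Taylor parts of a harmonic function are themselves harmonic), which tends to zero in the finite-dimensional space of such polynomials. The only cosmetic difference is that you make the vanishing of the low-order coefficients of the limit explicit via the $L^2(\partial B_r)$-orthogonality of spherical harmonics, where the paper simply invokes the standard fact.
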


\begin{remark}\label{remark:apprx-hom}
Let us remark that the dependence of $\delta$ on $\theta$ cannot be dropped, as the following example shows. Consider, in the two-dimensional space, $v_j(x) = x_1x_2(- \log (j|x|))^{-1}$, for each $j\in\N$. Then $\Delta v_j = o(1)$ as $j\ra\infty$, and $v_j$ vanishes at the origin with order at least $2$ for all $j=1,2,\cdots$, but $v_j  \ra h$ uniformly in $B_{1/2}$, with $h(x) = x_1x_2$ having $|D^2 h| > 0$. 
\end{remark}

\begin{proof}[Proof of Lemma \ref{lemma:apprx-hom}]
Fix a constant $\bar c$ and a modulus of continuity $\theta$, where $\bar c$ will be chosen later. Suppose towards a contradiction that there exists some constant $\e_0>0$, for which we can find a sequence $\{v_j\}_{j=1}^\infty\subset W^{1,2}(B_1)$ of functions satisfying
\begin{equation}\label{eq:vj-apprx-false}
\norm{v_j}_{W^{1,2}(B_1)} \leq 1,\quad \norm{\Delta v_j}_{W^{-1,2}(B_1)} \leq \frac{1}{j},\quad\text{and}
\end{equation}
\begin{equation}\label{eq:vj-apprx-false2}
\sup_{r\in(0,\frac{1}{2})} \frac{\norm{v_j}_{L^2(B_r)}}{r^{\frac{n}{2} + d - 1}\theta(r)} \leq 1, 
\end{equation}
but for any harmonic function $h$ on $B_1$ verifying
\begin{equation}\label{eq:harm-1}
\max_{|\beta| \leq d-1} | \partial^\beta h(0)| = 0,\quad\text{and}\quad \norm{h}_{W^{1,2}(B_1)} \leq \bar c,
\end{equation}
we must have 
\begin{equation}\label{eq:harm-2}
\norm{v_j - h}_{W^{1,2}(B_1)} \geq \e_0. 
\end{equation} 
Due to the boundedness of $\{v_j\}_{j=1}^\infty$ in $W^{1,2}(B_1)$, we can assume without loss of generality that $v_j \ra v_0$ weakly in $W^{1,2}(B_1)$ for some $v_0 \in W^{1,2}(B_1)$. Then $v_j\ra v_0$ strongly in $L^2(B_1)$, and $\nabla v_j \ra \nabla v_0$ weakly in $(L^2(B_1))^n$. Hence, passing to the limit in \eqref{eq:vj-apprx-false} and \eqref{eq:vj-apprx-false2} (with $r\in(0,\frac{1}{2})$ fixed) we obtain 
\begin{equation*}
\norm{v_0}_{W^{1,2}(B_1)} \leq 1,\quad\norm{\Delta v_0}_{W^{-1,2}(B_1)} = 0,\quad\text{and}
\end{equation*} 
\begin{equation*}
\sup_{r\in(0,\frac{1}{2})} \frac{ \norm{v_0}_{L^2(B_r)} }{ r^{\frac{n}{2} + d - 1} \theta(r) } \leq 1. 
\end{equation*} 
In particular, $v_0$ is a harmonic function in $B_1$, and in particular, due to the last observation, we must have 
\begin{equation}\label{eq:v0-apprx-false}
\max_{|\beta| \leq d-1} |\partial^\beta v_0 (0)| = 0. 
\end{equation} 

On the other hand, let $h_j$ be a harmonic function on $B_1$ such that $h_j - v_j \in W_0^{1,2}(B_1)$. Then one can deduce from \eqref{eq:vj-apprx-false} and the Poincar\'e inequality that
\begin{equation}\label{eq:hj-apprx-false0}
\norm{h_j - v_j}_{W^{1,2}(B_1)} \leq c_1 \norm{\nabla (h_j - v_j)}_{L^2(B_1)} \leq c_1 \norm{v_j}_{W^{-1,2}(B_1)} \leq \frac{c_1}{j},  
\end{equation} 
for some constant $c_1 > 1$ depending only on $n$. Therefore, 
\begin{equation}\label{eq:hj-apprx-false}
\norm{h_j}_{W^{1,2}(B_1)} \leq \norm{v_j}_{W^{1,2}(B_1)} + \frac{c_1}{j} \leq 2c_1,
\end{equation}  
and the interior gradient estimate for harmonic functions implies that 
\begin{equation}\label{eq:hj-apprx-false2}
\max_{|\beta|\leq d} \sup_{x\in B_{1/2}} | \partial^\beta h_j(x)| \leq c_2, 
\end{equation} 
where $c_2>0$ depends only on $n$ and $d$. Hence, defining $\vp_j$ as the difference between $h_j$ and its $(d-1)$-th order Taylor polynomial at $0$, i.e.,
\begin{equation}\label{eq:vpj}
\vp_j (x) = h_j (x) - \sum_{|\beta|\leq d-1} \frac{\partial^\beta h_j(0)}{|\beta|!} x^\beta, 
\end{equation}
we can verify that $\vp_j$ is also a harmonic function on $B_1$, and \eqref{eq:harm-1} holds with $h = \vp_j$, and $\bar c = 2c_1 + c_3c_2$, where $c_3$ is a constant depending only on $n$ and $d$. In addition, owing to the initial hypothesis, \eqref{eq:harm-2} must hold with $h = \vp_j$ as well.

Note that \eqref{eq:hj-apprx-false} and \eqref{eq:hj-apprx-false2} yield compactness of sequence $\{h_j\}_{j=1}^\infty$, as well as $\{\vp_j\}_{j=1}^\infty$ in $W^{1,2}(B_1)\cap C^d(B_{1/2})$. Hence, after extracting a subsequence if necessary, we can assume without losing any generality that $h_j \ra h_0$ weakly in $W^{1,2}(B_1)$, and $\partial^\beta h_j \ra \partial^\beta h_0$ uniformly on $B_{1/2}$, for any $|\beta|\leq d$, for certain harmonic function $h_0 \in W^{1,2}(B_1)$. 

In particular, recalling that $h_j - v_j \in W_0^{1,2}(B_1)$ and $v_j \ra v_0$ weakly in $W^{1,2}(B_1)$, we deduce $h_0 = v_0$ on $B_1$. This along with \eqref{eq:v0-apprx-false} implies that $|\partial^\beta h_j(0)| \ra 0$ for any $|\beta|\leq d-1$. Therefore, one can deduce from \eqref{eq:hj-apprx-false0} and \eqref{eq:vpj} that for any sufficiently large $j$, 
\begin{equation*}
\begin{split} 
\norm{\vp_j - v_j}_{W^{1,2}(B_1)} &\leq \norm{h_j - v_j}_{W^{1,2}(B_1)} + \norm{\vp_j - h_j}_{W^{1,2}(B_1)} \\
&\leq \frac{c_1}{j} + c_4 \max_{|\beta|\leq d-1} |\partial^\beta h_j(0)| \\
& < \e_0, 
\end{split} 
\end{equation*} 
with a constant $c_4$ depending at most on $n$ and $d$. Hence, we arrive at a contradiction to \eqref{eq:harm-2} with $h = \vp_j$. Thus, there must exist some $\delta$, depending only on $n$, $d$, $\theta$ and $\bar c$; recalling that $\bar c = 2c_1 + c_3c_2$, and tracking down the dependence of $c_1$, $c_2$ and $c_3$, we conclude that the dependence of $\delta$ is restricted to $n$, $d$ and $\theta$, as desired. 
\end{proof} 

Now we are ready to present the point-wise $C^d$-approximation for a weak solution to \eqref{eq:main-sing} around a vanishing point with order at least $d-1$. As in the previous section, we shall prove it in a slightly more general context. 

\begin{proposition}\label{proposition:v-Cd}
Given a Dini modulus of continuity $\omega$, and an integer $d\in\N$, there exist constants $\rho\in(0,\frac{1}{4})$ and $C>0$, depending only on $n$, $\omega$ and $d$, such that if $v\in W^{1,2}(B_1)$ satisfies 
\begin{equation}\label{eq:v-d-0}
\lim_{r\ra 0} \frac{\norm{v}_{L^2(B_r)}}{r^{\frac{n}{2} + d - 1}} = 0, \quad\text{and}
\end{equation} 
\begin{equation}\label{eq:v-pde-d}
\max\left\{ \norm{v}_{W^{1,2}(B_1)} , \sup_{r\in(0,1)} \frac{\norm{\Delta v}_{W^{-1,2}(B_r)}  }{\omega(\rho r) ( \norm{\nabla v}_{L^2(B_r)} + r^{\frac{n}{2} + d - 1} )}\right\} \leq 1, 
\end{equation} 
then there is a homogeneous harmonic polynomial $P$ of degree $d$ such that 
\begin{equation}\label{eq:v-Cd}
\sup_{|\beta| = d}|\partial^\beta P| + \sup_{r\in(0,1)} \frac{\norm{v - P}_{L^2(B_r)} + r\norm{\nabla (v - P)}_{L^2(B_r)}}{r^{\frac{n}{2} + d}\omega_1(r)} \leq C_d,
\end{equation} 
where $C_d$ is a positive constant depending only on $n$, $\lambda$, $\omega$ and $d$, and $\omega_1$ is a modulus of continuity determined by $\omega$ only. 
\end{proposition}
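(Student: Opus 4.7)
The argument will mimic the Campanato-type iteration of Proposition \ref{proposition:v-C1}, with the affine correctors $l_k$ replaced by homogeneous harmonic polynomials $P_k$ of degree $d$, and with Lemma \ref{lemma:apprx-hom} playing the role there taken by the standard harmonic approximation. I first fix $\mu \in (0, 1/2)$ small enough that $c_d \mu \leq \omega(\mu)/(2\omega(1))$, where $c_d$ is the constant controlling the degree-$d$ Taylor remainder of harmonic functions, and then $\rho \in (0, 1/4)$ small enough that $\psi(\rho) := \sum_{k=0}^\infty \omega(\rho \mu^k)$ is as small as needed below. Both choices are available by the Dini condition on $\omega$, exactly as in the proof of Proposition \ref{proposition:v-C1}.

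Next, I construct inductively a sequence $\{P_k\}_{k \geq 0}$ of homogeneous harmonic polynomials of degree $d$, with $P_0 \equiv 0$, satisfying
\begin{equation*}
\frac{\|v - P_k\|_{L^2(B_{\mu^k})}}{\mu^k} + \|\nabla(v - P_k)\|_{L^2(B_{\mu^k})} \leq \mu^{k(n/2 + d - 1)}\, \frac{\omega(\rho \mu^k)}{\omega(\rho)}
\end{equation*}
together with $\sup_{|\beta| = d}|\partial^\beta(P_{k+1} - P_k)| \leq c_0\,\omega(\rho \mu^k)/\omega(\rho)$. At stage $m$, I rescale
\begin{equation*}
v_m(x) = \frac{\omega(\rho)}{\mu^{md}\,\omega(\rho \mu^m)}\,(v - P_m)(\mu^m x),
\end{equation*}
so that the induction forces $\|v_m\|_{W^{1,2}(B_1)} \leq 1$; harmonicity of $P_m$ combined with the coefficient bound $|D^d P_m| \leq c_0 \psi(\rho)/\omega(\rho)$ converts the right-hand condition in \eqref{eq:v-pde-d} into $\|\Delta v_m\|_{W^{-1,2}(B_1)} \leq C_*\,\psi(\rho)$. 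Moreover, the qualitative vanishing \eqref{eq:v-d-0} combined with the induction estimates telescoped across the dyadic scales $\mu^m, \mu^{m+1}, \ldots$ yields a single modulus of continuity $\theta$, depending only on $\omega$ and $d$, such that $\|v_m\|_{L^2(B_r)} \leq r^{n/2 + d - 1}\,\theta(r)$ uniformly in $m$ and $r \in (0,1)$.

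Armed with this uniformity, Lemma \ref{lemma:apprx-hom} (applied with $\varepsilon$ fixed in terms of $\mu$) supplies a harmonic function $h_m$ on $B_1$ whose $(d-1)$-jet at $0$ vanishes and with $\|v_m - h_m\|_{W^{1,2}(B_1)} \leq \varepsilon$, provided $\rho$ was chosen so that $C_*\,\psi(\rho) \leq \delta(\varepsilon, \theta, d)$. Letting $Q_m$ be the homogeneous degree-$d$ part of the Taylor expansion of $h_m$ at $0$, interior estimates on $h_m$ give $|D^d Q_m| \leq c_0$, and Taylor's theorem gives
\begin{equation*}
\|h_m - Q_m\|_{L^2(B_\mu)} + \mu\,\|\nabla(h_m - Q_m)\|_{L^2(B_\mu)} \leq c_d\,\mu^{n/2 + d + 1}.
\end{equation*}
Setting $P_{m+1} := P_m + (\omega(\rho \mu^m)/\omega(\rho))\, Q_m$ (using homogeneity of $Q_m$ to rescale back to $B_{\mu^{m+1}}$), and invoking the sub-multiplicative bound $\omega(1)\omega(\rho r) \geq \omega(\rho)\omega(r)$ together with the choice of $\mu$, verifies the induction at step $m+1$. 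Summing the increments yields a degree-$d$ limit polynomial $P$ with $|D^d P| \leq C_d$, and interpolating the dyadic bounds across $r \in (0,1)$ via the definition of $\omega_1$ from Remark \ref{remark:v-C1} produces \eqref{eq:v-Cd}.

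The principal obstacle is the generation of the uniform modulus $\theta$ needed to apply Lemma \ref{lemma:apprx-hom} consistently at every stage of the iteration; indeed Remark \ref{remark:apprx-hom} shows that no such $\delta$ can exist without an input modulus, so a merely qualitative hypothesis \eqref{eq:v-d-0} would be insufficient to invoke Lemma \ref{lemma:apprx-hom} directly. The resolution is that $\theta$ emerges from the induction itself: for any $r \in (0,1]$, one chooses $j \geq m$ with $\mu^{j-m+1} < r \leq \mu^{j-m}$ and combines the induction bounds on $\|v - P_i\|_{L^2(B_{\mu^i})}$ for $i \geq j$ with the polynomial bound $\|P_i - P_m\|_{L^2(B_{\mu^i})} \leq c_0(\psi(\rho)/\omega(\rho))\mu^{i(n/2+d)}$, while \eqref{eq:v-d-0} is used only to handle the base case $m = 0$; this produces a modulus $\theta$ built from $\omega$ alone that closes the iteration.
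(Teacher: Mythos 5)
Your overall skeleton --- iterating with homogeneous degree-$d$ harmonic correctors $P_k$, invoking Lemma \ref{lemma:apprx-hom} at each stage, extracting the degree-$d$ Taylor part $Q_m$ of the approximating harmonic function, and summing the increments --- matches the paper's proof. But the step you yourself identify as the principal obstacle, namely producing a uniform modulus $\theta$ with which to invoke Lemma \ref{lemma:apprx-hom}, is not actually resolved by your argument, and this is exactly where the paper's proof contains an idea your proposal lacks.

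There are two concrete problems. First, the base case: \eqref{eq:v-d-0} is purely qualitative, so at $m=0$ it only yields $\norm{v}_{L^2(B_r)} \leq r^{\frac{n}{2}+d-1}\theta_v(r)$ for a modulus $\theta_v$ depending on the particular solution $v$. Feeding $\theta_v$ into Lemma \ref{lemma:apprx-hom} makes $\delta$, and hence $\rho$, depend on $v$, destroying the uniformity of the constants asserted in the proposition (and Remark \ref{remark:apprx-hom} shows the dependence on $\theta$ cannot be removed). Second, and more seriously, your telescoping for general $m$ is circular: to bound $\norm{v_m}_{L^2(B_r)}$ for small $r$ you invoke the induction bounds on $\norm{v-P_i}_{L^2(B_{\mu^i})}$ for $i \geq j \geq m$, i.e., at scales \emph{below} $\mu^m$ --- but at stage $m$ of the construction those are precisely the statements not yet established; they are the output of the very steps you are trying to perform. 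The paper's resolution is an outer induction on the degree $d$: assuming the proposition for degree $d-1$ (with base case Proposition \ref{proposition:v-C1}), one applies it to $v_m$, whose right-hand side satisfies the degree-$(d-1)$ smallness hypothesis after shrinking $\rho$, obtaining a homogeneous degree-$(d-1)$ polynomial $P'$ approximating $v_m$ to order $r^{\frac{n}{2}+d-1}\omega_1(r)$; the vanishing $\norm{v_m}_{L^2(B_r)} = o(r^{\frac{n}{2}+d-1})$ inherited from \eqref{eq:v-d-0} then forces $P'=0$, giving $\norm{v_m}_{L^2(B_r)} + r\norm{\nabla v_m}_{L^2(B_r)} \leq C' r^{\frac{n}{2}+d-1}\omega_1(r)$ at \emph{all} scales, i.e., the uniform modulus $\theta = C'\omega_1$ determined by $\omega$ and $d$ alone. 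Without this degree induction (or some substitute producing quantitative decay of $v_m$ at all scales from universal data), your iteration cannot close.
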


\begin{remark}\label{remark:v-Cd}
Modulus of continuity $\omega_1$ is given as the same as in Remark \ref{remark:v-C1}.
\end{remark} 

\begin{proof}[Proof of Proposition \ref{proposition:v-Cd}]
We are going to prove this lemma by an induction argument with respect to order $d$. Note that the initial case $d=1$ is already established in Proposition \ref{proposition:v-C1}. Henceforth, given any $d\in\N$ with $d\geq 2$, we shall assume that this lemma is true for order $d-1$, for certain constants $\rho' \in (0,\frac{1}{2})$ and $C'>0$, depending only on $n$, $\omega$ and $d-1$, and we will prove that it continues to hold with order $d$. 

Let $\rho\in(0,\rho']$, $\mu\in (0,\frac{1}{2})$ and $\bar{c} \geq 1$ be some constants to be determined, by $n$, $\omega$, and $d$ only. We are going to construct a sequence $\{P_k\}_{k=0}^\infty$ of homogeneous harmonic polynomials of degree $d$ such that for each $k\in\N\cup\{0\}$, we have 
\begin{equation}\label{eq:v-Pk-W12}
\norm{v -P_k}_{L^2(B_{\mu^k})} + \mu^k \norm{\nabla (v - P_k)}_{L^2(B_{\mu^k})} \leq \frac{\mu^{k(\frac{n}{2} + d)}\omega(\rho\mu^m)}{\omega(\rho)},\quad\text{and}
\end{equation}
\begin{equation}\label{eq:Pk}
\sup_{|\beta| = d} |\partial^\beta P_k - \partial^\beta P_{k-1}| \leq \frac{c_0\mu^{k(\frac{n}{2} + d)} \omega(\rho\mu^m)}{\omega(\rho)},
\end{equation}  
with $P_{-1} = 0$, where $c_0>0$ depends only on $n$ and $d$. 

Set $p_0 = 0$, so that \eqref{eq:Pk} is trivial for $k=0$, and \eqref{eq:v-Pk-W12} holds for $k=0$ from the assumption $\norm{v}_{W^{1,2}(B_1)}\leq 1$. Now let us assume that these hypotheses are verified, with for all $k\leq m$, for some $m\in\N\cup\{0\}$ given, and let us attempt to prove them for $k= m+1$. 

Define $v_m \in W^{1,2}(B_1)$ by 
\begin{equation}\label{eq:vm-d}
v_m(x) = \frac{\omega(\rho)}{\mu^{dm} \omega(\rho\mu^m)} (v - P_m)(\mu^m x).
\end{equation}
Due to induction hypothesis \eqref{eq:v-Pk-W12} for $k=m$, we have $\norm{v_m}_{W^{1,2}(B_1)} \leq 1$. Moreover, since $P_m$ is harmonic, following the computation in \eqref{eq:vm-pde}, we can also deduce from \eqref{eq:v-pde-d} that
\begin{equation*}
\begin{split}
\norm{\Delta v_m}_{W^{-1,2}(B_r)} &= \frac{\omega(\rho)}{\mu^{(\frac{n}{2} + d-1)m} \omega(\rho\mu^m)} \norm{\Delta v}_{W^{-1,2}(B_{\mu^m r})} \\
&\leq \frac{\omega(\rho)\omega(\rho\mu^m r)}{\omega(\rho\mu^m)} \left( \frac{\norm{\nabla v}_{L^2(B_{\mu^m r})}}{\mu^{(\frac{n}{2} + d - 1)m}} + r^{\frac{n}{2} + d - 1}\right) \\
&\leq \omega(\rho \mu^m r) \norm{\nabla v_m}_{L^2(B_r)} + \omega(\rho)\left( \norm{\nabla P_m}_{L^2(B_r)} + r^{\frac{n}{2} + d - 1} \right).
\end{split}
\end{equation*}
Since \eqref{eq:Pk} holds for all $0\leq k\leq m$, we have $|D^d P_m|\leq c_0 \omega(\rho)^{-1}\psi(\rho)$, with $\psi(\rho) = \sum_{k=0}^\infty \omega(\rho \mu^k)$, and since $\omega(\rho) < \psi(\rho)$, we have
\begin{equation}\label{eq:vm-pde-d}
\begin{split}
\norm{\Delta v_m}_{W^{-1,2}(B_r)} &\leq \omega(\rho r)\norm{\nabla v_m}_{L^2(B_r)} + \left(c_0\sqrt{\beta_n} + 1\right) \psi(\rho) r^{\frac{n}{2} + d - 1} \\
& \leq c_1\omega(\rho r) \left( \norm{\nabla v_m}_{L^2(B_r)} + r^{\frac{n}{2} + d - 2}\right), 
\end{split}
\end{equation} 
for any $r\in(0,1)$, for certain $c_1>1$ satisfying $(c_0\sqrt{\beta_n} + 1) \psi(\rho) r \leq c_1 \omega(\rho r)$ for any $r\in(0,1)$; note that $c_1$ depends only on $n$, $c_0$, $\omega$ and $\rho$, hence on $n$, $\omega$ and $d$ only, as we are going to choose $\rho$ to be determined by $n$, $\omega$ and $d$ only.

Recall that we set our initial induction hypothesis as that this proposition holds for order $d-1$, with constants $\rho'$ and $C'$ depending only on $n$, $\omega$ and $d-1$. Hence, let us choose $\rho\in(0,\rho']$ sufficiently small such that $c_1\omega(\rho r) \leq \omega(\rho' r)$, with $c_1$ as above. 

Now in view of the second inequality in \eqref{eq:vm-pde-d} and $\norm{v_m}_{W^{1,2}(B_1)} \leq 1$, we deduce from the initial induction hypothesis that there is a homogeneous harmonic polynomial $P'$ of degree $d-1$ for which 
\begin{equation}\label{eq:vm-P'-W12}
\norm{v_m - P'}_{L^2(B_r)} + r \norm{\nabla (v_m - P')}_{L^2(B_r)} \leq C' r^{\frac{n}{2} + d - 1} \omega_1(r), 
\end{equation} 
for any $r\in(0,1)$. However, due to \eqref{eq:v-d-0}, we have $\norm{v_m}_{L^2(B_r)} = o(r^{\frac{n}{2} + d- 1})$ as $r\ra 0$ as well, which along with \eqref{eq:vm-P'-W12} implies that $P' = 0$, as $P'$ being a homogeneous polynomial of degree $d-1$. Thus, we have
\begin{equation}\label{eq:vm-L2-d1}
\norm{v_m}_{L^2(B_r)} + r \norm{\nabla v_m}_{L^2(B_r)} \leq C' r^{\frac{n}{2} + d - 1} \omega_1(r),
\end{equation} 
for any $r\in(0,1)$. 

With \eqref{eq:vm-L2-d1} and \eqref{eq:vm-pde-d} at hand, we can employ Lemma \ref{lemma:apprx-hom} with $v = v_m$, and $\theta = C'\omega_1$. Hence, given $\e\in(0,1)$ to be determined later by $n$, $\omega$ and $d$, we can find a harmonic function $h \in W^{1,2}(B_1)$ such that 
\begin{equation*}
\max_{|\beta|\leq d-1} |\partial^\beta h(0)| = 0,\quad \norm{h}_{W^{1,2}(B_1)} \leq C_1\quad\text{and}
\end{equation*}
\begin{equation}\label{eq:vm-h-W12}
\norm{ v_m - h }_{W^{1,2}(B_1)} \leq \e,
\end{equation} 
with $C_1$ depending only on $n$ and $d$, provided that we choose $\rho$ sufficiently small so as to satisfy $c_1 \omega(\rho) \leq \delta$, with $\delta$ being the small constant chosen as in Lemma \ref{lemma:apprx-hom}, depending only on $n$, $\omega$, $d$, $\theta$ and $\e$, so on $n$, $\omega$ and $d$ only. 

The rest of the proof follows the standard argument. Since $h$ is a harmonic function on $B_1$, we have $\norm{h}_{W^{d+1,\infty}(B_{1/2})} \leq C_2 \norm{h}_{L^2(B_1)} \leq C_2C_1$, where $C_2>0$ depends only on $n$ and $d$. As $h$ vanishing with order at least $d-1$ at the origin, the $d$-th order Taylor polynomial, $Q_m$, of $h$, should satisfy $|\partial^\beta Q_m (0)| = 0$ for any $|\beta|\leq d-1$, and $\partial^\beta Q_m = \partial^\beta h(0)$ for any $|\beta| = d$. Then $Q_m$ is a homogeneous harmonic polynomial of degree $d$, and with $\mu\in (0,\frac{1}{2})$, 
\begin{equation}\label{eq:h-Qm-W12-d}
\norm{Q_m}_{L^\infty(B_1)} + \frac{\norm{h - Q_m}_{L^\infty(B_\mu)} + \mu \norm{\nabla ( h - Q_m)}_{L^\infty(B_\mu)} }{\mu^{\frac{n}{2} + d+1}} \leq C_3, 
\end{equation} 
where $C_3>0$ depends only on $n$ and $d$. Hence, it follows from \eqref{eq:vm-h-W12} and \eqref{eq:h-Qm-W12-d} that
\begin{equation}\label{eq:vm-Qm-L2-d}
\norm{ v_m - Q_m }_{L^2(B_\mu)} \leq \e + C_3\sqrt{\beta_n} \mu^{\frac{n}{2} + d + 1} \leq \frac{\mu^{\frac{n}{2} + d}\omega(\mu)}{2\omega(1)},
\end{equation}
and similarly,
\begin{equation}\label{eq:Dvm-DQm-L2-d}
\norm{ \nabla (v_m - Q_m) }_{L^2(B_\mu)} \leq  \frac{\mu^{\frac{n}{2} + d - 1}\omega(\mu)}{2\omega(1)},
\end{equation} 
by first selecting $\mu$ and $\e$ sufficiently small such that
\begin{equation*}
4C_3\sqrt{\beta_n} \omega(1) \mu \leq \omega(\mu),
\end{equation*} 
and selecting $\e$ small accordingly so as to satisfy
\begin{equation*}
4\omega(1) \e \leq \mu^{\frac{n}{2} + d} \omega(\mu). 
\end{equation*}
Clearly, $\mu$ and $\e$ are determined by $n$, $\omega$ and $d$ only, so is $\rho$ as chosen above. 

Finally, we set
\begin{equation*}
P_{m+1} = P_m + \frac{\mu^{dm} \omega(\rho \mu^m)}{\omega(\rho)} Q_m\left( \frac{\cdot}{\mu^m}\right),
\end{equation*}
so that \eqref{eq:v-Pk-W12} and \eqref{eq:Pk} continue to hold for $k = m+1$, due to \eqref{eq:vm-Qm-L2-d}, \eqref{eq:Dvm-DQm-L2-d} and \eqref{eq:h-Qm-W12-d} with $c_0 = (d!)^{-1}C_3$. This finishes the proof. 
\end{proof} 

As in the previous section, we deduce a uniform $C^d$-approximation of $u^+ - \kappa(z) u^-$ at any interior point $z$, for a weak solution $u$ to \eqref{eq:main-sing}, as a corollary to Proposition \ref{proposition:v-Cd}. Again the key is the uniformly continuous dependence of the approximating homogeneous harmonic polynomials on the singular points. 

\begin{lemma}\label{lemma:int-Cd}
Under the setting of Theorem \ref{theorem:sing}, for each $d\in\N$ with $d\geq 2$, and every $z\in\Gamma_{d-1}(u)$, there exists a (unique) homogeneous $a_+(z)$-harmonic polynomial $P_z$, of degree $d$, such that
\begin{equation}\label{eq:phiz-Pz-W12}
\begin{split}
& \sup_{|\beta| = d} |\partial^\beta P_z| + \sup_{r\in (0,\frac{1}{4})} \frac{\norm{v_z - P_z}_{L^\infty(B_r(z))}}{r^d\omega_1(r)} \\
&\quad + \sup_{r\in (0,\frac{1}{4})} \frac{\norm{\nabla (v_z - P_z)}_{L^p(B_r(z))}}{r^{\frac{n}{p} + d} \omega_1(r)} \leq C_{d,p}\norm{u}_{L^2(B_1)} ,
\end{split}
\end{equation} 
where $v_z = u^+ - \kappa(z) u^-$, $C_p$ is a positive constant depending only on $n$, $\lambda$, $\omega$, $d$ and $p$, and $\omega_1$ is a modulus of continuity determined by $\omega$ only. Moreover, for any pair $(z,z')$ of distinct points in $\Gamma_{d-1}(u)\cap \overline{B_{1/2}}$, 
\begin{equation}\label{eq:DdPz-C}
\sup_{|\beta| = d}\frac{|\partial^\beta P_z - \partial^\beta P_{z'}|}{\omega_1(|z-z'|)} \leq C_d\norm{u}_{L^2(B_1)},
\end{equation}  
where $C_d$ is a positive constant depending only on $n$, $\lambda$, $\omega$ and $d$. 
\end{lemma}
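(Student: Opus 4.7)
The plan is to mimic the two-step argument used in Lemma \ref{lemma:int-C1} and Lemma \ref{lemma:int-C1-more}, now invoking Proposition \ref{proposition:v-Cd} at each vanishing point of sufficiently high order, and proceeding by induction on $d$. The base case $d=2$ is handled by Lemma \ref{lemma:int-C1}: for $z\in\Gamma_1(u)$, the affine approximant $l_z$ is identically zero, since $l_z(z)=v_z(z)=0$ and $\nabla l_z=Vu(z)=0$. Assume the lemma has been proved through order $d-1$, and fix $z\in\Gamma_{d-1}(u)$.

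The corrected function $v_z = u^+ - \kappa(z)u^-$ satisfies $\ddiv(a_+(z)\nabla v_z)=\ddiv F_z$ with $F_z=(a_+(z)-a_+)\nabla u^+ - (a_-(z)-a_-)\nabla u^-$. Combining \eqref{eq:a-Dini} with $|\nabla u|\le \lambda^{-2}|\nabla v_z|$ yields the pointwise bound $|F_z(x)|\le \omega(|x-z|)|\nabla u(x)|$, hence
\[
\norm{F_z}_{L^2(B_r(z))} \le C\omega(r)\norm{\nabla v_z}_{L^2(B_r(z))}.
\]
Applying the inductive hypothesis for order $d-1$ at $z$ forces the approximating $(d-1)$-th degree polynomial to vanish (otherwise its size would violate $z\in\Gamma_{d-1}(u)$), and this gives the improved decay $\norm{\nabla v_z}_{L^2(B_r(z))} \le C r^{n/2+d-2}\omega_1(r)$. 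After the standard linear change of variables mapping $a_+(z)$ to $I$, together with a suitable rescaling that absorbs the universal constants (choosing the scale $\sigma=\rho/C$ and then a $W^{1,2}$-normalization, as in Lemma \ref{lemma:int-C1}), the hypotheses \eqref{eq:v-d-0} and \eqref{eq:v-pde-d} of Proposition \ref{proposition:v-Cd} are both verified. Reverting the transformation produces a unique homogeneous $a_+(z)$-harmonic polynomial $P_z$ of degree $d$ satisfying the $L^2$-form of \eqref{eq:phiz-Pz-W12}. To upgrade from $L^2$ to $L^\infty$ and $L^p$, note that $v_z-P_z$ solves the same divergence equation as $v_z$ (since $P_z$ is $a_+(z)$-harmonic); with $F_z \in L^\infty_{\rm loc}$ by Theorem \ref{theorem:int-Lip} and $\norm{F_z}_{L^\infty(B_r(z))}\le C\omega(r)$, standard interior $L^\infty$- and $W^{1,p}$-estimates for constant-coefficient divergence-form operators complete the first part.

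For the continuous dependence \eqref{eq:DdPz-C}, a covering argument reduces to $|z-z'|\le 1/16$ with $z,z'\in\Gamma_{d-1}(u)\cap\overline{B_{1/2}}$. Set $r_0=2|z-z'|$. Since $P_z-P_{z'}$ is a polynomial in $x$ of degree $\le d$, the standard equivalence of norms on polynomials of bounded degree gives
\[
\sup_{|\beta|=d}|\partial^\beta P_z - \partial^\beta P_{z'}| \le C r_0^{-n/2-d}\norm{P_z-P_{z'}}_{L^2(B_{r_0}(z))}.
\]
Decomposing the right-hand side via the triangle inequality through $v_z$ and $v_{z'}$, and using $B_{r_0}(z)\subset B_{2r_0}(z')$, the $L^2$-approximations at $z$ and $z'$ each bound $\norm{v_z-P_z}_{L^2(B_{r_0}(z))}$ and $\norm{v_{z'}-P_{z'}}_{L^2(B_{r_0}(z))}$ by $C r_0^{n/2+d}\omega_1(r_0)$. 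The cross term $\norm{v_z-v_{z'}}_{L^2(B_{r_0}(z))}$ is handled by writing $v_z-v_{z'}=(\kappa(z')-\kappa(z))u^-$, using $|\kappa(z)-\kappa(z')|\le C\omega(|z-z'|)$, and inserting the a priori pointwise decay $|u(x)|\le C|x-z|^d\norm{u}_{L^2(B_1)}$ for $x\in B_{1/4}(z)$ (which follows from the $L^\infty$-approximation of $v_z$ by the homogeneous polynomial $P_z$ established above), yielding $\norm{v_z-v_{z'}}_{L^2(B_{r_0}(z))} \le C\omega(|z-z'|) r_0^{n/2+d}$. Since $\omega\le\omega_1$ and $\omega_1(r_0)\le C\omega_1(|z-z'|)$, the three pieces combine to give the required bound.

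The main obstacle is the very first step: reconciling the natural bound $\norm{F_z}_{L^2(B_r(z))}\le C\omega(r)\norm{\nabla v_z}_{L^2(B_r(z))}$ with the hypothesis \eqref{eq:v-pde-d} of Proposition \ref{proposition:v-Cd}, whose right-hand side carries the floor $r^{n/2+d-1}$. Without further information on $v_z$, the ratio in \eqref{eq:v-pde-d} blows up as $r\to 0$ for any $d\ge 2$; the induction is precisely what closes the gap, as the forced vanishing of the $(d-1)$-th order polynomial at $z\in\Gamma_{d-1}(u)$ supplies the missing decay of $\norm{\nabla v_z}_{L^2(B_r(z))}$ and makes the rescaling argument go through.
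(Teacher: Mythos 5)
Your proposal is correct and follows essentially the route the paper intends: the paper's proof of Lemma \ref{lemma:int-Cd} is literally ``the same as that of Lemma \ref{lemma:int-C1} and Lemma \ref{lemma:int-C1-more}'' with Proposition \ref{proposition:v-Cd} replacing Proposition \ref{proposition:v-C1}, and that is exactly the two-step scheme you carry out — verify the hypotheses of Proposition \ref{proposition:v-Cd} for $v_z$ after a change of coordinates and rescaling, upgrade from $L^2$ to $L^\infty$/$L^p$ by interior estimates, and prove \eqref{eq:DdPz-C} by comparing $P_z$ and $P_{z'}$ at scale $r_0\sim|z-z'|$, with the cross term $(\kappa(z')-\kappa(z))u^-$ controlled by the pointwise decay $|u(x)|\lesssim|x-z|^d$ (the degree-$d$ analogue of \eqref{eq:Dv1-Dv2-L2}).

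One correction to your closing paragraph: the ``main obstacle'' you describe is not an obstacle, because the term $r^{\frac{n}{2}+d-1}$ in \eqref{eq:v-pde-d} sits \emph{additively in the denominator}, so the bound $\norm{\Delta v_z}_{W^{-1,2}(B_r(z))}\leq C\omega(r)\norm{\nabla v_z}_{L^2(B_r(z))}$ already gives a ratio bounded by $C\omega(r)/\omega(\rho r)\leq C/\rho$, with no blow-up as $r\to 0$; the remaining constant is absorbed by applying the proposition with the Dini modulus $C\rho^{-1}\omega$ in place of $\omega$ (the ``suitable scaling argument''). The only hypothesis of Proposition \ref{proposition:v-Cd} that uses $z\in\Gamma_{d-1}(u)$ is \eqref{eq:v-d-0}, which is immediate from $|v_z|\leq\lambda^{-2}|u|$. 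The improved decay $\norm{\nabla v_z}_{L^2(B_r(z))}\lesssim r^{\frac{n}{2}+d-2}\omega_1(r)$ that you extract from the inductive hypothesis is correct, but it is not needed to invoke the proposition — indeed the proposition re-derives it internally (see the step forcing $P'=0$ in \eqref{eq:vm-P'-W12}). This is harmless redundancy, not a gap.
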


\begin{proof}
The proof is the same as that of Lemma \ref{lemma:int-C1} and Lemma \ref{lemma:int-C1-more}, so we omit the details. 
\end{proof} 

Due to this lemma, we have that $\Gamma_{d-1}(u)$ is closed, and contains $\cS_d(u)$ as a relatively open set. For the future reference, let us record it as a lemma, and also give a proof for the sake of completeness. 

\begin{lemma}\label{lemma:closed}
Under the setting of Theorem \ref{theorem:sing}, $\Gamma_{d-1}(u)$ is a closed set in $B_1$, and $\cS_d(u)$ is a relatively open set in $\Gamma_{d-1}(u)$. 
\end{lemma}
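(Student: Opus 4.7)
The plan is to read both assertions off the point-wise $C^d$-approximation produced in Lemma \ref{lemma:int-Cd}, which attaches to each $z\in\Gamma_{d-1}(u)\cap\overline{B_{1/2}}$ a homogeneous $a_+(z)$-harmonic polynomial $P_z$ of degree $d$, approximates $v_z=u^+-\kappa(z)u^-$ by $P_z$ with error $O(r^d\omega_1(r))$ on $B_r(z)$, and provides uniform $\omega_1$-continuity of the map $z\mapsto \partial^\beta P_z$ for $|\beta|=d$. A standard rescaling reduces both statements to the situation in which the relevant points lie in $\overline{B_{1/2}}$.

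For closedness of $\Gamma_{d-1}(u)$, fix a sequence $\{z_k\}\subset \Gamma_{d-1}(u)\cap\overline{B_{1/2}}$ converging to some $z_0\in\overline{B_{1/2}}$. The Lipschitz regularity of $u$ (Theorem \ref{theorem:int-Lip}) gives $u(z_0)=0$. By \eqref{eq:DdPz-C}, the coefficients $\{\partial^\beta P_{z_k}\}_{|\beta|=d}$ form a Cauchy sequence, so converge to coefficients defining a homogeneous polynomial $P_0$ of degree $d$ centered at $z_0$; continuity of $a_+$ and passage to the limit in $a_+^{ij}(z_k)\partial_{ij}P_{z_k}=0$ show that $P_0$ is $a_+(z_0)$-harmonic. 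Continuity of $\kappa$ together with Theorem \ref{theorem:int-Lip} gives $v_{z_k}\to v_{z_0}$ uniformly on compacts, and $P_{z_k}\to P_0$ uniformly on compacts. Passing to the limit in \eqref{eq:phiz-Pz-W12} yields $\|v_{z_0}-P_0\|_{L^\infty(B_r(z_0))}\le Cr^d\omega_1(r)$ for every $r\in(0,\tfrac14)$. Consequently $\|v_{z_0}\|_{L^\infty(B_r(z_0))}=O(r^d)$, and the scalar-multiple structure \eqref{eq:a-k} together with \eqref{eq:a-ellip} gives $|v_{z_0}|\ge \lambda^2|u|$, so $\|u\|_{L^2(B_r(z_0))}=O(r^{n/2+d})$, placing $z_0$ in $\Gamma_{d-1}(u)$.

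For relative openness of $\cS_d(u)$ inside $\Gamma_{d-1}(u)$, fix $z_0\in\cS_d(u)$. If $\sup_{|\beta|=d}|\partial^\beta P_{z_0}|$ vanished, then $P_{z_0}\equiv 0$ by homogeneity, and \eqref{eq:phiz-Pz-W12} would force $\|v_{z_0}\|_{L^2(B_r(z_0))}=O(r^{n/2+d}\omega_1(r))=o(r^{n/2+d})$, contradicting $z_0\in\cS_d(u)$. Hence $c_0:=\sup_{|\beta|=d}|\partial^\beta P_{z_0}|>0$. By \eqref{eq:DdPz-C}, there is a neighborhood $U$ of $z_0$ in $\Gamma_{d-1}(u)\cap\overline{B_{1/2}}$ on which $\sup_{|\beta|=d}|\partial^\beta P_z|\ge c_0/2$. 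For each $z\in U$, homogeneity of $P_z$ yields $\|P_z\|_{L^2(B_r(z))}\ge cr^{n/2+d}$ for some $c>0$ depending only on $c_0$ and $n$, while \eqref{eq:phiz-Pz-W12} gives $\|v_z-P_z\|_{L^2(B_r(z))}\le C r^{n/2+d}\omega_1(r)$ and $\|v_z\|_{L^2(B_r(z))}\le Cr^{n/2+d}$. Combining these with $\lambda^2|u|\le|v_z|\le\lambda^{-2}|u|$ shows $0<\limsup_{r\to 0}r^{-(n/2+d)}\|u\|_{L^2(B_r(z))}<\infty$, so $z\in\cS_d(u)$.

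The only real subtlety is bookkeeping in the limit $z_k\to z_0$: the polynomial $P_{z_k}$ is expanded around the moving base point $z_k$, so one must check that when re-expanded around $z_0$, the lower-order Taylor coefficients are controlled by $|z_0-z_k|$ (by homogeneity of $P_{z_k}$ and boundedness of its top coefficients) and hence vanish in the limit, leaving a limit polynomial that is genuinely homogeneous of degree $d$ around $z_0$. Once this is observed, the limit passage in \eqref{eq:phiz-Pz-W12} and the rest of the argument are entirely routine.
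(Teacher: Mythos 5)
Your proposal is correct, and the relative-openness half is essentially the paper's own argument: $z_0\in\cS_d(u)$ forces $|D^dP_{z_0}|>0$ via \eqref{eq:phiz-Pz-W12}, the uniform continuity \eqref{eq:DdPz-C} propagates this to a relative neighbourhood, and \eqref{eq:phiz-Pz-W12} converts the nondegeneracy of $P_z$ back into the two-sided growth bound defining $\cS_d(u)$. Where you genuinely diverge is the closedness of $\Gamma_{d-1}(u)$. The paper does not take limits at all: it argues by induction on $d$, taking as base case that $\Gamma_1(u)=\cS(u)=\{u=0\}\setminus\cN(u)$ is closed by Theorem \ref{theorem:fb-reg}, and then deducing that $\Gamma_d(u)=\Gamma_{d-1}(u)\setminus\cS_d(u)$ is closed purely from the relative openness of $\cS_d(u)$ in the (inductively) closed set $\Gamma_{d-1}(u)$, together with the decomposition $\Gamma_{d-1}(u)=\cS_d(u)\cup\Gamma_d(u)$ furnished by Lemma \ref{lemma:int-Cd}. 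Your route instead extracts a limit of the approximating polynomials along $z_k\to z_0$ and passes to the limit in \eqref{eq:phiz-Pz-W12} to read off the vanishing order at $z_0$ directly. Both work. Yours is self-contained given Lemma \ref{lemma:int-Cd} and does not invoke the regular-point theory, but it carries the burden you correctly flag at the end: the polynomials are homogeneous about moving base points, so the uniform convergence $P_{z_k}\to P_0$ and the comparison of $B_r(z_0)$ with $B_{r+|z_k-z_0|}(z_k)$ must be checked before taking limits in the supremum over $r$. The paper's induction trades that analytic bookkeeping for soft point-set topology, at the price of needing the base case from Theorem \ref{theorem:fb-reg} and the decomposition identity at each stage. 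One could view your limit argument as implicitly reproving a fragment of the continuity statement \eqref{eq:DdPz-C}; since that estimate is already available, either path is legitimate.
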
 

\begin{proof}
Recall from Theorem \ref{theorem:fb-reg} that $\{u=0\}$ is closed in $\Omega$, and $\cN(u)$, the regular part of $\{u=0\}$ defined as in \eqref{eq:reg}, is relatively open in $\{u=0\}$. Hence, $\cS(u) = \{u=0\}\setminus \cN(u)$ is a closed in $B_1$. Since $\Gamma_1(u) = \cS(u)$, we have proved that $\Gamma_{d-1}(u)$ is a closed set in $B_1$ for $d = 2$.

Now let $d\in\N$ be arbitrary number with $d\geq 2$, and assume that $\Gamma_{d-1}(u)$ is a closed set in $\Omega$. Given $z\in\Gamma_{d-1}(u)$, let $P_z$ be as in Lemma \ref{lemma:int-Cd}; note that after a standard scaling argument, one can obtain $P_z$ for any $z\in \Gamma_{d-1}(u)\cap B_1$. Suppose that $z\in\cS_d(u)$. Then according to \eqref{eq:phiz-Pz-W12}, $|D^d P_z| > 0$, and there exists a small neighbourhood $B_r(z)$ such that $|D^d P(z')| > 0$ for any $z' \in \Gamma_{d-1} (u) \cap B_r(z)$. Hence, again by \eqref{eq:phiz-Pz-W12}, $z' \in \cS_d(u)$ for all $z'\in \Gamma_{d-1}(u)\cap B_r(z)$, proving that $\cS_d(u)$ is relatively open in $\Gamma_{d-1}(u)$. 

Consequently, as $\Gamma_d(u) = \Gamma_{d-1}(u)\setminus \cS_d(u)$, $\Gamma_d(u)$ also becomes a closed set in $\Omega$. By the induction principle, the conclusion of the lemma follows. 
\end{proof}

Now we are in a position to analyse the structure of the  set, $\cS_d(u)$, of vanishing points of $u$ with order $d$. Note that $\cS_d(u)$ is a subset of $\Gamma_{d-1}(u)$ that consists precisely of the points $z$ satisfying $|D^d P_z| > 0$. 

As $P_z$ being a (nontrivial) homogeneous $a_+(z)$-harmonic polynomial of degree $d$, $\cS_d(P_z)$ is a linear subspace of $\R^n$ with dimension at most $n-2$, and $P_z$ is invariant under translation in $\cS_d(P_z)$ (c.f. see page 992 in \cite{Han}); it is the homogeneity of $P_z$ that makes $\cS_d(P_z)$ a linear subspace, and then the fact that $P_z$ is $a_+(z)$-harmonic yields $\dim(\cS_d(P_z))\leq n-2$. Following classical literature (c.f. \cite{Han}), we shall also decompose $\cS_d(P_z)$ into  
\begin{equation}\label{eq:sing-d-j}
\cS_d^j (u) = \{ z\in\cS_d(u): \dim(\cS_d(P_z)) = j \},\quad\text{for } j=0,1,\cdots,n-2. 
\end{equation} 

\begin{lemma}\label{lemma:delta}
Let $P$ be a nontrivial homogeneous polynomial of degree $d\geq 2$ on $\R^n$, with $\dim(\cS_d(P)) = j$. Then there exists a set  $\{\beta_l : l = 1,2,\cdots,n-j\}\subset (\N\cup\{0\})^d$ of multi-indices with $|\beta_l| = d-1$ such that matrix $(\partial_k\partial^{\beta_l} P)$ of dimension $n(n-j)$ has full rank. 
\end{lemma}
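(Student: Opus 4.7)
The plan is to recast the statement as a linear-algebra fact about the homogeneous polynomial $P$ and its directional derivatives. First I would set $L := \cS_d(P)$ and record the standard characterisation: since $P$ is homogeneous of degree $d$, a vector $y$ lies in $L$ if and only if $y\cdot \nabla P \equiv 0$ on $\R^n$ (equivalently, $P(\cdot + y) = P(\cdot)$). Indeed, if $y \in \cS_d(P)$ then $\partial^\beta P(y) = 0$ for every $|\beta|\leq d-1$, and expanding $P$ around $y$ by Taylor's formula collapses it to its degree-$d$ term, forcing translation invariance along $y$; the converse is immediate. In particular $L$ is a linear subspace of dimension $j$.

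Next I would introduce the set of constant vectors
\begin{equation*}
V := \left\{ \nabla (\partial^\beta P) : \beta \in (\N\cup\{0\})^n, \ |\beta| = d-1\right\} \subset \R^n,
\end{equation*}
noting that each $\partial^\beta P$ is a linear polynomial so its gradient is indeed a constant vector. The conclusion of the lemma is equivalent to the claim $\dim \mathrm{span}(V) = n-j$, because once this is known one simply selects any $n-j$ linearly independent elements $\nabla(\partial^{\beta_l} P)$ to form the rows of the matrix $(\partial_k \partial^{\beta_l} P)$, which then has full rank $n-j$.

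The heart of the argument is to show $\mathrm{span}(V) = L^\perp$. For the inclusion $\mathrm{span}(V) \subseteq L^\perp$: for any $v\in L$ and any $|\beta| = d-1$,
\begin{equation*}
v \cdot \nabla (\partial^\beta P) = \partial^\beta (v \cdot \nabla P) = 0,
\end{equation*}
since $v \cdot \nabla P \equiv 0$ by the characterisation of $L$. For the reverse inclusion I would argue $\mathrm{span}(V)^\perp \subseteq L$: if $v \cdot \nabla(\partial^\beta P) = 0$ for every $|\beta| = d-1$, then $\partial^\beta (v \cdot \nabla P) \equiv 0$ for all such $\beta$; but $v \cdot \nabla P$ is a homogeneous polynomial of degree $d-1$, and a homogeneous polynomial of degree $d-1$ whose every $(d-1)$-th partial derivative vanishes must be identically zero. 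Hence $v\cdot \nabla P \equiv 0$, i.e., $v \in L$. Combining the two inclusions yields $\dim \mathrm{span}(V) = n-j$, and extracting a basis completes the proof.

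I do not expect any serious obstacle, as the argument is essentially linear algebra once the translation-invariance characterisation of $\cS_d(P)$ is established. The only place requiring a moment of care is the reverse inclusion, where one must remember that the homogeneity of $v\cdot\nabla P$ is what turns ``all top-order partial derivatives vanish'' into ``the polynomial is zero''; this is precisely why the hypothesis $d\geq 2$ (ensuring $v\cdot\nabla P$ is a genuine homogeneous polynomial of positive degree) enters.
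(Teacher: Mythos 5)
Your proof is correct, but it takes a genuinely different and more streamlined route than the paper. The paper first normalises coordinates so that $\cS_d(P)$ becomes a coordinate subspace and $P$ depends only on the complementary $n-j$ variables, restricts attention to multi-indices supported on those variables, and then argues by contradiction: a left null vector $\nu$ of the resulting matrix would satisfy $\partial^{\beta}P\equiv 0$ on the line $L(\nu)$ for all $|\beta|=d-1$, and an iterated integration argument (descending the order of differentiation one step at a time, using $\partial^\gamma P(0)=0$ from homogeneity) upgrades this to $L(\nu)\subset\cS_d(P)$, contradicting the normalisation. You instead prove the clean duality statement $\operatorname{span}\{\nabla\partial^\beta P:|\beta|=d-1\}=\cS_d(P)^\perp$ directly from the translation-invariance characterisation $y\in\cS_d(P)\iff y\cdot\nabla P\equiv 0$ (which the paper also uses, citing Han), with the reverse inclusion handled by the elementary fact that a homogeneous polynomial of degree $d-1$ with vanishing top-order coefficients is zero. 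This avoids the coordinate change, the contradiction, and the iteration, and yields slightly more than the lemma asks (the exact dimension of the span rather than just a lower bound); the paper's version, in exchange, exhibits concretely which multi-indices can be chosen, namely those supported on a complement of $\cS_d(P)$. Both arguments rest on the same two facts (translation invariance along $\cS_d(P)$ and the constancy of $\nabla\partial^\beta P$ for $|\beta|=d-1$), so yours is an acceptable, arguably cleaner, substitute.
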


\begin{proof}
Since $\dim(\cS_d(P)) = j$ and $P$ is invariant under translation in $\cS_d(P)$, we can assume without loss of generality that $\cS_d(P) = \{0\}^{n-j}\times \R^{n-j}$, and $P$ is a function of $(x_1,\cdots,x_{n-j})$ only. Let $I$ be the set of all multi-indices $\beta$ with $|\beta| = d-1$ and $\beta\cdot e_i = 0$ for all $n-j+1 \leq i\leq n$, and write $N$ by the cardinality of $I$. Rewriting $I$ by $\{\beta_1,\cdots,\beta_N\}$, we can consider a $((n-j)\times N)$-dimensional matrix $A$ whose $(k,l)$-th entry is given by $\partial_k \partial^{\beta_l} P$, for any $k\in\{1,\cdots,n - j\}$ and $l \in \{1,\cdots,N\}$.  Note that $\partial_k \partial^{\beta_l} P$ is a constant for any $(k,l)$, as $P$ being a polynomial of degree $d$. 

This lemma will follow straightforwardly if matrix $A$ has full rank, as $N\geq n-j$. For this reason, let us assume towards a contradiction that the rank of $A$ is at most $n-j-1$. Then there must exist a nonzero vector $(\nu_1,\cdots,\nu_{n-j}) \in \R^{n-j}$ such that
\begin{equation*}
\nu_1\partial_1 \partial^{\beta_l} P + \cdots + \nu_{n-j} \partial_{n-j} \partial^{\beta_l} P = 0,
\end{equation*}
for any $l=1,\cdots,N$. However, since $\partial^\beta P$ is a linear function for any multi-index $\beta\in I$ (as $P$ being a polynomial of degree $d$), the above identity implies that with $\nu = (\nu_1,\cdots,\nu_{n-j},0,\cdots,0)\in\R^n$,
\begin{equation*}
\partial^{\beta_l} P(t\nu) = t \partial^{\beta_l} P(\nu) = 0,
\end{equation*} 
for any $l=1,\cdots,N$ and any $t\in\R$. From the definition of $I = \{\beta_1,\cdots,\beta_N\}$, we have proved that $|D^{d-1} P| = 0$ on line $L(\nu) = \{t\nu : t\in\R\}$. 

However, if $|D^i P| = 0$ on $L(\nu)$, for some $i\in\{1,\cdots,d-1\}$, then for any multi-index $\gamma$ with $|\gamma| = i-1$ and $\gamma \cdot e_i = 0$ for $n-j +1 \leq i\leq n$, we have 
\begin{equation}
\partial^\gamma P(t\nu) = t\int_0^1 \nu_i \partial_i \partial^\gamma P(ts\nu)\,ds = 0,
\end{equation} 
where we used $\partial^\gamma P (0) = 0$, which follows from the homogeneity of $P$. That is, $|D^{i-1} P | = 0$ on $L(\nu)$ as well.

Iterating this observation, we obtain that $|D^i P| = 0$ on $L(\nu)$ for any $i=0,1,\cdots,d-1$, proving that $L(\nu)\subset\cS_d(P)$, a contradiction to the earlier assumption that $\cS_d(P) = \{0\}^{n-j}\times \R^j$. Hence, the lemma is proved. 
\end{proof} 

To this end, we are ready to prove the second part Theorem \ref{theorem:sing}, concerning the structure of the singular set of weak solutions to \eqref{eq:main-sing}. Let us remark that with the aid of Lemma \ref{lemma:delta}, the proof only involves two fundamental tools from multivariable calculus and real analysis, which are Whitney's extension theorem and the implicit function theorem. Nevertheless, we obtain a more refined structure for the lower dimensional singular set, i.e., $\cS_d^j(u)$ with $0\leq j\leq n-3$, compared to the classical results, such as \cite{Han}. 

\begin{proof}[Proof of Theorem \ref{theorem:sing}]
The first part \eqref{eq:sing-decom} of this theorem concerning the integral vanishing order of singular points follows straightforwardly from Lemma \ref{lemma:int-Cd}, since the lemma yields $\Gamma_{d-1}(u) = \cS_d(u)\cap \Gamma_d(u)$, for any $d\in\N$ with $d\geq 2$, where $\Gamma_d(u)$ and $\cS_d(u)$ are defined as in \eqref{eq:zero-d} and \eqref{eq:sing-d}. . 

In order to prove the second part \eqref{eq:sing-decom-d}, let us first fix $d\in\N$ with $d\geq 2$. Let $K$ be a compact set contained in $B_1$. By Lemma \ref{lemma:closed}, $\Gamma_{d-1}(u)\cap K$ is also compact. Due to Lemma \ref{lemma:int-Cd}, one can construct a function $g\in C^{d,\omega_1}(\R^n)$, via Whitney's extension theorem \cite{whit,gla}, as in the proof of Theorem \ref{theorem:fb-reg}, such that 
\begin{equation}\label{eq:g}
\Gamma_{d-1}(u)\cap K \subset \bigcap_{i=0}^{d-1} \bigcap_{|\beta| = i}\{ |\partial^\beta g| = 0\},\quad\text{and}
\end{equation}
\begin{equation}\label{eq:Ddg}
\max_{|\beta| = d} |\partial^\beta g(z) - \partial^\beta P_z| = 0 \quad\text{for any }z\in\Gamma_{d-1}(u)\cap K,
\end{equation}
where $P_z$ is the (unique) homogeneous $a_+(z)$-harmonic polynomial of degree $d$ approximating $v_z = u^+ - \kappa(z) u^-$ in the sense of \eqref{eq:phiz-Pz-W12}. Let us also address a more recent work \cite{Fef09} by C. Fefferman concerning a general extension from $C^{d,\theta}(E)$ to $C^{d,\theta}(\R^n)$, for any compact set $E$ and any modulus of continuity $\theta$. 

Now let $z\in\cS_d^j(u)\cap K$, for some $j\in\{0,1,\cdots,n-2\}$, where $\cS_d^j(u)$ is given as \eqref{eq:sing-d-j}. Then $P_z$ is a non-trivial homogeneous polynomial with $\dim(\cS_d(P_z)) = j$. Hence, Lemma \ref{lemma:delta} along with \eqref{eq:Ddg} gives us a set $\{\beta_l :l=1,2,\cdots,n-j\}\subset (\N\cup\{0\})^d$ of multi-indices with $|\beta_l| = d-1$ such that the Jacobian of the vector-valued mapping $(\partial^{\beta_1} g, \cdots, \partial^{\beta_{n-j}} g): \R^n \ra \R^{n-j}$ has full rank at $z$. On the other hand, from \eqref{eq:g}, we have 
\begin{equation}\label{eq:g-re}
\Gamma_{d-1}(u)\cap K\subset \bigcap_{l=1}^{n-j} \{ \partial^{\beta_l} g = 0\}.
\end{equation} 
Since $\partial^{\beta_l} g \in C^{1,\omega_1}(\R^n)$ for any $l=1,\cdots,n-2$, it follows from the implicit function theorem that $\bigcap_{l=1}^{n-j} \{ \partial^{\beta_l} g = 0\}$ can be represented by a  $j$-dimensional $C^{1,\omega_1}$-graph in a small neighbourhood of $z$. 

Since the above argument works for any $z\in\cS_d^j(u)\cap K$, and since $K$ is compact and $\cS_d^j(u)$ is relatively open in $\Gamma_{d-1}(u)$, one can use a standard covering argument to conclude that $\cS_d^j(u)\cap K$ is contained on a countable union of $j$-dimensional $C^{1,\omega_1}$-manifolds. Finally, the proof is finished by setting $\cS^j(u) = \bigcup_{d=2}^\infty\cS_d^j(u)$. 
\end{proof}


\section{Doubling Property}\label{section:double}

In this section, we shall study the doubling property of a weak solution to \eqref{eq:main-sing}. The main purpose of this section is to obtain a uniform doubling property depending on the maximal vanishing order among all singular points, provided that the solution does not vanish at any point with infinite order. More precisely, we assert the following.

\begin{theorem}\label{theorem:double}
Assume that $a_+,a_- \in (C^{0,\omega}(B_1))^{n^2}$ satisfy \eqref{eq:a-ellip}, \eqref{eq:a-Dini} and \eqref{eq:a-k}. Let $u \in W^{1,2}(B_1)$ be a nontrivial weak solution to \eqref{eq:main-sing} and suppose, for some $d\in\N$, that $u$ vanishes on $\{u=0\}$ with order $d$ at most. Then there exists $r_{d,u} \in (0,\frac{1}{4})$, depending only on $n$, $\lambda$, $\omega$, $d$ and certain character of $u$, such that
\begin{equation}\label{eq:double}
\sup_{z\in \cB_{d,u}}\sup_{r\in(0,r_{d,u}]} \frac{\norm{u}_{L^2(B_{2r}(z))}}{\norm{u}_{L^2(B_r(z))}} \leq 2^{c_1d + c_2 + \frac{n}{2}}, 
\end{equation}
where $c_1>1$ is an absolute constant, $c_2>0$ is a constant depending only on $n$ and $\lambda$, and $\cB_{d,u} = \bigcup_{x\in \{u=0\}\cap \overline{B_{1/2}}} B_{r_{d,u}}(x)$. 
\end{theorem}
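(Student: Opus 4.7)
The plan is to convert the point-wise polynomial approximations at zeros of $u$ (Lemma \ref{lemma:int-C1} and Lemma \ref{lemma:int-Cd}) into local doubling estimates of the desired exponential-in-$d$ form, secure a uniform threshold radius via a stratified compactness argument, and extend from $\{u=0\}$ to the neighbourhood $\cB_{d,u}$ by a covering.

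First, fix $z_0 \in \{u=0\} \cap \overline{B_{1/2}}$ and let $k_0 = k(z_0) \in \{1,\ldots,d\}$ denote its exact vanishing order, so $z_0 \in \cS_{k_0}(u)$. By Lemma \ref{lemma:int-Cd} (or Lemma \ref{lemma:int-C1} when $k_0 = 1$) there is a homogeneous $a_+(z_0)$-harmonic polynomial $P_{z_0}$ of degree $k_0$, centred at $z_0$, with
\begin{equation*}
\norm{v_{z_0} - P_{z_0}}_{L^2(B_r(z_0))} \leq C_d\, r^{\frac{n}{2}+k_0}\, \omega_1(r)\, \norm{u}_{L^2(B_1)}, \qquad v_{z_0} = u^+ - \kappa(z_0) u^-,
\end{equation*}
for $r \in (0, 1/4)$. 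Being a nontrivial homogeneous polynomial of degree $k_0$, $P_{z_0}$ satisfies $\norm{P_{z_0}}_{L^2(B_r(z_0))} = \eta(z_0)\, r^{\frac{n}{2}+k_0}$ with $\eta(z_0) := \norm{P_{z_0}}_{L^2(B_1(z_0))} > 0$. Hence, for $r \leq r_\ast(z_0)$ chosen so that $C_d\, \omega_1(r)\, \norm{u}_{L^2(B_1)} \leq \eta(z_0)/2$, a triangle-inequality sandwich yields
\begin{equation*}
\frac{\norm{v_{z_0}}_{L^2(B_{2r}(z_0))}}{\norm{v_{z_0}}_{L^2(B_r(z_0))}} \leq 3 \cdot 2^{\frac{n}{2}+k_0} \leq 3 \cdot 2^{\frac{n}{2}+d}.
\end{equation*}
The pointwise equivalence $\lambda^2 |u| \leq |v_{z_0}| \leq \lambda^{-2} |u|$, implied by \eqref{eq:a-ellip} and \eqref{eq:a-k}, then transfers this doubling to $u$ with only an extra factor $\lambda^{-4}$, matching the target form $2^{c_1 d + c_2 + n/2}$ at the single zero $z_0$.

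To make $r_\ast(z_0)$ uniform in $z_0$, I would proceed by downward induction on the stratum order $k$. Since the top stratum is $\cS_d(u)$ (strictly higher orders are excluded by hypothesis), within $\cS_d(u) \cap \overline{B_{1/2}}$ every accumulation sequence $z_j \to z_\ast$ has $k(z_j) = k(z_\ast) = d$, and the uniformly continuous dependence \eqref{eq:DdPz-C} gives $\eta_d := \inf_{\cS_d(u) \cap \overline{B_{1/2}}} \eta > 0$ by a direct compactness/contradiction argument (otherwise $\eta(z_\ast) = 0$, contradicting $z_\ast \in \cS_d(u)$). Descending to stratum $k < d$: the delicate case is a sequence $z_j \in \cS_k(u)$ with $z_j \to z_\ast$ and $k(z_\ast) > k$; by Lemma \ref{lemma:closed}, $z_\ast$ lies in a higher stratum (inductively already handled) and $\eta(z_j) \to 0$ cannot be precluded. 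Here I deploy a two-scale argument: for $r \gtrsim |z_j - z_\ast|$, the inclusions $B_r(z_j) \subset B_{2r}(z_\ast)$ and $B_r(z_\ast) \subset B_{2r}(z_j)$ reduce doubling at $z_j$ to doubling at $z_\ast$, already secured inductively; for $r \ll |z_j - z_\ast|$, Taylor-expanding $P_{z_\ast}$ around $z_j$ yields the quantitative lower bound $\eta(z_j) \gtrsim \eta_{k(z_\ast)}\, |z_j - z_\ast|^{k(z_\ast)-k}$, so $r_\ast(z_j)$ shrinks only polynomially in $|z_j - z_\ast|$ and the doubling ratio at $z_j$ at the scale $r \leq r_\ast(z_j)$ is still bounded by $3 \cdot 2^{\frac{n}{2}+k} \leq 3 \cdot 2^{\frac{n}{2}+d}$.

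Finally, to extend doubling from $\{u=0\}$ to $\cB_{d,u}$: for $z \in B_{r_{d,u}}(x_0)$ with $x_0 \in \{u=0\}\cap\overline{B_{1/2}}$ and $r \in (0, r_{d,u}]$, the inclusions $B_{r/2}(x_0) \subset B_r(z) \subset B_{2r}(z) \subset B_{3r}(x_0)$ hold whenever $r \geq 2|z - x_0|$; iterating the doubling at $x_0$ a bounded number of times transfers the doubling to $z$ with only a constant inflation, absorbed into $c_2$. For the remaining range $r < 2|z - x_0|$ one shrinks $r_{d,u}$ below a fixed fraction of the threshold $r_\ast$ from the previous step, making that regime vacuous. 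The central obstacle is the two-scale compactness step above: a naive uniform lower bound on $\eta(z_0)$ genuinely fails whenever lower-order singular points accumulate at a strictly higher-order one, and the quantitative estimate $\eta(z_j) \gtrsim |z_j - z_\ast|^{k(z_\ast)-k}$, together with the doubling inherited inductively from higher strata, is what reconciles this failure with the desired uniform bound.
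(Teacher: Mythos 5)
Your opening step is sound: at a zero $z_0$ of exact order $k_0$, Lemma \ref{lemma:int-Cd} does give doubling of $v_{z_0}$ (hence of $u$, up to $\lambda^{-4}$) with constant $3\cdot 2^{\frac{n}{2}+k_0}$, but only on the range of scales $2r\le r_\ast(z_0)$ where $C_d\,\omega_1(r)\le \eta(z_0)/2$. The first genuine gap is in your two-scale stratification: when $z_j\in\cS_k(u)$ approaches a point $z_\ast$ of higher order, even granting your bound $\eta(z_j)\gtrsim|z_j-z_\ast|^{k(z_\ast)-k}$, the threshold is $r_\ast(z_j)\approx\omega_1^{-1}(c\,\eta(z_j))$, and for a general Dini modulus this is far smaller than any power of $|z_j-z_\ast|$ (e.g.\ $\omega_1(r)=(\log\frac{1}{r})^{-2}$ gives $r_\ast(z_j)\approx\exp(-c\,|z_j-z_\ast|^{-m/2})$). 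Hence the interval of scales $r_\ast(z_j)<r\lesssim|z_j-z_\ast|$ is covered by neither of your two regimes, and nothing in the proposal controls the doubling there. The bound $\eta(z_j)\gtrsim|z_j-z_\ast|^{k(z_\ast)-k}$ itself is also unproved: it requires relating $P_{z_j}$ to the Taylor expansion of $P_{z_\ast}$ and excluding tangential approach to the degeneracy set of $D^kP_{z_\ast}$, neither of which follows from Lemma \ref{lemma:int-Cd} or \eqref{eq:DdPz-C}.

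The second gap is fatal for the neighbourhood statement: for $z\in\cB_{d,u}$ with $z\ne x_0$, the regime $r<2|z-x_0|$ is the \emph{small-scale} regime and cannot be made vacuous by shrinking $r_{d,u}$ --- for every such $z$ the scales $r\in(0,\min\{2|z-x_0|,r_{d,u}\})$ are always present in $(0,r_{d,u}]$, and this is precisely where the theorem is hardest, since $z$ need not be a zero of $u$ and no non-degenerate vanishing polynomial approximation at $z$ is available. The paper's proof is built around exactly these two obstructions: a compactness argument (the claim \eqref{eq:double-claim} together with Lemma \ref{lemma:double-factor}) produces doubling at a \emph{single} uniform scale on each stratum, and then Lemma \ref{lemma:double} --- comparison with a harmonic replacement combined with the self-improving doubling exponent for harmonic functions (Lemma \ref{lemma:double-harm}, Hardt--Simon) --- propagates that one-scale doubling downward through \emph{all} smaller scales and outward to all centres in a full neighbourhood (Lemma \ref{lemma:double-unif}), the gain $\e$ in the exponent absorbing the Dini perturbation at each dyadic step. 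You need some substitute for this downward-propagation mechanism; the pointwise polynomial approximations alone do not reach the scales, nor the off-nodal centres, required by \eqref{eq:double}.
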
 

The following lemma is concerned with a doubling condition for harmonic functions, which is proved in Theorem A.3 in \cite{HS}. Here we shall present the statement and refer to \cite{HS} for the proof. 

\begin{lemma}[Theorem A.3 in {\cite{HS}}]\label{lemma:double-harm}
Let $h$ be a harmonic function in $B_R$. Then for any $\alpha$, $\beta$ with $\alpha>\beta>1$, and any $q\geq \frac{1}{2}$, with $q\not\in\N$, there exists $\e>0$, depending only on $\alpha$, $\beta$, and $\dist(q,\N)$, such that for any $r,s,t \in(0,R)$ with $r<s<t$ and $\frac{s}{r} , \frac{t}{s} \in [\alpha,\beta]$, if  
\begin{equation*}
\norm{h}_{L^2(B_t)} \leq \left( \frac{t}{s} \right)^{\frac{q + n}{2}} \norm{h}_{L^2(B_s)},
\end{equation*}
then 
\begin{equation*}
\norm{h}_{L^2(B_s)}  \leq \left( \frac{s}{r} \right)^{\frac{q - \e + n}{2}} \norm{h}_{L^2(B_r)}.
\end{equation*} 
\end{lemma}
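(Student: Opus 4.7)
The plan is to reduce the problem to the spherical harmonic expansion of $h$ and combine a log-convexity/monotonicity argument with a compactness-contradiction step to extract the strict $\e$-gain. Since $h$ is harmonic on $B_R$, write $h = \sum_{k=0}^\infty h_k$ where each $h_k$ is a homogeneous harmonic polynomial of degree $k$. Orthogonality of spherical harmonics of distinct degree on every sphere $\partial B_\rho \subset B_R$ decouples the ball $L^2$ norm as
\begin{equation*}
F(\rho) := \norm{h}_{L^2(B_\rho)}^2 = \sum_{k=0}^\infty c_k\,\rho^{2k+n}, \qquad c_k \geq 0,
\end{equation*}
so that the hypothesis and conclusion become $F(t)\leq (t/s)^{q+n}F(s)$ and $F(s)\leq (s/r)^{q-\e+n}F(r)$, respectively. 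The key structural fact is that $\xi \mapsto \log F(e^\xi)$ is a log-sum-exp of affine functions, hence convex on $(-\infty,\log R)$; equivalently, $M(\rho) := \rho F'(\rho)/F(\rho)$ is non-decreasing in $\rho$.

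Averaging the identity $d\log F(e^\xi) = M(e^\xi)\,d\xi$ over $[\log s,\log t]$, the hypothesis tells us that the mean of $M$ there is at most $q+n$. Monotonicity of $M$ yields $M(s)\leq q+n$ and hence $M(\rho)\leq q+n$ for every $\rho\leq s$. Integrating over $[\log r,\log s]$ reproduces the unimproved inequality $F(s)\leq (s/r)^{q+n}F(r)$, but without any gap.

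To upgrade this to a strict improvement, I would argue by compactness and contradiction. If no universal $\e>0$ works, pick a sequence of harmonic functions $h_j$ on $B_R$ and triples $r_j<s_j<t_j$ with $s_j/r_j,\,t_j/s_j\in[\alpha,\beta]$ for which the hypothesis holds but the conclusion fails with $\e_j\to 0$. Rescaling so $R=1$ and pinning down $t_j$ in a compact subset of $(0,1)$, and normalising $F_j(r_j)=1$, the log-convexity above gives a uniform $L^2$-bound on $h_j$ on a slightly smaller ball, and interior estimates for harmonic functions produce smooth sub-sequential convergence $h_j\to h_\infty$ in $C^\infty_{\mathrm{loc}}$. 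Passing to the limit, the hypothesis becomes an equality and the negated conclusion forces equality at the exponent $q+n$ on $[r,s]$ as well; log-convexity of $\xi\mapsto \log F_\infty(e^\xi)$ then pins down $F_\infty(\rho) = c\,\rho^{q+n}$ on the whole interval $[r,t]$. But $F_\infty(\rho) = \sum_k c_k^\infty\,\rho^{2k+n}$ is a non-negative combination of the distinct monomials $\{\rho^{2k+n}\}_{k\in\N\cup\{0\}}$, so agreement with $c\,\rho^{q+n}$ on an open interval forces $q+n = 2k_0+n$ for some $k_0\in\N\cup\{0\}$, i.e., $q\in 2(\N\cup\{0\})$. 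Combined with $q\geq \tfrac{1}{2}$ this places $q$ in $\{2,4,\dots\}\subset\N$, contradicting $q\notin\N$.

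The main obstacle is converting this soft existence of $\e$ into the quantitative dependence on $\dist(q,\N)$, $\alpha$, and $\beta$ asserted in the statement. The direct route is to replay the argument without contradiction: view $M(\rho)$ as a probabilistic mean of the frequency values $\{2k+n\}_{k\geq 0}$ with weights $c_k\rho^{2k+n}/F(\rho)$, and use that $\dist(q,\N)>0$ forces the supporting mass near the level $M(s)\leq q+n$ either to already lie quantitatively below $q+n$ (in which case monotonicity alone finishes) or to be split between two distinct frequencies straddling $q$. Because the weights $c_k\rho^{2k+n}/F(\rho)$ shift monotonically toward smaller $k$ as $\rho$ decreases by any factor in $[\alpha,\beta]$, the mean $M(\rho)$ drops on $[\log r,\log s]$ by an amount bounded from below in terms of $\dist(q,\N)$ and $\alpha$; feeding this quantitative drop back through the integral identity above produces the claimed $\e>0$.
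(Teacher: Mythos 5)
First, a point of reference: the paper does not prove this lemma at all --- it is quoted verbatim as Theorem A.3 of \cite{HS} and the proof is explicitly deferred to that paper. So your attempt is being measured against the classical Hardt--Simon argument, and your framework (expansion of $h$ into spherical harmonics, so that $F(\rho)=\norm{h}_{L^2(B_\rho)}^2=\sum_k c_k\rho^{2k+n}$ with $c_k\geq 0$, log-convexity of $\xi\mapsto\log F(e^\xi)$, and the monotone frequency $M(\rho)=\rho F'(\rho)/F(\rho)$) is exactly the right one and is in the spirit of \cite{HS}. The reduction of hypothesis and conclusion to statements about the averages of $M$ over $[\log s,\log t]$ and $[\log r,\log s]$ is correct, as is the identification of the obstruction $q\notin 2(\N\cup\{0\})$ in the rigidity case.

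The genuine gap is that your main argument --- the compactness/contradiction step --- cannot deliver the $\e$ asserted in the statement, and the replacement you sketch is precisely the part you do not carry out. The lemma demands $\e=\e(\alpha,\beta,\dist(q,\N))$, uniform over all admissible $q$; a contradiction argument with $q$ fixed only yields $\e=\e(q,\alpha,\beta)$, and if you let $q_j\to\infty$ along the contradicting sequence the normalisation breaks down (with $F_j(r_j)=1$ the negated conclusion forces $F_j(s_j)\geq\alpha^{q_j-\e_j+n}\to\infty$, so there is no uniform $L^2$ bound and no harmonic limit to extract), so the soft argument simply does not see the claimed dependence. (There is also a lesser technical slip: after rescaling one cannot ``pin $t_j$ in a compact subset'' while keeping the domain radius fixed, since $t_j$ may approach $R$; this is repairable because $F_j(t_j)$ sits on the favourable side of the hypothesis, so Fatou plus interior convergence on $\overline{B_s}$ suffices, but it should be said.) The quantitative route in your last paragraph is the correct mechanism --- writing $F(s)/F(r)=\bigl(\sum_k w_k(s)(s/r)^{-(2k+n)}\bigr)^{-1}$ with the probability weights $w_k(s)=c_ks^{2k+n}/F(s)$, using $M(s)\leq q+n$ from monotonicity, and exploiting that $q+n$ lies at distance at least $\dist(q,\N)$ from the lattice $\{2k+n\}$ --- but its heart is a uniform Jensen-gap (secant-line) estimate for the convex function $\lambda\mapsto(s/r)^{-\lambda}$ on that lattice: the worst distribution is the two-point one on the adjacent exponents straddling $q+n$, and the resulting gain depends only on $\dist(q,\N)$ and $\alpha$ (and an upper bound $\beta^2$ for $s/r$), uniformly in the base frequency. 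As written, this key inequality is asserted rather than proved, and since it is exactly what produces the stated dependence of $\e$, the proof is incomplete at its decisive point; completing that computation (or simply citing Theorem A.3 of \cite{HS}, as the paper does) would close the argument.
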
 

Now we assert that if a weak solution to a broken quasilinear PDE \eqref{eq:main-sing} enjoys doubling property at certain scale, then the property is satisfied for any small scale, provided that the coefficients from each side are almost constant.

\begin{lemma}\label{lemma:double}
Given any integer $d\in\N$ and any real $\gamma\in(0,\frac{1}{4})$, there exists a constant $\delta>0$, depending only on $n$, $\lambda$, $d$ and $\gamma$, such that if $a_+,a_-\in (L^\infty(B_1))^{n^2}$ satisfy \eqref{eq:a-ellip} and $\max\{\norm{a_+ - I}_{L^\infty(B_1)}, \norm{a_- - \kappa I}_{L^\infty(B_1)}\}\leq \delta$ for some real constant $\kappa\in[\lambda^2,\lambda^{-2}]$, and if  $u\in W^{1,2}(B_1)$ is a weak solution to \eqref{eq:main-sing} such that 
\begin{equation}\label{eq:double-initial}
\norm{v}_{L^2(B_1)} \leq 2^{d + \gamma + \frac{n}{2}} \norm{v}_{L^2(B_{1/2})},
\end{equation} 
with $v = u^+ - \kappa u^-$, then  
\begin{equation}\label{eq:double-inside}
 \norm{v}_{L^2(B_{1/2})} \leq 2^{d + \gamma+ \frac{n}{2}}  \norm{v}_{L^2(B_{1/4})}. 
\end{equation} 
\end{lemma}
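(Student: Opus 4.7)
The plan is to argue by compactness and contradiction, reducing to the harmonic setting treated in Lemma \ref{lemma:double-harm}. Suppose the assertion fails for some $d \in \N$ and $\gamma \in (0,\tfrac{1}{4})$: then there exist $\delta_j \to 0^+$, constants $\kappa_j \in [\lambda^2,\lambda^{-2}]$, coefficients $a_\pm^j$ with $\|a_+^j - I\|_{L^\infty(B_1)}, \|a_-^j - \kappa_j I\|_{L^\infty(B_1)} \leq \delta_j$, and weak solutions $u_j$ of \eqref{eq:main-sing} satisfying \eqref{eq:double-initial} but violating \eqref{eq:double-inside}. Setting $v_j = u_j^+ - \kappa_j u_j^-$ and normalising $\|v_j\|_{L^2(B_{1/2})} = 1$, I obtain $\|v_j\|_{L^2(B_1)} \leq 2^{d+\gamma+n/2}$ and $\|v_j\|_{L^2(B_{1/4})} < 2^{-(d+\gamma+n/2)}$.

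The first key step is the algebraic identity
\[
a_+^j \nabla u_j^+ - a_-^j \nabla u_j^- = a_+^j \nabla v_j + (\kappa_j a_+^j - a_-^j)\nabla u_j^-,
\]
which recasts the PDE for $u_j$ as $\ddiv(a_+^j \nabla v_j) = -\ddiv\bigl((\kappa_j a_+^j - a_-^j)\nabla u_j^-\bigr)$. Uniform ellipticity of the operator governing $u_j$, together with $|v_j| \geq \lambda^2|u_j|$ and Caccioppoli, furnish uniform $W^{1,2}_{\text{loc}}(B_1)$ bounds on $u_j$, hence on $v_j$. Up to subsequences, $\kappa_j \to \kappa_\infty$ and $v_j \to v_\infty$ weakly in $W^{1,2}_{\text{loc}}(B_1)$, strongly in $L^2_{\text{loc}}(B_1)$. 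Since $a_+^j \to I$ uniformly and $\|\kappa_j a_+^j - a_-^j\|_{L^\infty} \leq (\lambda^{-2}+1)\delta_j \to 0$ while $\|\nabla u_j^-\|_{L^2(B_r)}$ stays bounded for each $r < 1$, testing the identity above against $\phi \in C_c^\infty(B_1)$ lets me pass to the limit and conclude that $v_\infty$ is harmonic on $B_1$. Strong $L^2_{\text{loc}}$ convergence, supplemented by monotone convergence as $1-\eta \to 1$, then transfers the three norm bounds on $v_j$ to $v_\infty$.

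To close the argument, I apply Lemma \ref{lemma:double-harm} with $q := 2d + 2\gamma$. Because $\gamma \in (0,\tfrac{1}{4})$, we have $q \in (2d, 2d + \tfrac{1}{2})$, so $q \notin \N$ with $\dist(q,\N) = 2\gamma$, and $(q+n)/2 = d+\gamma+n/2$. Choosing any fixed $\alpha > \beta > 1$ with $2 \in [\beta,\alpha]$ and taking $r = 1/4$, $s = 1/2$, $t = 1$, the hypothesis $\|v_\infty\|_{L^2(B_t)} \leq (t/s)^{(q+n)/2}\|v_\infty\|_{L^2(B_s)}$ is precisely the bound on $\|v_\infty\|_{L^2(B_1)}$ that was transferred from $v_j$. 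The lemma supplies $\e = \e(\gamma) > 0$ with
\[
\|v_\infty\|_{L^2(B_{1/2})} \leq 2^{(q-\e+n)/2}\|v_\infty\|_{L^2(B_{1/4})} \leq 2^{-\e/2} < 1,
\]
contradicting $\|v_\infty\|_{L^2(B_{1/2})} = 1$.

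The principal obstacle I anticipate is verifying harmonicity of the limit $v_\infty$: this hinges on the algebraic identity cleanly isolating a ``principal'' term with coefficient $a_+^j$ converging uniformly to $I$ from an ``error'' term whose coefficient is $O(\delta_j)$ in $L^\infty$, combined with the uniform $L^2$ control on $\nabla u_j^-$ inherited from Caccioppoli. The other delicate point is the calibration $q = 2d + 2\gamma$: the restriction $\gamma < \tfrac{1}{4}$ in the hypothesis is exactly what keeps $q$ bounded away from $\N$, which in turn is precisely what Lemma \ref{lemma:double-harm} requires to deliver the strict improvement $\e > 0$ on which the contradiction relies.
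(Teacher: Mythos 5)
Your proof is correct, and it takes a genuinely different route from the paper's. The paper argues directly and quantitatively: it replaces $v$ by the harmonic function $h$ on $B_{1-\tau}$ with $h-v\in W_0^{1,2}(B_{1-\tau})$, bounds $\norm{h-v}_{W^{1,2}}$ by $C\delta\tau^{-1}2^{d+\gamma+n/2}\norm{v}_{L^2(B_{1/2})}$ using the doubling hypothesis, transfers the doubling inequality to $h$ with explicit multiplicative losses, applies Lemma \ref{lemma:double-harm} with $t=1-2\tau<R=1-\tau$, and transfers back, calibrating $\tau$, $\eta$ and $\delta$ so that all losses are absorbed by the gain $\e$; this yields an explicit, constructive $\delta$. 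You instead run a compactness argument: after normalising $\norm{v_j}_{L^2(B_{1/2})}=1$ (legitimate, since a counterexample with $v_j\equiv 0$ on $B_{1/2}$ is impossible), your algebraic identity $\ddiv(a_+^j\nabla v_j)=-\ddiv((\kappa_j a_+^j-a_-^j)\nabla u_j^-)$ is the same near-harmonicity structure the paper exploits, and the limit $v_\infty$ is harmonic with the three norm bounds transferred correctly (strong $L^2_{loc}$ convergence for the inner balls, monotone convergence for $B_1$); the non-strict limit inequality on $B_{1/4}$ still produces a contradiction because the harmonic lemma's strict gain gives $\norm{v_\infty}_{L^2(B_{1/2})}\leq 2^{-\e/2}<1$. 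Your calibration $q=2(d+\gamma)$ with $\dist(q,\N)=2\gamma$ matches the paper's choice and correctly explains the role of $\gamma<\frac14$. The trade-off is that your $\delta$ is non-constructive, whereas the paper's is effective; in exchange you avoid the $\tau$/$\eta$ bookkeeping entirely. One small point to tidy: you invoke Lemma \ref{lemma:double-harm} with $t=1=R$, while the lemma as stated requires $r,s,t\in(0,R)$. This is harmless here because $v_\infty\in L^2(B_1)$ and $\rho\mapsto\norm{v_\infty}_{L^2(B_\rho)}^2$ is a power series $\sum_k c_k\rho^{n+2k}$ with $c_k\geq 0$ valid up to $\rho=1$, so the statement extends to $t=R$; either say this explicitly or set up the contradiction on a slightly smaller ball.
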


\begin{proof}
Let $\tau\in(0,\frac{1}{8})$ be a sufficiently small real to be determined later, and let $h$ be the harmonic function on $B_{1-\tau}$ such that $h- v \in W_0^{1,2}(B_{1-\tau})$. In view of \eqref{eq:main-sing}, we realise that $h-v\in W_0^{1,2}(B_{1-\tau})$ is a weak solution to 
\begin{equation*}
\Delta (h - v) = \ddiv ( (a_+ - I)\nabla u^+ - (a_- - \kappa I)\nabla u^-).
\end{equation*} 
Hence, it follows from the closeness condition on $a_+$ and $a_-$ that 
\begin{equation}\label{eq:Dh-Dv-L2}
\norm{\nabla (h-v)}_{L^2(B_{1-\tau})} \leq \delta \norm{\nabla u}_{L^2(B_{1-\tau})} \leq \frac{c_0\delta}{\tau} \norm{u}_{L^2(B_1)},
\end{equation}
where the last inequality is deduced from the local energy estimate, as $u$ being a weak solution to a uniformly elliptic PDE; here $c_0$ is a constant depending only on $n$ and $\lambda$. 

Since $\kappa$ is a real constant satisfying $\lambda^2\leq \kappa\leq \lambda^{-2}$, we have $\norm{u}_{L^2(B_1)}\leq \lambda^{-2} \norm{v}_{L^2(B_1)}$. Inserting this inequality into \eqref{eq:Dh-Dv-L2}, and utilising the doubling condition \eqref{eq:double-initial}, we obtain 
\begin{equation}\label{eq:Dh-Dv-L2-re}
\norm{\nabla (h-v)}_{L^2(B_{1-\tau})} \leq \frac{c_1\delta}{\tau} 2^{d + \gamma + \frac{n}{2}} \norm{v}_{L^2(B_{1/2})},
\end{equation} 
where $c_1 = \lambda^{-2} c_0$. Recalling that $h-v\in W_0^{1,2}(B_{1-\tau})$, we can deduce from the Poincar\'e inequality that 
\begin{equation}\label{eq:h-v-L2}
\norm{h-v}_{L^2(B_{1-\tau})} \leq \frac{c_2c_1\delta}{\tau} 2^{d + \gamma + \frac{n}{2}} \norm{v}_{L^2(B_{1/2})},
\end{equation} 
where $c_2$ is a constant depending only on $n$. 

We can deduce from \eqref{eq:h-v-L2} and the doubling condition \eqref{eq:double-initial} that
\begin{equation*}
\begin{split}
\norm{h}_{L^2(B_{1-2\tau})} &\leq \norm{v}_{L^2(B_1)} +  \frac{c_2c_1\delta}{\tau} 2^{d + \gamma + \frac{n}{2}} \norm{v}_{L^2(B_{1/2})} \\
&\leq \left( 1+ \frac{c_2c_1\delta}{\tau}  \right) 2^{d + \gamma + \frac{n}{2}} \norm{v}_{L^2(B_{1/2})}.
\end{split}
\end{equation*} 
However, since we also have from \eqref{eq:h-v-L2} that 
\begin{equation*}
\begin{split}
\norm{h}_{L^2(B_{1/2})} &\geq \norm{v}_{L^2(B_{1/2})} - \norm{v- h}_{L^2(B_{1/2})} \\
&\geq \left(1 - \frac{c_2c_1\delta}{\tau} 2^{d + \gamma + \frac{n}{2}} \right) \norm{v}_{L^2(B_{1/2})},
\end{split} 
\end{equation*} 
combining the last two estimates, we arrive at
\begin{equation}\label{eq:h-doubling-1}
\norm{h}_{L^2(B_{1-2\tau})} \leq \left( \frac{ 1+ \frac{c_2c_1\delta}{\tau} }{1 - \frac{c_2c_1\delta}{\tau} 2^{d + \gamma + \frac{n}{2}} } \right)2^{d + \gamma + \frac{n}{2}}\norm{h}_{L^2(B_{1/2})}.
\end{equation} 

Given any $\eta\in(0,\frac{1}{8})$, one can find a sufficiently small $\delta>0$, depending only on $c_1$, $c_2$, $d$, $n$, $\gamma$, $\tau$ and $\eta$, hence on $n$, $\lambda$, $d$, $\gamma$ and $\eta$, such that
\begin{equation}\label{eq:delta}
\frac{ 1+ \frac{c_2c_1\delta}{\tau} }{1 - \frac{c_2c_1\delta}{\tau} 2^{d + \gamma + \frac{n}{2}} } \leq 2^{\frac{\eta}{4}}. 
\end{equation} 
On the other hand, we can also choose a small $\tau\in(0,\frac{1}{8})$, depending only on $d$, $\gamma$, $n$ and $\eta$, such that
\begin{equation}\label{eq:tau}
2^{d + \gamma + \frac{\eta}{4} + \frac{n}{2}} \leq (2-4\tau)^{d+ \gamma + \eta + \frac{n}{2}}.
\end{equation} 

To select $\eta$, let us find the constant $\e$ from Lemma \ref{lemma:double-harm} corresponding to $\alpha = \frac{3}{2}$, $\beta = 2$ and any $q$ satisfying $\dist(q,\N) \geq 2\gamma$. Clearly, such a constant $\e$ depends only on $\gamma$ and, in particular, it is independent of the choice of $q$, as long as it satisfies $\dist(q,\N)\geq 2\gamma$. Thus, we can choose $\eta\in(0,\min\{1-4\gamma,\frac{\e}{3}\})$, depending only on $\gamma$, such that with $q = 2(d + \gamma + \eta)$ (so that $\dist(q,\N)\geq 2\gamma$), we have 
\begin{equation}\label{eq:eta}
q - \e = 2(d+ \gamma + \eta) + 2\eta - \e < 2(d + \gamma) - \eta.
\end{equation} 

This shows that $\tau$ determined as in \eqref{eq:tau} now depends on $n$, $d$ and $\gamma$ only, so $\delta$ as in \eqref{eq:delta} depends on $n$, $\lambda$, $d$ and $\gamma$ only. Collecting \eqref{eq:delta} and \eqref{eq:tau}, we deduce from \eqref{eq:h-doubling-1} that
\begin{equation}\label{eq:h-doubling-2} 
\norm{h}_{L^2(B_{1-2\tau})} \leq (2-4\tau)^{d+ \gamma + \eta + \frac{n}{2}} \norm{h}_{L^2(B_{1/2})}.
\end{equation} 

Recalling the choice of $\eta$ (and $\e$) above, we can apply Lemma \ref{lemma:double} to $h$, with $R = 1 - \tau$, $t = 1-2\tau$, $s = \frac{1}{2}$, $r=\frac{1}{4}$, $\alpha = \frac{3}{2}$, $\beta = 2$ and $q = 2(d + \gamma + \eta)$, and obtain from \eqref{eq:eta} that
\begin{equation}\label{eq:h-doubling-3}
\norm{h}_{L^2(B_{1/2})} \leq 2^{d + \gamma - \eta + \frac{n}{2}} \norm{h}_{L^2(B_{1/4})}.
\end{equation} 

Finally, we can repeat as in the derivation of \eqref{eq:h-doubling-1}, and compute that
\begin{equation*}
\begin{split}
\norm{v}_{L^2(B_{1/2})} &\leq \norm{h}_{L^2(B_{1/2})} + \norm{v-h}_{L^2(B_{1/2})} \\
& \leq 2^{d + \gamma - \eta + \frac{n}{2}} \norm{h}_{L^2(B_{1/4})} + \frac{c_2c_1\delta}{\tau} 2^{d+ \eta + \frac{n}{2}} \norm{v}_{L^2(B_{1/2})} \\
& \leq 2^{d + \gamma - \eta + \frac{n}{2}} \norm{v}_{L^2(B_{1/4})} + \frac{c_2c_1\delta}{\tau} 2^{2d+ 3\eta + n} \norm{v}_{L^2(B_{1/2})},
\end{split} 
\end{equation*}
hence, arriving at 
\begin{equation*}
\norm{v}_{L^2(B_{1/2})} \leq \frac{2^{d + \gamma - \eta + \frac{n}{2}} }{1 -\frac{c_2c_1\delta}{\tau} 2^{2d+ 3\eta  + n} }\norm{v}_{L^2(B_{1/4})}. 
\end{equation*}  
With $\tau$ being fixed as in \eqref{eq:tau}, we can adjust $\delta$ to be slightly smaller, if necessary and without affecting its dependence on $n$, $\lambda$, $d$ and $\gamma$ (as $\eta$ depending on $\gamma$ only), such that 
\begin{equation*}
\norm{v}_{L^2(B_{1/2})} \leq 2^{d + \gamma + \frac{n}{2}} \norm{v}_{L^2(B_{1/4})}.
\end{equation*}
This finishes the proof. 
\end{proof} 

Now if our coefficient matrices, $a_+$ and $a_-$, are Dini continuous, and $v = u^+ - \kappa(0) u^-$ satisfies a doubling property at a sufficiently small scale, then one can iterate Lemma \ref{lemma:double} to justify that $v$ satisfies the same doubling property at any smaller scale. This also yields a uniform neighbourhood in which $u$ enjoys slightly weaker doubling property. 

\begin{lemma}\label{lemma:double-unif}
Given any $d\in\N$ and $\gamma\in(0,\frac{1}{4})$, there exists a constant $\rho_0\in(0,\frac{1}{32})$, depending only on $n$, $\lambda$, $\omega$, $d$ and $\gamma$, such that if $a_+,a_-\in (C^{0,\omega}(B_1))^{n^2}$ satisfy \eqref{eq:a-ellip}, \eqref{eq:a-Dini} and \eqref{eq:a-k}, and if $u\in W^{1,2}(B_1)$ is a weak solution to \eqref{eq:main-sing} such that 
\begin{equation}\label{eq:double-asmp}
\norm{u}_{L^2(B_{16\rho})} \leq 2^{d + \gamma + \frac{n}{2}} \norm{u}_{L^2(B_{8\rho})},
\end{equation} 
for some $\rho\in (0,\rho_0]$, then 
\begin{equation}\label{eq:double-unif}
\sup_{z\in B_\rho} \sup_{r\in(0,4\rho]} \frac{\norm{u}_{L^2(B_{2r}(z))}}{\norm{u}_{L^2(B_r(z))}} \leq 2^{c_1d + c_2 + \frac{n}{2}}, 
\end{equation}
where $c_1$ is an absolute constant, while $c_2$ depends only on $n$ and $\lambda$.
\end{lemma}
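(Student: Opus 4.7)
My plan is to iterate Lemma \ref{lemma:double} on dyadic balls of shrinking radii, first at the origin and then at an arbitrary point $z\in B_\rho$, converting between $u$ and the corrected function $v_z = u^+ - \kappa(z) u^-$ only at the endpoints of each iteration so as to avoid exponent inflation.

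\textbf{Step 1 (uniform doubling at the origin).} I would first rescale $B_{16\rho}$ to $B_1$ and apply a linear change of coordinates transforming $a_+(0)$ to $I$; since $a_-=\kappa a_+$ with $a_+$ and $\kappa$ Dini continuous and $\kappa(0)\in[\lambda^2,\lambda^{-2}]$, the transformed coefficients satisfy
\begin{equation*}
\max\bigl\{ \norm{\tilde a_+ - I}_{L^\infty(B_1)},\, \norm{\tilde a_- - \kappa(0) I}_{L^\infty(B_1)}\bigr\} \leq C_\lambda\,\omega(16\rho).
\end{equation*}
The hypothesis \eqref{eq:double-asmp}, combined with the pointwise bound $\lambda^2|u|\leq|v_0|\leq\lambda^{-2}|u|$, yields $\norm{v_0}_{L^2(B_{16\rho})}\leq 2^{d_0+\gamma+n/2}\norm{v_0}_{L^2(B_{8\rho})}$ for an integer $d_0\leq d+4\log_2(1/\lambda)+2$. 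Choosing $\rho_0$ so small that $C_\lambda\,\omega(16\rho_0)\leq\delta(n,\lambda,d_0,\gamma)$, with $\delta$ as in Lemma \ref{lemma:double}, that lemma applies and shrinks the ball by a factor of two. Because the closeness-to-constant condition only improves at smaller scales, the iteration continues indefinitely with the exponent $d_0+\gamma$ preserved, producing dyadic doubling for $v_0$ at every scale $\leq 8\rho$. Interpolating across consecutive dyadic radii and converting back to $u$ gives uniform doubling at the origin,
\begin{equation*}
\norm{u}_{L^2(B_{2r})}\leq C_0\,\norm{u}_{L^2(B_r)}\quad\text{for all }r\leq 4\rho,
\end{equation*}
with $\log_2 C_0 \leq 2d+C_1(n,\lambda)$.

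\textbf{Step 2 (transfer to $z\in B_\rho$ and iterate).} Exploiting the inclusions $B_{2\rho}\subset B_{3\rho}\subset B_{4\rho}(z)$ and $B_{8\rho}(z)\subset B_{9\rho}\subset B_{16\rho}$, the hypothesis \eqref{eq:double-asmp} together with the origin doubling from Step 1 combine to give
\begin{equation*}
\norm{u}_{L^2(B_{8\rho}(z))}\leq 2^{d+\gamma+n/2}\,C_0^2\,\norm{u}_{L^2(B_{4\rho}(z))},
\end{equation*}
which in $v_z$-form reads $\norm{v_z}_{L^2(B_{8\rho}(z))}\leq 2^{d_1+\gamma+n/2}\norm{v_z}_{L^2(B_{4\rho}(z))}$ for an integer $d_1\leq c_1'\,d+C_2(n,\lambda)$ with $c_1'$ an absolute constant. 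After shrinking $\rho_0$ further so that $C_\lambda\,\omega(8\rho_0)\leq\delta(n,\lambda,d_1,\gamma)$, I would rerun the Step 1 iteration centered at $z$, with $\kappa=\kappa(z)$ held fixed throughout, and then interpolate and convert back to $u$ to obtain \eqref{eq:double-unif}, with $c_1=2c_1'$ absolute and $c_2$ depending only on $n$ and $\lambda$ after absorbing $\gamma\in(0,\tfrac14)$.

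\textbf{Main obstacle.} The crucial point is to prevent the doubling exponent from blowing up during iteration: each conversion $u\leftrightarrow v$ injects an additive $O(\log_2(1/\lambda))$ into the exponent and each interpolation between dyadic radii doubles it, so performing a conversion at every dyadic step would yield a coefficient $c_1$ depending on $n$ and $\lambda$ rather than being absolute. The key observation enabling the plan above is that Lemma \ref{lemma:double}, expressed purely in terms of $v$, preserves the exponent exactly; consequently the iteration inside each step can be carried out without any conversion, and only two conversions together with two interpolations occur in the entire argument. A secondary subtlety is that the constant $\delta$ in Lemma \ref{lemma:double} depends on the doubling exponent, so $\rho_0$ must be chosen only after the augmented exponents $d_0$ and $d_1$ have been bounded in terms of $d$.
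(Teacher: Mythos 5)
Your proposal is correct and follows essentially the same route as the paper: convert the hypothesis into a doubling inequality for the corrected function, iterate Lemma \ref{lemma:double} dyadically (first at the origin, then at an arbitrary $z\in B_\rho$ after transferring the doubling via ball inclusions), interpolate between dyadic radii, and choose $\rho_0$ only after the augmented exponents have been bounded in terms of $d$. The only cosmetic difference is that you route the origin-to-$z$ transfer through $u$ itself, whereas the paper compares $v_0$ and $v_z$ directly using $|\kappa(z)-\kappa(0)|\lesssim\omega(\rho)$; both conversions cost only additive constants absorbed into $c_2$, and your accounting of why $c_1$ stays absolute matches the paper's.
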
 

\begin{proof}
For the matter of simplicity, let us assume that $a_+ (0) = I$, which along with \eqref{eq:a-k} implies $a_-(0) = \kappa(0) I$ as well. Let us also denote $v_0 = u^+ - \kappa(0) u^-$ simply by $v$. Due to \eqref{eq:a-ellip} and \eqref{eq:a-k}, we have $\kappa(0) \in [\lambda^2,\lambda^{-2}]$, we obtain from \eqref{eq:double-asmp} that 
\begin{equation}\label{eq:v-double-1}
\norm{v}_{L^2(B_{16\rho})} \leq \frac{ 2^{d + \gamma + \frac{n}{2}} }{\lambda^4} \norm{v}_{L^2(B_{r_1})} \leq 2^{d + m_0 + \gamma + \frac{n}{2}}\norm{v}_{L^2(B_{8\rho})},
\end{equation}
where $m_0 = \min\{l\in \N: l \geq - \frac{4\log \lambda}{\log 2}\}$. 

Now fix $\rho_0$ as a small constant satisfying $\omega(16\rho_1) \leq \delta$, where $\delta$ is selected as in Lemma \ref{lemma:double} with $d$ replaced by $d + m_0$ and with the given $\gamma$. Clearly, $\rho_0$ depends on $n$, $\lambda$, $\omega$, $d$ and $\gamma$ only. Henceforth, we assume that $\rho \in (0,\rho_1]$. 

Given any integer $k\in\N\cup\{0\}$, let $u_k,v_k : B_1\ra\R$ be defined by $u_k (x) = u(2^{4-k}\rho x)$ and $v_k (x)= v(2^{4-k}\rho x) = u_k^+(x) - \kappa(0) u_k^-(x)$. In view of \eqref{eq:main-sing}, $u_k\in W^{1,2}(B_1)$ is a weak solution to $\ddiv(a_{k,+} \nabla u_k^+ - a_{k,-} \nabla u_k^- ) = 0$, with $a_{k,\pm} : B_1 \ra \R^{n^2}$ defined by $a_{k,\pm} (x) = a_\pm(2^{4-k}\rho x)$. It is evident that $a_{k,+},a_{k,-} \in (C^{0,\omega}(B_1))^{n^2}$ satisfy \eqref{eq:a-ellip}. Moreover, it follows immediately from \eqref{eq:a-Dini}, \eqref{eq:a-k} and the choice of $\rho_0$ that $\norm{a_{k,+} - I}_{L^\infty(B_1)} \leq \omega(2^{4-k}\rho) \leq \omega(16\rho_0) \leq\delta$, and similarly $\norm{a_{k,-} - \kappa(0)I }_{L^\infty(B_1)} \leq \delta$. Thus, Lemma \ref{lemma:double} implies that if 
\begin{equation}\label{eq:vk-double-1}
\norm{v_k}_{L^2(B_1)} \leq 2^{d + m_0 + \gamma + \frac{n}{2}} \norm{v_k}_{L^2(B_{1/2})},
\end{equation}
then
\begin{equation}\label{eq:vk-double-2}
\norm{v_{k+1}}_{L^2(B_1)} \leq 2^{d + m_0 + \gamma + \frac{n}{2}} \norm{v_{k+1}}_{L^2(B_{1/2})}. 
\end{equation}  
However, since  \eqref{eq:v-double-1} verifies \eqref{eq:vk-double-1} for $k=0$, we can iterate the implication from \eqref{eq:vk-double-1} to \eqref{eq:vk-double-2} for any integer $k\geq 0$, and arrive at
\begin{equation}\label{eq:v-double-2}
\sup_{r\in(0,8\rho]} \frac{\norm{v}_{L^2(2r)}}{\norm{v}_{L^2(B_r)}} \leq 2^{2(d + m_0 + \gamma + \frac{n}{2})}.
\end{equation} 

Now let $z\in B_\rho$ be arbitrary, and denote by $v_z$ the function $u^+ - \kappa(z) u^-$. Note that $B_{8\rho}(z) \subset B_{16\rho}$ and $B_{2\rho}\subset B_{4\rho}(z)$. Since \eqref{eq:a-ellip}, \eqref{eq:a-Dini} and \eqref{eq:a-k} together imply $|\kappa(z) - \kappa(0)| \leq c_0 \omega(\rho) \leq c_0\delta$, for some $c_0$ depending only on $\lambda$, we have $|v_z| \leq (1 + c_0\delta) |v_0|$ and $|v_0| \leq (1 + c_0\delta)|v_z|$ a.e. in $B_1$, for any $z\in B_\rho$. This together with \eqref{eq:v-double-2} and the above set inclusions, we deduce that 
\begin{equation}\label{eq:vz-double}
\begin{split}
\norm{v_z}_{L^2(B_{8\rho}(z))} &\leq \norm{v_z}_{L^2(B_{16\rho})} \\
&\leq (1+c_0\delta)^2 2^{6(d+m+\gamma + \frac{n}{2})}\norm{v_z}_{L^2(B_{2\rho})} \\
&\leq 2^{6d+m_1+\gamma + \frac{n}{2}} \norm{v_z}_{L^2(B_{4\rho}(z))},
\end{split}
\end{equation} 
where $m_1 = \min\{l\in\N: l\geq 6m + \frac{9}{4} + \frac{5n}{2}\} > 6m + 5\gamma + 1 + \frac{5n}{2}$; in the derivation of the second inequality, we took $\delta$ slightly smaller, if necessary, so that $(1+c_0\delta)^2\leq 2$. 

To this end, we choose $\delta$ even smaller so that Lemma \ref{lemma:double} holds with $d$ replaced by $6d + m_1$ and $\gamma = \frac{1}{8}$. Since $\omega(r_1)\leq \delta$, we can repeat the above proof that \eqref{eq:v-double-1} implies \eqref{eq:v-double-2}. Rephrasing it in terms of $u$ instead of $v_z$ by utilising the ellipticity bound $\kappa(z) \in [\lambda^2,\lambda^{-2}]$ again, we arrive at \eqref{eq:double-unif} with $c_1 = 12$ and $c_2 = m_1 + m_0 + \frac{1}{4} + \frac{n}{2}$, where $m_0$ is the positive integer chosen as in \eqref{eq:v-double-1}; tracking down the dependence of $m_0$ and $m_1$, we see that $c_2$ depends only on $n$ and $\lambda$. 

For the general case where $a_+(0)$ is no longer an identity matrix, one can change the coordinate system, follow the above argument for the scaled solution, and scale it back. Due to the ellipticity of $a_+$, constant $c_1$ remains to be an absolute constant, and $c_2$ is again determined by $n$ and $\lambda$ only, under such a transformation.
\end{proof} 

Next, we shall observe what determines scale $\rho$ such that $u$ satisfies the doubling property \eqref{eq:double-asmp}.

\begin{lemma}\label{lemma:double-factor}
Given any $\e>0$, $\gamma\in(0,\frac{1}{4})$, $R\in(0,1]$ and $d\in\N$, there exists a constant $r\in (0,\frac{R}{4})$, depending only on $n$, $\lambda$, $\omega$, $d$, $\e$ and $R$, such that if $a_+,a_- \in (C^{0,\omega}(B_1))^{n^2}$ satisfy \eqref{eq:a-ellip}, \eqref{eq:a-Dini} and \eqref{eq:a-k}, and if $u\in W^{1,2}(B_1)$ is a nontrivial weak solution to \eqref{eq:main-sing} satisfying  
\begin{equation}\label{eq:sing-d-e}
\limsup_{\rho\ra 0} \frac{\norm{u}_{L^2(B_\rho)}}{\rho^{\frac{n}{2} + d}} \geq \e \norm{u}_{L^2(B_1)},
\end{equation}
then one has 
\begin{equation}\label{eq:double-factor}
\norm{u}_{L^2(B_{2r})} \leq 2^{d+ \gamma + \frac{n}{2}} \norm{u}_{L^2(B_r)}.
\end{equation} 
\end{lemma}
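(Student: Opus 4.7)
The approach is an elementary contradiction argument by dyadic iteration, which requires only the monotonicity of $\rho\mapsto\norm{u}_{L^2(B_\rho)}$ together with the quantitative strength of the hypothesis \eqref{eq:sing-d-e}; none of the approximation lemmas of Section \ref{section:int-reg} or Section \ref{section:sing} are invoked, which is why the PDE parameters $\lambda$ and $\omega$ enter only vacuously through the solution framework.

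First I would normalise so that $\norm{u}_{L^2(B_1)} = 1$ and introduce the rescaled quantity
\begin{equation*}
f(\rho) := \rho^{-(n/2+d)} \norm{u}_{L^2(B_\rho)},\qquad \rho\in(0,1].
\end{equation*}
The hypothesis \eqref{eq:sing-d-e} then reads $\limsup_{\rho\to 0} f(\rho)\geq \e$, while the desired conclusion \eqref{eq:double-factor} at scale $r$ is simply $f(2r)\leq 2^\gamma f(r)$.

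Next, working along the dyadic scales $r_k := R/2^{k+2}$ (so that $r_{k-1} = 2r_k$), I would suppose towards a contradiction that $f(r_{k-1}) > 2^\gamma f(r_k)$ at \emph{every} $k\geq 1$. Iteration combined with $f(r_0)\leq(R/4)^{-(n/2+d)}$ (a consequence of $B_{r_0}\subset B_1$ and the normalisation) yields $f(r_k) < 2^{-k\gamma}(R/4)^{-(n/2+d)}$. A trivial ``bridge'' estimate, based on $B_\rho\subset B_{r_{k-1}}$ and $\rho>r_{k-1}/2$, gives
\begin{equation*}
f(\rho)\leq 2^{n/2+d} f(r_{k-1})\qquad\text{for every }\rho\in(r_k, r_{k-1}].
\end{equation*}
The two estimates together force $\lim_{\rho\to 0}f(\rho)=0$, contradicting $\limsup f\geq \e>0$. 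Therefore the required doubling must hold at some $r = r_{k_*}$.

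To secure a universal lower bound on $r$, I would re-inject the limsup hypothesis in quantitative form: extract a sequence $\rho_j\to 0$ with $f(\rho_j)\geq \e/2$, locate $\rho_j$ in an interval $(r_{k_j},r_{k_j-1}]$, and apply the bridge to obtain $f(r_{k_j-1})\geq 2^{-(n/2+d)-1}\e$. If the doubling were to fail throughout $\{r_0,\dots,r_{k_j-1}\}$, the iteration bound would impose
\begin{equation*}
2^{-(k_j-1)\gamma}(R/4)^{-(n/2+d)} > 2^{-(n/2+d)-1}\e,
\end{equation*}
yielding $k_j-1\leq K_0$ for an explicit $K_0 = K_0(n,d,R,\e,\gamma)$. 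Consequently $k_*\leq K_0$, and the asserted $r$ can be taken in $[R/2^{K_0+2}, R/4)$, depending only on the stated parameters.

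The only delicate point in the plan is precisely this last paragraph: the pure existential contradiction guarantees a good dyadic scale but no a priori lower bound on $r$, so one must use the limsup hypothesis quantitatively (through the bridge applied to $\rho_j$, not merely through passage to the limit) to pin $k_*$ below a threshold determined by the data; this is where the strength of \eqref{eq:sing-d-e} beyond mere nonvanishing at $0$ enters in an essential way.
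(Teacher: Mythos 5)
Your soft pigeonhole step is fine as far as it goes: for a \emph{fixed} solution, if $f(r_{k-1})>2^\gamma f(r_k)$ at every dyadic scale then $f$ decays geometrically, and the bridge estimate forces $\lim_{\rho\to0}f(\rho)=0$, so some good scale $r_{k_*}$ exists. The gap is in the final paragraph, and it is fatal. For each $j$ you establish the disjunction ``either doubling holds at some $k\leq k_j-1$, or $k_j-1\leq K_0$''; since $k_j\to\infty$, for large $j$ the second alternative is impossible and you recover only $k_*\leq k_j-1$, i.e.\ finiteness of $k_*$, which you already had. The inference ``consequently $k_*\leq K_0$'' does not follow: once doubling holds at a single scale, the iteration bound $f(r_m)<2^{-m\gamma}f(r_0)$ stops propagating, and nothing prevents $f$ from recovering to the level $\e/2$ only at scales far below $r_{K_0}$. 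The hypothesis \eqref{eq:sing-d-e} controls $f$ only along \emph{some} sequence $\rho_j\to0$ whose location is not quantified, so no purely elementary argument can pin $k_*$ by a universal threshold. Indeed, as literally stated the uniform claim can even fail: for harmonic $u_\tau=\tau+P_D$ with $P_D$ a homogeneous harmonic polynomial of degree $D\geq d+1$ and $\tau\to0^+$, the left side of \eqref{eq:sing-d-e} is $+\infty$, yet $\norm{u_\tau}_{L^2(B_{2r})}/\norm{u_\tau}_{L^2(B_r)}\to 2^{\frac{n}{2}+D}>2^{\frac{n}{2}+d+\gamma}$ at any fixed $r$. So your scheme cannot be repaired without importing structural information about $u$ beyond the limsup.

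This is exactly where the paper's proof diverges from yours: it implicitly works with $0\in\cS_d(u)\subset\Gamma_{d-1}(u)$ (the setting in which the lemma is applied in Theorem \ref{theorem:double}) and runs a compactness argument. A sequence of putative counterexamples $u_j$, normalised in $L^2(B_1)$ and violating doubling at all scales $r\in[\frac1j,\frac14)$, is approximated via Lemma \ref{lemma:int-Cd} by homogeneous $a_{j,+}(0)$-harmonic polynomials $P_j$ of degree $d$ with $|D^dP_j|\geq c_0\e_0$; passing to limits, the limit $u_0$ inherits both $\limsup_{r\to0}r^{-\frac{n}{2}-d}\norm{u_0}_{L^2(B_r)}\geq c_1\e_0$ and, from the failed doubling, $\norm{u_0}_{L^2(B_r)}=O(r^{\frac{n}{2}+d+\gamma})$, a contradiction. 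The essential input you are missing is precisely what Lemma \ref{lemma:int-Cd} supplies: a degree-$d$ homogeneous approximant with top coefficient bounded below by $c\e$, which bounds $\rho^{-\frac{n}{2}-d}\norm{u}_{L^2(B_\rho)}$ from below at \emph{every} sufficiently small scale simultaneously, not merely along an uncontrolled sequence. If you want a non-compactness proof, you should first derive that all-scale lower bound from Lemma \ref{lemma:int-Cd} and only then run your dyadic iteration.
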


\begin{proof}
To simplify our exposition, let us consider the case $R = 1$ only. Suppose towards a contradiction that there is some $\e_0>0$, and for each $j=1,2,\cdots$, one can find a pair $(a_{j,+},a_{j,-})$ of matrix-valued mappings in $(C^{0,\omega}(B_1))^{n^2}$ satisfying \eqref{eq:a-ellip}, \eqref{eq:a-Dini} and \eqref{eq:a-k} (where $\lambda$ and $\omega$ are fixed uniformly but $\kappa_j$ may vary), and a weak solution $u_j \in W^{1,2}(B_1)$ to \eqref{eq:main-sing} with $a_{j,+}$ and $a_{j,-}$ being the coefficient matrices such that  
\begin{equation}\label{eq:uj-sing-d-e}
\norm{u_j}_{L^2(B_1)} = 1,\quad \limsup_{\rho\ra 0} \frac{\norm{u_j}_{L^2(B_\rho)}}{\rho^{\frac{n}{2}+d}} \geq \e_0, \quad\text{but}
\end{equation} 
\begin{equation}\label{eq:double-factor-false}
\inf_{r \in [\frac{1}{j},\frac{1}{4})} \frac{\norm{u_j}_{L^2(B_{2r})}}{\norm{u_j}_{L^2(B_r)}} > 2^{d + \gamma + \frac{n}{2}}.
\end{equation} 

Let $v_j$ denote the function $u_j^+ - \kappa_j(0) u_j^-$. Since $0\in\cS_d(u_j) \subset \Gamma_{d-1}(u_j)$, Lemma \ref{lemma:int-Cd} (along with the assumption $\norm{u_j}_{L^2(B_1)} = 1$) ensures the existence of a homogeneous $a_{j,+}(0)$-harmonic polynomial $P_j$ with degree $d$ such that
\begin{equation}\label{eq:vj-Pj-W12}
|D^d P_j| + \sup_{r\in(0,\frac{1}{2})} \frac{ \norm{v_j - P_j}_{L^2(B_r)} + r \norm{\nabla (v_j - P_j)}_{L^2(B_r)} }{r^{\frac{n}{2} + d}\omega_1(r)} \leq C_0, 
\end{equation} 
where $C_0$ depends only on $n$, $\lambda$, $\omega$ and $d$. In particular, \eqref{eq:uj-sing-d-e} implies in \eqref{eq:vj-Pj-W12}, along with inequality $|v_j| \geq \lambda^2 |u_j|$ a.e. on $B_1$, that
\begin{equation}\label{eq:DdPj-0}
|D^d P_j| \geq c_0\e_0,
\end{equation} 
where $c_0\in(0,1)$ depends at most on $n$, $\lambda$ and $d$. 

As $\{P_j\}_{j=1}^\infty$ being a sequence of homogeneous polynomials with the same degree and uniformly bounded on $B_1$, we can find a homogeneous polynomial $P_0$ of degree $d$ such that $P_j\ra P_0$ in $C^d(B_1)$ along a subsequence, which we shall continue to denote by $\{P_j\}_{j=1}^\infty$. Passing to the limit in \eqref{eq:DdPj-0}, we observe that $P_0$ is nontrivial, and 
\begin{equation}\label{eq:DdP0-0}
|D^d P_0| \geq c_0\e_0. 
\end{equation} 

On the other hand, we can deduce from \eqref{eq:vj-Pj-W12} and the boundedness of $\{P_j\}_{j=1}^\infty$ in $C^d(B_1)$ that $\{v_j\}_{j=1}^\infty$ is bounded in $W^{1,2}(B_{1/2})$, hence after extracting a further subsequence if necessary, one may also assume that $v_j \ra v_0$ strongly in $L^2(B_{1/2})$, for some $v_0 \in W^{1,2}(B_{1/2})$. In particular, noting that $\kappa_j(0) \in [\lambda^2,\lambda^{-2}]$ from \eqref{eq:a-ellip} and \eqref{eq:a-k}, there exists a constant $\kappa_0\in[\lambda^2,\lambda^{-2}]$ such that $\kappa_j(0) \ra \kappa_0$, along another subsequence, so $u_j \ra u_0$ strongly in $L^2(B_{1/2})$ with $u_0 = v_0^+ - \kappa_0^{-1} u_0^-$, or equivalently, $v_0 = u_0^+ - \kappa_0 u_0^-$. 

To this end, take $j\ra\infty$ in \eqref{eq:vj-Pj-W12}, with each $r\in(0,\frac{1}{2})$ fixed. Then we can deduce from the strong convergence of $v_j\ra v_0$ in $L^2(B_{1/2})$ and the uniform convergence of $P_j\ra P_0$ on $B_1$ to deduce that
\begin{equation*}
\sup_{r\in(0,\frac{1}{2})} \frac{\norm{v_0 - P_0}_{L^2(B_r)}}{r^{\frac{n}{2} + d}\omega_1(r)} \leq C_0.
\end{equation*} 
This together with \eqref{eq:DdP0-0} and inequality $|u_0| \geq \lambda^2 |v_0|$ a.e. on $B_{1/2}$ yields that
\begin{equation}\label{eq:v0-sing-d}
\limsup_{r\ra 0} \frac{\norm{u_0}_{L^2(B_r)}}{r^{\frac{n}{2} + d}} \geq \lambda^2 \left( \limsup_{r\ra 0} \frac{\norm{v_0}_{L^2(B_r)}}{r^{\frac{n}{2}+ d}} \right)\geq c_1\e_0 > 0. 
\end{equation} 
for some $c_1\in(0,1)$ depending at most on $n$, $\lambda$ and $d$. 

However, passing to the limit in \eqref{eq:double-factor-false}, with each $r\in(0,\frac{1}{4})$ fixed, and utilising the strong convergence of $u_j \ra u_0$ in $L^2(B_{1/2})$, we deduce that 
\begin{equation*}
\inf_{r\in(0,\frac{1}{4})}\frac{\norm{u_0}_{L^2(B_{2r})}}{\norm{u_0}_{L^2(B_r)}} \geq 2^{d + \gamma + \frac{n}{2}},
\end{equation*} 
so after an iteration argument, we arrive at
\begin{equation*}
\limsup_{r\ra 0} \frac{\norm{u_0}_{L^2(B_r)}}{r^{\frac{n}{2} + d + \gamma }} \leq c_2 \norm{u_0}_{L^2(B_{1/2})} < \infty,
\end{equation*} 
for some $c_2>1$ depending only on $n$ and $d$, a contradiction to \eqref{eq:v0-sing-d}. This finishes the proof. 
\end{proof}

Now we are in a position to prove our main result of this section.

\begin{proof}[Proof of Theorem \ref{theorem:double}]

After normalisation, we can assume without loss of any generality that $\norm{u}_{L^2(B_1)} = 1$. As $d$ being the maximal vanishing order of $u$, it follows from Theorem \ref{theorem:sing} that $\{u=0\}  = \bigcup_{k=1}^d \cS_d(u)$, where we wrote by $\cS_1(u)$ the regular part, $\cN(u)$ (see \eqref{eq:reg} for its definition), of the nodal set. Moreover, $\Gamma_{d-1}(u)  = \cS_d(u) $ (with $\Gamma_0(u) = \{u=0\}$), so there should exist a small constant $\e_1>0$ such that for any $z\in\cS_d(u)\cap\overline{B_{1/2}}$, 
\begin{equation}\label{eq:double-claim}
\limsup_{r\ra 0} \frac{\norm{u}_{L^2(B_r(z))}}{r^{\frac{n}{2} + d}} \geq \e_1.  
\end{equation} 

Suppose that our claim \eqref{eq:double-claim} does not hold. Then we can find a sequence $\{z_j\}_{j=1}^\infty\subset\cS_d(u)\cap \overline{B_{1/2}}$ such that 
\begin{equation*}
\limsup_{r\ra 0} \frac{\norm{u}_{L^2(B_r(z))}}{r^{\frac{n}{2} + d}} \leq \frac{1}{j},
\end{equation*}
so it follows from the uniform $C^d$-approximation (Lemma \ref{lemma:int-Cd}) that $P_{j,z_j}$, the homogeneous $a_+(z_j)$-harmonic polynomial of degree $d$ approximating $v_{z_j} = u^+ - \kappa(z_j) u^-$ at $z_j$, must satisfy 
\begin{equation}\label{eq:DdPzj-false}
|D^d P_{z_j}| \leq \frac{C_0}{j},
\end{equation} 
for some $C_0>0$ independent of $j$. Extracting a subsequence of $\{z_j\}_{j=1}^\infty$ along which $z_j\ra z_0$ for some $z_0 \in \overline{B_{1/2}}$, we know that $z_0 \in \Gamma_{d-1}(u)\cap \overline{B_{1/2}}$, since $\Gamma_{d-1}(u)$ is a relatively closed set in $B_1$, according to Lemma \ref{lemma:closed}. Therefore, we can apply Lemma \ref{lemma:int-Cd} again to $z_0$, and obtain another homogeneous $a_+(z_0)$-harmonic polynomial of degree $d$, approximating $v_{z_0} = u^+ - \kappa (z_0) u^-$ at $z_0$ in the sense that
\begin{equation}\label{eq:vz0-Pz0-W12}
\sup_{r\in(0,\frac{1}{4})} \frac{\norm{v_{z_0} - P_{z_0}}_{L^2(B_r(z_0))} }{r^{\frac{n}{2} + d} \omega_1(r)} \leq C_1, 
\end{equation}
where $C_1>0$ depends only on $n$, $\lambda$, $\omega$ and $d$. However, due to the uniform continuity \eqref{eq:DdPz-C} of the mapping $z\mapsto D^d P_z$ on $\Gamma_{d-1}(u)\cap \overline{B_{1/2}}$, we may infer from \eqref{eq:DdPzj-false} as well as the convergence $z_j\ra z_0$ that 
\begin{equation}\label{eq:DdPz0-0}
|D^d P_{z_0}| = 0.
\end{equation} 
However, owing to \eqref{eq:vz0-Pz0-W12}, \eqref{eq:DdPz0-0} implies that $z_0 \in \Gamma_d(u)$, a contradiction against our assumption that $u$ vanishes on $\{u=0\}$ with order at most $d$. This justifies our claim that there must exist some $\e_1>0$ such that \eqref{eq:double-claim} holds for all $z\in \cS_d(u)\cap\overline{B_{1/2}}$. 

With \eqref{eq:double-claim} at hand, we can apply Lemma \ref{lemma:double-factor} to all $z\in \cS_d(u)\cap \overline{B_{1/2}}$, with $\e = \e_1$, $\gamma = \frac{1}{8}$ (for the matter of simplicity) and $R = 8\rho_0$, where $\rho_0 \in (0,\frac{1}{64})$ is chosen as in Lemma \ref{lemma:double-unif} with the same $d$, $\gamma = \frac{1}{8}$ and domain $B_1$ replaced by $B_{1/2}$. Then it will give us a radius $r_1\in(0,8\rho_0)$, determined by $n$, $\lambda$, $\omega$, $d$ and $\e_1$ only, such that 
\begin{equation}\label{eq:double-z}
\sup_{z\in \cS_d(u)\cap \overline{B_{1/2}}} \frac{\norm{u}_{L^2(B_{2r_1}(z))}}{\norm{u}_{L^2(B_{r_1}(z))}} \leq 2^{d + \frac{1}{8} + \frac{n}{2}}.
\end{equation}
Thus, $u$ satisfies the doubling property \eqref{eq:double-asmp} at each $z\in\cS_d(u)\cap \overline{B_{1/2}}$, with $\rho_1 = \frac{r_1}{8} \in (0,\rho_0]$ and $\gamma = \frac{1}{8}$, so Lemma \ref{lemma:double-unif} ensures that with $N_1 = \bigcup_{x\in \cS_d(u)\cap \overline{B_{1/2}}} B_{\rho_1}(x)$, we have 
\begin{equation}\label{eq:double-nbd}
\sup_{z\in N_1} \sup_{r \in (0,4\rho_1]} \frac{\norm{u}_{L^2(B_{2r}(z))}}{\norm{u}_{L^2(B_r(z))}} \leq 2^{c_1 d + c_2 + \frac{n}{2}}, 
\end{equation}
where $c_1$ is an absolute constant and $c_2$ depends only on $n$ and $\lambda$.  

To summarise, we have obtained a neighbourhood $N_1$, of $\cS_d(u)\cap \overline{B_{1/2}}$ with size $\rho_1$, where $u$ satisfies a uniform doubling property with order $c_1 d + c_2$. 

To this end, we can repeat the above argument to obtain a uniform doubling property for the rest of the nodal set, $\{u=0\} \cap \overline{B_{1/2}} \setminus N_1$. One can proceed as follows. 

First, assume that we have found some $\rho_{d-k} \in (0,\frac{1}{64})$ for which \eqref{eq:double-nbd} holds with $N_1$ replaced by $N_{d-k} = \bigcup_{x\in \Gamma_k(u)\cap \overline{B_{1/2}}} B_{\rho_{d-k}}(x)$, and $\rho_1$ replaced by $\rho_{d-k}$. Then by the definition of $N_{d-k}$, we have $\dist(\cS_k(u)\cap\overline{B_{1/2}}\setminus N_{d-k}, \Gamma_k(u)\cap \overline{B_{1/2}})\geq \rho_{d-k}$. Therefore, we can argue as in the proof for our earlier claim \eqref{eq:double-claim} and obtain some $\e_{d-k+1}>0$, depending only on $n$, $\lambda$, $\omega$, $k$, $\rho_{d-k}$ and certain character of $u$, such that \eqref{eq:double-claim} holds with $d$ and $\e_1$ replaced by $k$ and $\e_{d-k+1}$ respectively. Then we can follow the argument for \eqref{eq:double-nbd} and obtain $\rho_{d-k+1}\in (0,\rho_{d-k}]$, depending only on $n$, $\lambda$, $\omega$, $k$, $\rho_{d-k}$ and $\e_{d-k+1}$, such that \eqref{eq:double-nbd} holds with $N_1$ replaced by $N'_{d - k +1} = \bigcup_{x\in \cS_k(u)\cap \overline{B_{1/2}}\setminus N_{d-k}} B_{\rho_{d-k+1}}(x)$, and $\rho_1$ replaced by $\rho_{d-k+1}$. This combined with the initial hypothesis on the existence of $\rho_{d-k}$, as well as the observation that $\Gamma_{k-1}(u) = \cS_k(u)\cup \Gamma_k(u)$ and $\rho_{d-k+1} \leq \rho_{d-k}$, implies that \eqref{eq:double-nbd} holds with $N_1$ replaced by $N_{d-k+1} = \bigcup_{x\in\Gamma_k(u)\cap \overline{B_{1/2}}} B_{\rho_{d-k+1}}(x)$ and $\rho_1$ replaced by $\rho_{d-k+1}$. Note that constants $c_1$ and $c_2$ in \eqref{eq:double-nbd} do not change during this step. 

Iterating this argument for $k$ running from $d-1$ to $1$, which only consists of a finite number of steps, we arrive at the desired conclusion, with $r_0$ in \eqref{eq:double} given by $\rho_d \in (0,\rho_{d-1}]\subset\cdots\subset (0,\rho_0]$. Tracking down the dependence on each $\rho_k$, we conclude that $r_0$ is determined solely by $n$, $\lambda$, $\omega$, $d$ and certain character of $u$. This finishes the proof. 
\end{proof}


\section{Measure Estimate of Nodal Set}\label{section:nodal}

In this section, we shall estimate the $(n-1)$-dimensional Hausdorff measure of the nodal set of weak solutions to a broken quasilinear PDE. Our analysis will follow closely the classical works \cite{HS}, by Hardt and Simon, and \cite{HL94}, by Han and Lin, and it can be considered as an extension of these works to the setting of broken quasilinear PDEs. 

\begin{theorem}\label{theorem:meas}
Suppose that $a_+,a_- \in (C^{0,\omega}(B_1))^{n^2}$ satisfy \eqref{eq:a-ellip}, \eqref{eq:a-Dini} and \eqref{eq:a-k}, and let $u\in W^{1,2}(B_1)$ be a weak solution to \eqref{eq:main-sing}. Given any $N\geq 1$, there exists a constant $\delta_N > 0$, depending only on $n$, $\lambda$, $\omega$ and $N$, such that if $\omega(1)\leq \delta_N$, and 
\begin{equation}\label{eq:almgren}
\sup_{z\in B_1} \sup_{r\in(0,1-|z|)} \frac{ r\int_{B_r(z)} |\nabla u|^2\,dx }{\int_{\partial B_r(z)} u^2\,d\sigma_x} \leq N, 
\end{equation} 
then one has 
\begin{equation}\label{eq:meas}
H^{n-1} ( \{u = 0\} \cap B_{1/2}) \leq CN,
\end{equation} 
where $C>0$ depends only on $n$, $\lambda$ and $\omega$. 
\end{theorem}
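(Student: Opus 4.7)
The plan is to adapt the classical Hardt--Simon \cite{HS} and Han--Lin \cite{HL94} strategy to the broken quasilinear setting, exploiting the key observation emphasized in the introduction: $\{u=0\}=\{v_z=0\}$ for every $z$ (since $\kappa(z)>0$ makes $v_z=u^+-\kappa(z)u^-$ vanish precisely where $u$ does), and the continuous ``corrected gradient'' $Vu$ from Theorem \ref{theorem:int-Lip} furnishes a substitute for $\nabla u$ that matches the polynomial approximation of $v_z$ at each vanishing point. Throughout I would normalize $\norm{u}_{L^2(B_1)}=1$.

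The first step is to convert the Almgren-type frequency bound \eqref{eq:almgren} into a uniform doubling estimate. Setting $H(r,z)=\int_{\partial B_r(z)}u^2\,d\sigma$ and $D(r,z)=\int_{B_r(z)}|\nabla u|^2\,dx$, I would differentiate $r^{-(n-1)}H(r,z)$ and test \eqref{eq:main-sing} against $u$ on $B_r(z)$, using \eqref{eq:a-ellip}, \eqref{eq:a-k} together with the smallness $\omega(1)\le\delta_N$ to absorb the perturbative terms coming from the variable coefficients and the jump of $\nabla u$ across $\{u=0\}$. This produces an inequality of the form
\[
\frac{d}{dr}\log\bigl(r^{-(n-1)}H(r,z)\bigr)\le\frac{2N(r,z)+\varepsilon(r)}{r},
\]
where $\varepsilon(r)\to 0$ with $\omega(1)$, and integration combined with \eqref{eq:almgren} yields a uniform doubling estimate $\norm{u}_{L^2(B_{2r}(z))}\le 2^{c_1 N+c_2}\norm{u}_{L^2(B_r(z))}$ for all $z\in\overline{B_{1/2}}$ and $0<r<\tfrac18$, with $c_1,c_2$ depending only on $n,\lambda,\omega$.

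Next I would combine this doubling bound with the pointwise $C^d$-approximation of Lemma \ref{lemma:int-Cd} to control the vanishing order at every nodal point. The doubling forces the vanishing order at each $z\in\{u=0\}\cap\overline{B_{1/2}}$ to be at most $d_0:=\lceil c_1N+c_2\rceil$, since a higher vanishing order would contradict the doubling rate after iteration. Lemma \ref{lemma:int-Cd} then provides, at each such $z$, a homogeneous $a_+(z)$-harmonic polynomial $P_z$ of some degree $d(z)\le d_0$ satisfying $\norm{v_z-P_z}_{L^\infty(B_r(z))}\le C\,r^{d(z)}\omega_1(r)$ for all small $r>0$, and $Vu(z)=\nabla P_z(z)$ with $Vu$ approximating $\nabla P_z$ uniformly on the same scale. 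The nodal set comparison lemma advertised in the introduction (Lemma \ref{lemma:compare}) then converts this analytic approximation into a $H^{n-1}$-comparison between $\{u=0\}=\{v_z=0\}$ and $\{P_z=0\}$; combined with the classical bound $H^{n-1}(\{P=0\}\cap B_r)\le C(n)\,r^{n-1}\deg P$ for homogeneous harmonic polynomials (valid after the coordinate change that turns $a_+(z)$ into $I$), it supplies a radius $r_z>0$ at each $z$ with
\[
H^{n-1}\bigl(\{u=0\}\cap B_{r_z/2}(z)\bigr)\le C\,d(z)\,r_z^{n-1}.
\]
A Vitali covering of $\{u=0\}\cap\overline{B_{1/2}}$ by such balls, using the disjointness of the selected subfamily to bound $\sum_i r_i^{n-1}\le C(n)$, then yields $H^{n-1}(\{u=0\}\cap B_{1/2})\le C\,d_0\le C'N$, which is \eqref{eq:meas}.

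I expect the main obstacle to lie in the third step, the nodal set comparison: the discontinuity of $\nabla u$ across $\{u=0\}$ forbids a direct implicit-function argument on $u$, so the comparison must be executed on $v_z$ using $Vu$ rather than on $u$ itself. Showing that the sup-norm approximation of $\nabla P_z$ by $Vu$ is uniform in $z$ over $\{u=0\}\cap\overline{B_{1/2}}$, and that flatness of $\{P_z=0\}$ transfers quantitatively to $\{v_z=0\}$ in a manner compatible with the Hardt--Simon covering scheme, is the technical heart of the argument. A secondary subtlety is that Almgren's frequency need not be monotone in the broken setting, so Step~1 relies on the smallness $\omega(1)\le\delta_N$ to close a perturbation argument rather than on a clean monotonicity formula.
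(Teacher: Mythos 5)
Your Steps 1 and 2 are sound (and the paper in fact never needs to derive doubling from frequency: since \eqref{eq:almgren} is assumed at every centre and every scale, it is simply inherited by each rescaled solution), and you have correctly identified the two structural ingredients — the continuous substitute $Vu$ for the gradient and the fact that $\{u=0\}=\{v_z=0\}$ — that make Lemma \ref{lemma:compact} and Lemma \ref{lemma:compare} usable. The gap is in your Step 3, in two places. First, Lemma \ref{lemma:compare} only compares the \emph{regular} portions of the nodal sets, i.e.\ $\{u=0,\ |Vu|\ge\eta\}$ against $\{h=0,\ |\nabla h|\ge\eta/2\}$; near a point where $|D^{d}P_z|$ is small the approximation $\norm{v_z-P_z}_{L^\infty(B_r(z))}\le C r^{d}\omega_1(r)$ gives no measure control of $\{v_z=0\}$ at any fixed scale, and the radius $r_z$ at which $P_z$ dominates the error degenerates as $z$ approaches higher-order singular points, so there is no uniform lower bound on $r_z$. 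Second, and fatally, the concluding covering bound is dimensionally wrong: disjointness of balls $B_{r_i}(z_i)$ in $\R^n$ controls $\sum_i r_i^{\,n}$, not $\sum_i r_i^{\,n-1}$ (take $M$ disjoint balls of radius $M^{-1/n}$: then $\sum_i r_i^{\,n-1}=M^{1/n}\to\infty$). A single-scale Vitali covering therefore cannot produce \eqref{eq:meas}.

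The paper closes exactly this gap by running the Hardt--Simon/Han--Lin iteration rather than a one-shot covering. At each stage one rescales a ball of the current cover to $B_1$, uses Lemma \ref{lemma:compact} (with the smallness $\omega(1)\le\delta_N$) to produce a harmonic function $g$ close to $v$ in sup-norm with $\nabla g$ close to $Vv$ and with frequency still bounded by $cN$; the part of the nodal set where $|Vv|\ge 2\gamma/3$ is measured by Lemma \ref{lemma:compare} together with Lin's estimate $H^{n-1}(\{g=0\}\cap B_{3/4})\le c_4N$; and the remaining near-critical part $\{|\nabla g|<\gamma\}$ is covered — by Lemma 3.3 of \cite{HL94}, which uses that $g$ is \emph{harmonic} with bounded frequency — by finitely many balls with $\sum_i r_i^{\,n-1}\le\frac12$. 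Iterating inside those balls, the geometric factor $\frac12$ per generation makes $\sum_k cN2^{-k}$ converge and yields \eqref{eq:meas}. If you want to salvage your outline, replace the Vitali step by this iteration; the quantitative $(n-1)$-content bound on the critical set of the approximating harmonic function is the ingredient your argument is missing.
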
 

Due to the uniform doubling property (Theorem \ref{theorem:double}) for solutions featuring finite vanishing order, we obtain a global measure estimate as a corollary to the above theorem.

\begin{corollary}\label{corollary:meas}
Under the setting of Theorem \ref{theorem:double}, there exists a positive constant $C_{d,u}$, depending only on $n$, $\lambda$, $\omega$, $d$ and certain character of $u$, such that
\begin{equation*}
H^{n-1} (\{ u =0 \} \cap B_{1/2}) \leq C_{d,u}. 
\end{equation*} 
\end{corollary}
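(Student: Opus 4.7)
The plan is to localise the problem around each nodal point, rescale to the unit ball so that Theorem~\ref{theorem:meas} becomes applicable, and sum over a finite sub-cover. Theorem~\ref{theorem:double} supplies a scale $r_{d,u}\in(0,\tfrac{1}{4})$ and a doubling constant $K=2^{c_1d+c_2+n/2}$ such that
\begin{equation*}
\norm{u}_{L^2(B_{2r}(z))}\leq K\norm{u}_{L^2(B_r(z))}\quad\text{for every }z\in\cB_{d,u},\ r\leq r_{d,u}.
\end{equation*}
Since $\{u=0\}\cap\overline{B_{1/2}}\subset\cB_{d,u}$ is compact, it can be covered by finitely many balls $B_{\rho/2}(z_i)$, $i=1,\dots,N_0$, centred on the nodal set, for a radius $\rho\in(0,r_{d,u}/4]$ to be fixed below; the integer $N_0$ is controlled by $n$ and $\rho$.

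The main technical step is to upgrade the $L^2$-doubling property into the pointwise Almgren-type bound \eqref{eq:almgren}. I would show that for every $z\in\cB_{d,u}$ and every $r\leq r_{d,u}/4$,
\begin{equation*}
\frac{r\int_{B_r(z)}|\nabla u|^2\,dx}{\int_{\partial B_r(z)}u^2\,d\sigma_x}\leq N_d,
\end{equation*}
with $N_d$ depending only on $n$, $\lambda$ and $d$. The numerator is controlled by Caccioppoli's inequality $\int_{B_r(z)}|\nabla u|^2\leq Cr^{-2}\int_{B_{2r}(z)}u^2$ (valid since both $a_+$ and $a_-$ are uniformly elliptic) combined with the doubling bound, giving $\int_{B_r(z)}|\nabla u|^2\leq CK^2r^{-2}\int_{B_r(z)}u^2$. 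For the denominator one needs the reverse comparison $\int_{B_r(z)}u^2\leq C'r\int_{\partial B_r(z)}u^2$; I would derive it via a Rellich/Pohozaev identity applied to the corrected function $v_z=u^+-\kappa(z)u^-$, which (after freezing $a_+(z)=I$ by a linear change of coordinates) satisfies the Poisson-type equation in the proof of Lemma~\ref{lemma:int-C1} with inhomogeneity of size $O(\omega(r))(\norm{\nabla v_z}_{L^2(B_r(z))}+r^{n/2})$. At scales on which $\omega(r)$ is small, the error can be absorbed, equivalently, the doubling for $v_z$ yields an almost-monotonicity of $r\mapsto r^{1-n}\int_{\partial B_r(z)}v_z^2$, which integrates to the reverse comparison and hence to the frequency bound.

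With this uniform Almgren-type bound in hand, I set $u_i(x)=u(z_i+\rho x)$ on $B_1$. The rescaled coefficients $a_\pm^{(i)}(x)=a_\pm(z_i+\rho x)$ are uniformly elliptic with modulus $\omega^{(i)}(s)=\omega(\rho s)$, so $\omega^{(i)}(1)=\omega(\rho)$. A direct change of variables shows that the ratio in \eqref{eq:almgren} is scale-invariant, so $u_i$ satisfies \eqref{eq:almgren} on $B_1$ with $N=N_d$. Choosing $\rho$ so small that $\omega(\rho)\leq\delta_{N_d}$, with $\delta_{N_d}$ the smallness constant from Theorem~\ref{theorem:meas}, the theorem yields $H^{n-1}(\{u_i=0\}\cap B_{1/2})\leq CN_d$. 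Scaling back gives $H^{n-1}(\{u=0\}\cap B_{\rho/2}(z_i))\leq CN_d\rho^{n-1}$, and summing over the $N_0$ balls of the cover delivers $H^{n-1}(\{u=0\}\cap B_{1/2})\leq C_{d,u}$ with $C_{d,u}:=CN_dN_0\rho^{n-1}$. Tracking the dependences, $\rho$, $N_0$ and $N_d$ all depend solely on $n$, $\lambda$, $\omega$, $d$ and $r_{d,u}$, so $C_{d,u}$ has the asserted form.

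The main obstacle is the intermediate step, namely extracting the pointwise Almgren bound from the $L^2$-doubling in the Dini regime: the classical Garofalo--Lin monotonicity which makes this passage automatic requires Lipschitz coefficients, so here one must work through $v_z$, quantify the perturbation via $\omega$, and absorb it into the Rellich identity. The remaining rescaling and covering steps are routine once the frequency bound is secured.
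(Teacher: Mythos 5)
Your overall strategy coincides with the paper's: pass from the uniform doubling property of Theorem \ref{theorem:double} to a uniform Almgren-type bound \eqref{eq:almgren} at scales below $r_{d,u}$, rescale so that Theorem \ref{theorem:meas} applies on each small ball centred on the nodal set, and conclude by a finite covering. The one place where you diverge --- and where you flag the ``main obstacle'' --- is the reverse comparison $\int_{B_r(z)}u^2\,dx\leq C\,r\int_{\partial B_r(z)}u^2\,d\sigma_x$. You propose to obtain it through a Rellich--Pohozaev identity for $v_z=u^+-\kappa(z)u^-$ and an almost-monotonicity of $r\mapsto r^{1-n}\int_{\partial B_r(z)}v_z^2$, absorbing $\omega(r)$-errors; this is workable but much heavier than necessary, and you do not actually carry it out. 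The paper's route is elementary and requires no PDE at all for this step: the trace inequality
\begin{equation*}
\int_{B_r}w^2\,dx\ \leq\ \frac{1}{n}\Bigl(r+\frac{1}{\e}\Bigr)\int_{\partial B_r}w^2\,d\sigma_x\ +\ \frac{\e r^2}{n}\int_{B_r}|\nabla w|^2\,dx
\end{equation*}
holds for every $w\in W^{1,2}(B_r)$ and every $\e>0$, and the gradient term on the right is exactly what you have already controlled by Caccioppoli plus doubling, namely $r^2\int_{B_r}|\nabla u|^2\leq CK^2\int_{B_r}u^2$ with $K=2^{c_1d+c_2+n/2}$. Choosing $\e$ small in terms of $CK^2$ absorbs that term into the left-hand side and yields the reverse comparison with a constant of the form $2^{c_1'd+c_2'}$, hence \eqref{eq:double-meas}, with no recourse to monotonicity formulae or to the structure of $v_z$. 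So your proof is correct in architecture, but the step you identify as the hard one dissolves once you replace the Rellich-identity detour by this trace inequality with a tunable parameter; the rest of your argument (scale invariance of the frequency quotient, the choice of $\rho$ with $\omega(\rho)\leq\delta_{N_d}$, and the covering count) matches the paper.
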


\begin{proof}
The uniform doubling property \eqref{eq:double} combined with a trace inequality $\int_{B_r} w^2\,dx \leq \frac{1}{n}(r + \frac{1}{\e}) \int_{\partial B_r} w^2\,dx + \frac{\e r^2}{n} \int_{B_r} |\nabla w|^2\,dx$, which holds for any $w\in W^{1,2}(B_r)$ and any $\e>0$, implies that
\begin{equation}\label{eq:double-meas}
\sup_{z\in \cB_{d,u}} \sup_{r\in (0,r_{d,u}]} \frac{ r\int_{B_r(z)} |\nabla u|^2\,dx }{\int_{\partial B_r(z)} u^2\,d\sigma_x} \leq c_1d + c_2,
\end{equation} 
for some absolute constant $c_1>1$ and a positive constant $c_2$ depending only on $n$ and $\lambda$, where $r_{d,u}$ and $\cB_{d,u}$ are as in Theorem \ref{theorem:double}. Take $N = c_1d + c_2$, and choose a small $r_N < r_{d,u}$ such that $\omega(r_N) \leq \delta_N$, where $\delta_N$ is given as in Theorem \ref{theorem:meas}. Clearly, $r_N$ is determined by $n$, $\lambda$, $\omega$, $N$ and $r_{d,u}$. Due to \eqref{eq:double-meas}, one can apply Theorem \ref{theorem:meas} for any $z\in \{u=0\}\cap B_{1/2}$, after some scaling argument, and deduce that
\begin{equation*}
H^{n-1}\left( \{ u = 0\} \cap B_{\frac{r_N}{2}} (z) \right) \leq CNr_N^{n-1}.  
\end{equation*}
Hence, a standard covering argument yields the conclusion. 
\end{proof} 

As mentioned briefly in the beginning of this section, our argument will follow the classical works \cite{HS} and \cite{HL94}, which mainly consists of the following three steps: (i) first, to compare the given solution to a harmonic function having comparable frequency; (ii) second, to compare the regular part of the nodal set of the solution and that of the approximating harmonic function, which is called the nodal set comparison lemma (c.f. Lemma 4.8 in \cite{HS}); (iii) third, to cover the singular part by a finitely many balls with sufficiently small radii. 

Throughout all these steps, the $C^{1,\alpha}$-regularity of the given solution and the closeness (or compactness) in $C^1$-norm play an essential role in the aforementioned articles. Replacing $C^{1,\alpha}$-regularity of solutions to $C^1$-regularity does not require much work, so it is relatively straightforward to extend the argument to uniformly elliptic, divergence-type PDEs with Dini coefficients (without jump discontinuity); still, such an extension has not been considered yet, to the best of our knowledge. 

What seems to be more nontrivial, which is exactly the place that we would like to claim the novelty of this section for, is to extend the argument to broken quasilinear PDEs, since then the solutions are Lipschitz regular at their best, so one cannot hope for closeness in $C^1$-norm. However, due to Theorem \ref{theorem:int-Lip}, $\nabla u^+ - \kappa \nabla u^-$ has a uniformly continuous version, $Vu$, across the nodal set, and it turns out that if $u^+ - \kappa u^-$ approximates a harmonic function, then $Vu$ approximates its gradient, with respect to the supremum norm. Hence, we can substitute with the very mapping $Vu$ the role of the usual gradient in the classical argument. Now with the particular structure that $u^+ - \kappa u^-$ has the same nodal set with $u$, we can proceed with the comparison of the nodal set as well. 

Henceforth, we shall focus on the part where either we need to involve a new argument adapted to our setting, or the modification is not straightforward. Let us begin with the compactness lemma.

\begin{lemma}\label{lemma:compact}
Suppose that $a_+,a_-\in (C^{0,\omega}(B_1))^{n^2}$ satisfy \eqref{eq:a-ellip}, \eqref{eq:a-Dini} and \eqref{eq:a-k}, and let $u\in W^{1,2}(B_1)$ be a weak solution to \eqref{eq:main-sing}.
\begin{enumerate}[(i)]
\item If $\norm{a_\pm - a_\pm(0)}_{L^\infty(B_1)} \leq \delta$, then with the $a_+(0)$-harmonic function $h \in (u^+ - \kappa(0) u^-) + W_0^{1,2}(B_1)$, one has 
\begin{equation}\label{eq:u-h-Linf}
\sup_{B_{3/4}} | u^+ - \kappa u^- - h | \leq C\delta \norm{u}_{L^2(B_1)},
\end{equation}
where $C>0$ depends only on $n$, $\lambda$ and $\omega$. 
\item Moreover, if $\delta\in(0,\frac{1}{2}]$ in (i), and if there is some $N \geq 1$ that 
\begin{equation}\label{eq:u-freq}
\frac{\int_{B_1} |\nabla u|^2\,dx}{\int_{\partial B_1} u^2\,d\sigma_x} \leq N, 
\end{equation}
then with the same $h$ as in (i), we have
\begin{equation}\label{eq:h-freq}
\frac{\int_{B_1} |\nabla h|^2\,dx}{\int_{\partial B_1} h^2\,d\sigma_x} \leq c N,
\end{equation} 
where $c>1$ depends only on $\lambda$. 
\item On the other hand, for any given $\e>0$, one can find some $\bar\delta \in(0,\frac{1}{2})$, depending only on $n$, $\lambda$, $\omega$ and $\e$, such that if $\delta \in (0,\bar\delta]$ in (i), then with $Vu$ given as in Theorem \ref{theorem:int-Lip}, and with the same $h$ as in (i), we have 
\begin{equation}\label{eq:Du-Dh-Linf}
\sup_{B_{3/4}} | Vu - \nabla h| \leq \e \norm{u}_{L^2(B_1)}.
\end{equation} 
\end{enumerate}
\end{lemma}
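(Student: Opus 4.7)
For part (i), set $v = u^+ - \kappa(0) u^-$ and rewrite the PDE in terms of the constant matrix $a_+(0)$:
\begin{equation*}
\ddiv(a_+(0) \nabla v) = \ddiv F,\quad F = (a_+(0) - a_+)\nabla u^+ - (a_-(0) - a_-)\nabla u^-,
\end{equation*}
so that $|F| \leq \delta|\nabla u|$ pointwise. The function $w := v - h$ lies in $W_0^{1,2}(B_1)$ and solves $\ddiv(a_+(0)\nabla w) = \ddiv F$. Apply Theorem \ref{theorem:int-Lip} with $f\equiv 0$ to get $\norm{\nabla u}_{L^\infty(B_{7/8})} \leq C\norm{u}_{L^2(B_1)}$, hence $\norm{F}_{L^\infty(B_{7/8})} \leq C\delta\norm{u}_{L^2(B_1)}$. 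Then represent $w$ via the Dirichlet Green's function of the constant-coefficient operator $\ddiv(a_+(0)\nabla\cdot)$ on $B_1$ and split the resulting integral into the contribution from $B_{7/8}$ (controlled by the $L^\infty$-bound on $F$ together with the integrability of the kernel $|x-y|^{1-n}$) and from the annulus $B_1\setminus B_{7/8}$ (where the kernel is bounded whenever $x\in B_{3/4}$); this delivers $\sup_{B_{3/4}}|w| \leq C\delta\norm{u}_{L^2(B_1)}$. Finally, write $u^+ - \kappa u^- - h = w + (\kappa(0) - \kappa)u^-$; from \eqref{eq:a-Dini}, \eqref{eq:a-ellip} and \eqref{eq:a-k} one has $|\kappa - \kappa(0)|\leq C\delta$, and Theorem \ref{theorem:int-Lip} gives $\norm{u}_{L^\infty(B_{3/4})} \leq C\norm{u}_{L^2(B_1)}$, completing \eqref{eq:u-h-Linf}.

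For part (ii), test the PDE for $w = v - h$ against $w$ itself: $\norm{\nabla w}_{L^2(B_1)} \leq \lambda^{-1}\norm{F}_{L^2(B_1)} \leq C\delta\norm{\nabla u}_{L^2(B_1)}$. Combined with the ellipticity bound $\norm{\nabla v}_{L^2(B_1)}\leq \lambda^{-2}\norm{\nabla u}_{L^2(B_1)}$ and $\delta\leq \tfrac{1}{2}$, the triangle inequality yields $\norm{\nabla h}_{L^2(B_1)} \leq C(\lambda)\norm{\nabla u}_{L^2(B_1)}$. For the boundary denominator the key algebraic identity is that $u^+$ and $u^-$ have disjoint supports, so
\begin{equation*}
v^2 = (u^+)^2 + \kappa(0)^2(u^-)^2 \geq \lambda^4 u^2;
\end{equation*}
since $h = v$ on $\partial B_1$, this gives $\int_{\partial B_1} h^2\,d\sigma \geq \lambda^4\int_{\partial B_1} u^2\,d\sigma$, and combining with \eqref{eq:u-freq} produces \eqref{eq:h-freq} with $c = c(\lambda)$.

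For part (iii), argue by compactness and contradiction. Suppose some $\e_0 > 0$ and sequences $(a_{j,\pm})$, $u_j$, $h_j$ satisfy the hypotheses with $\delta_j\to 0$, normalized so that $\norm{u_j}_{L^2(B_1)} = 1$, yet $\sup_{B_{3/4}}|Vu_j - \nabla h_j| > \e_0$. By Theorem \ref{theorem:int-Lip}, the $u_j$ are uniformly bounded in $W^{1,\infty}(B_{7/8})$ and the mappings $Vu_j$ share the common modulus of continuity $\omega_1$; extract subsequences along which $u_j\to u_0$ uniformly on $B_{7/8}$, $Vu_j \to V_0$ uniformly on $B_{3/4}$, $a_{j,\pm}(0)\to a_{0,\pm}$, $\kappa_j(0)\to \kappa_0$, and $h_j\to h_0$ uniformly on $B_{7/8}$ with $\nabla h_j \to \nabla h_0$ uniformly on $B_{3/4}$ by standard elliptic regularity. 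The closeness assumption $\norm{a_{j,\pm} - a_{j,\pm}(0)}_{L^\infty}\to 0$ forces $u_0$ to solve the constant-coefficient broken PDE, so $h_0 = u_0^+ - \kappa_0 u_0^-$ is itself $a_{0,+}$-harmonic and coincides with the limit of $h_j$ by uniqueness of the Dirichlet problem. In this constant-coefficient limit the affine approximant $l_{0,z}$ from Lemma \ref{lemma:int-C1} is precisely the tangent affine of $h_0$ at $z$, so $Vu_0 = \nabla h_0$; moreover, the uniqueness in \eqref{eq:phiz-lz-W12} passes to the limit along $j\to\infty$, forcing $V_0 = Vu_0$ on $B_{3/4}$. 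The uniform convergences $Vu_j \to V_0$ and $\nabla h_j \to \nabla h_0$ on $B_{3/4}$ then contradict $\sup_{B_{3/4}}|Vu_j - \nabla h_j| > \e_0$.

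\textbf{Main obstacle.} The subtle point is part (iii), specifically the identification $V_0 = \nabla h_0$. Since $Vu$ is not defined intrinsically through the (discontinuous) gradient of $u$ but rather through the unique affine approximant characterised in \eqref{eq:phiz-lz-W12}, one must verify carefully that these affine approximants for $u_j$ converge to the correct affine approximants for $u_0$ in the constant-coefficient limit; once this passage to the limit is established, the compactness argument closes cleanly.
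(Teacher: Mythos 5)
Your proposal is correct and follows essentially the same route as the paper in all three parts: the energy/Poincar\'e estimates plus the local $L^\infty$-estimate for $w = v - h$ in (i) (your Green's function representation is just that estimate written out by hand), the pointwise identity $v^2 = (u^+)^2 + \kappa(0)^2 (u^-)^2 \geq \lambda^4 u^2$ on $\partial B_1$ in (ii), and the normalised compactness/contradiction argument in (iii). The step you flag as the main obstacle --- identifying the uniform limit of $Vu_j$ with $\nabla h_0$ --- is closed in the paper exactly along the lines you indicate, by passing to the limit in the uniform $L^2$-approximation estimate \eqref{eq:int-C1-re} (with $p=2$), using the strong $L^2(B_{7/8})$-convergence of $\nabla u_j$ together with the a.e.\ identity $\nabla h_0 = \nabla u_0^+ - \kappa_0 \nabla u_0^-$.
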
 

\begin{proof}
Without loss of generality, let us assume that $\norm{u}_{L^2(B_1)} = 1$, since the statement becomes trivial for a solution vanishing everywhere. 

Let us begin with the proof of the first assertion. Denoting by $v$ function $u^+ - \kappa(0) u^-$, we observe that $h - v \in W_0^{1,2}(B_1)$ is a weak solution to 
\begin{equation}\label{eq:h-v-pde}
\ddiv( a_+(0)\nabla w) = \ddiv ((a_+ - a_+(0))\nabla u^+ - (a_- -a_-(0))\nabla u^-).
\end{equation}  
Hence, we deduce from the closeness condition on $a_\pm - a_\pm(0)$, as well as the ellipticity \eqref{eq:a-ellip} of $a_+$ and $a_-$, that 
\begin{equation}\label{eq:Dh-Dv-L2}
\norm{ \nabla (h-v)}_{L^2(B_1)} \leq \frac{\delta}{\lambda}\norm{\nabla u}_{L^2(B_1)} \leq C_1\delta,
\end{equation} 
where the last inequality follows from the local energy estimate of $u$, with $C_1>0$ depending only on $n$ and $\lambda$. Since $h-v \in W_0^{1,2}(B_1)$, the Poincar\'e inequality yields that
\begin{equation}\label{eq:h-v-L2}
\norm{h-v}_{L^2(B_1)} \leq c_1C_1\delta, 
\end{equation}
for some $c_1>0$ depending only on $n$. 

However, since Theorem \ref{theorem:int-Lip} implies that 
\begin{equation}\label{eq:u-W1inf}
\norm{u}_{W^{1,\infty}(B_{7/8})} \leq C_2,
\end{equation}
for some $C_2>0$ depending only on $n$, $\lambda$ and $\omega$, we can employ the local $L^\infty$-estimate to \eqref{eq:h-v-pde} and obtain from \eqref{eq:h-v-L2}, \eqref{eq:u-W1inf} and the closeness condition on $a_\pm-a_\pm(0)$ that
\begin{equation}\label{eq:h-v-Linf}
\norm{h-v}_{L^\infty(B_{3/4})} \leq C_3\delta. 
\end{equation} 

On the other hand, one can easily deduce from $\norm{a_\pm - a_\pm(0)}_{L^\infty(B_1)} \leq \delta$, as well as \eqref{eq:a-ellip} and \eqref{eq:a-k}, that $\norm{\kappa - \kappa(0)}_{L^\infty(B_1)} \leq c_1\delta$, for some $c_1>0$ depending at most on $n$. This combined with \eqref{eq:h-v-Linf} as well as \eqref{eq:u-W1inf} implies that 
\begin{equation}\label{eq:h-v-Linf-re}
\begin{split}
\norm{u^+ - \kappa u^- - h}_{L^\infty(B_{3/4})} &\leq \norm{u^+ - \kappa u^- - v}_{L^\infty(B_{3/4})} + C_3\delta \\
&\leq \delta \left( c_1 \norm{u^-}_{L^\infty(B_{3/4})} + C_3\right) \\
&\leq (c_1 C_2 + C_3)\delta. 
\end{split} 
\end{equation} 
This proves \eqref{eq:h-v-Linf} with $C = c_1C_2 + C_3$. 

Next, let us prove the second assertion. Due to the standing assumptions \eqref{eq:a-ellip} and \eqref{eq:a-k} that ensure $\lambda^2\leq \kappa(0)\leq \lambda^{-2} $, we have 
\begin{equation*}
\int_{\partial B_1} h^2\,d\sigma_x = \int_{\partial B_1} \left( (u^+)^2 + \kappa(0)^2 (u^-)^2\right)\,d\sigma_x \geq \lambda^4\int_{\partial B_1} u^2\,d\sigma_x.
\end{equation*}
On the other hand, in view of \eqref{eq:h-v-pde} and assumption $\delta < \frac{1}{2}$, it follows immediately from \eqref{eq:a-ellip} and \eqref{eq:u-freq} that
\begin{equation*}
\int_{B_1} |\nabla (h - (u^+ - \kappa(0)u^-))|^2\,dx  \leq \frac{1}{4\lambda^4} \int_{B_1} |\nabla u|^2\,dx.
\end{equation*} 
Combining the last two estimates, we arrive at \eqref{eq:h-freq}. 

We are now in a position to verify the last assertion. Suppose towards a contradiction that for each $j=1,2,\cdots$, we can find a pair $(a_{j,+},a_{j,-})$ of coefficient matrices in $(C^{0,\omega}(B_1))^{n^2}$ satisfying \eqref{eq:a-ellip}, \eqref{eq:a-Dini}, and \eqref{eq:a-k} as well as $\norm{a_{j,\pm} - a_{j,\pm}(0)}_{L^\infty(B_1)} \leq \frac{1}{j}$, and a nontrivial weak solution $u_j\in W^{1,2}(B_1)$ to \eqref{eq:main-sing} with $\norm{u_j}_{L^2(B_1)} = 1$, such that with the $a_+(0)$-harmonic function $h_j \in  v_j + W_0^{1,2}(B_1)$, with $v_j = u_j^+ - \kappa_j(0) u_j^-$, one has 
\begin{equation}\label{eq:Duj-Dhj-Linf}
\sup_{B_{3/4}} | V_ju_j - \nabla h_j| > \e_0,
\end{equation} 
infinitely often as $j\ra\infty$, with a fixed constant $\e_0>0$, where $V_ju_j$ is given as in Theorem \ref{theorem:int-Lip}; here we specify the subscript $j$ on $V_j$ as the coefficient matrices $a_{j,+},a_{j,-}$ are varying along the sequence.

Following the above argument, we can verify that $h_j$ satisfies \eqref{eq:h-v-Linf-re}, which now reads
\begin{equation}\label{eq:uj-hj-Linf}
\norm{u_j^+ - \kappa_j u_j^- - h_j}_{L^\infty(B_{7/8})} \leq \frac{C_4}{j}, 
\end{equation} 
for some $C_4>0$ depending only on $n$, $\lambda$ and $\omega$. 

Without loss of generality, we may assume that $a_{j,+}(0) \ra I$, and $\kappa_{j,+}(0) \ra \kappa_0$, for some constant $\kappa_0\in[\lambda^2,\frac{1}{\lambda^2}]$, as $j\ra\infty$. Then due to the assumption that $\norm{a_{j,\pm} - a_{j,\pm}(0)}_{L^\infty(B_1)} \leq \frac{1}{j}$, we have $a_{j,+} \ra I$, $a_{j,-} \ra \kappa_0 I$ and $\kappa_j \ra \kappa_0$ uniformly on $B_1$. 

Due to \eqref{eq:h-v-L2} (applied to $v_j$ and $h_j$) and the local energy estimate on $u_j$, we know that $\norm{h_j}_{L^2(B_1)} \leq C_5$ for some $C_5>0$ depending only on $n$ and $\lambda$. Hence, it follows from the interior estimates for harmonic functions that $\norm{h_j}_{C^{1,1}(\overline{B_{7/8}})}\leq C_6$, for some $C_6>0$ depending only on $n$ and $\lambda$. Thus, there is some $h_0 \in C^{1,1}(\overline{B_{7/8})}$ such that $h_j \ra h_0$ and $\nabla h_j\ra \nabla h_0$ uniformly on $\overline{B_{7/8}}$, along a subsequence that we will continue to denote by $\{h_j\}_{j=1}^\infty$. Since we have $a_{j,+}(0) \ra I$, the uniform convergence of $\{h_j\}_{j=1}^\infty$ to $h_0$ implies that $h_0$ is a harmonic function on $B_{3/4}$. 

Moreover, since $u_j$ also verifies \eqref{eq:u-W1inf}, one can find some $u_0\in W^{1,\infty}(B_{7/8})$ such that $u_j\ra u_0$ uniformly over $\overline{B_{7/8}}$ and $\nabla u_j \ra \nabla u_0$ strongly in $L^2(B_{7/8})$, after extracting a subsequence if necessary. Recalling that $\kappa_j\ra \kappa_0$ uniformly on $B_1$, for certain constant $\kappa_0$, we can observe, by passing to the limit in \eqref{eq:uj-hj-Linf}, that 
\begin{equation}\label{eq:u0-h0}
h_0 = u_0^+ -\kappa_0 u_0^-\quad\text{in }B_{7/8}. 
\end{equation} 

In addition, due to Theorem \ref{theorem:int-Lip}, or \eqref{eq:int-Lip} to be more specific, we also know that $\norm{V_ju_j}_{C^{0,\omega_1}(\overline{B_{3/4}})} \leq C_7$ for some $C_7>0$ depending only on $n$, $\lambda$ and $\omega$. This yields another mapping $p_0 \in (C^{0,\omega_1}(\overline{B_{3/4}})^n$ to which $\{V_ju_j\}_{j=1}^\infty$ converge uniformly on $\overline{B_{3/4}}$, after extracting a further subsequence if necessary. 

On the other hand, applying \eqref{eq:int-C1-re} (which also holds for $p = 2$) to $u_j$ and $V_ju_j$, we have
\begin{equation}\label{eq:Duj-pj-L2}
\sup_{z\in\overline{B_{3/4}}} \sup_{r\in(0,\frac{1}{8})} \frac{\norm{\nabla u_j^+ - \kappa_j(z) \nabla u_j^- - V_ju_j(z)}_{L^2(B_r(z))}}{r^{\frac{n}{2}} \omega_1(r)} \leq C_9,
\end{equation} 
for some constant $C_9>0$ that depends only on $n$, $\lambda$ and $\omega$. Since the boundedness of $\{u_j\}_{j=1}^\infty$ in $W^{1,\infty}(B_{7/8})$ implies the strong convergence $\nabla u_j \ra \nabla u_0$ in $L^2(B_{7/8})$, and since \eqref{eq:u0-h0} implies $\nabla h_0 = \nabla u_0^+ -\kappa_0 \nabla u_0^-$ a.e. in $B_{7/8}$, taking limits in \eqref{eq:Duj-pj-L2} with each $r\in(0,\frac{1}{8})$ fixed, we deduce that 
\begin{equation*}
\sup_{z\in \overline{B_{3/4}}} \sup_{r\in(0,\frac{1}{8})} \frac{\norm{\nabla h_0 - p_0(z)}_{L^2(B_r(z))}}{r^{\frac{n}{2}} \omega_1(r)} \leq C_9,
\end{equation*} 
and in particular, 
\begin{equation}\label{eq:Dh0-p0}
\nabla h_0 = p_0\quad\text{in }\overline{B_{3/4}}.
\end{equation} 
This leads us to a contradiction to \eqref{eq:Duj-Dhj-Linf}, since $\nabla h_j \ra \nabla h_0$ and $V_ju_j\ra p_0$ uniformly on $\overline{B_{3/4}}$. This finishes the proof. 
\end{proof}

Next we shall present a nodal set comparison lemma for weak solutions to broken quasilinear PDEs \eqref{eq:main-sing}. 

\begin{lemma}\label{lemma:compare}
Let $a_+,a_- \in (C^{0,\omega}(B_1))^{n^2}$ satisfy \eqref{eq:a-ellip}, \eqref{eq:a-Dini} and \eqref{eq:a-k}, and suppose that $u \in W^{1,2}(B_1)$ is a nontrivial weak solution to \eqref{eq:main-sing} such that $\norm{u}_{L^2(B_1)} \leq 1$, with mapping $Vu$ as in Theorem \ref{theorem:int-Lip}. Then given any real $\eta \in (0,\frac{1}{16})$, and modulus of continuity $\theta$, there exists a constant $\e>0$, depending only on $n$, $\lambda$, $\omega$, $\theta$ and $\eta$, such that if $h \in C^{1,\theta}(B_{7/8})$ is an arbitrary function for which $\norm{h}_{C^{1,\theta}(B_{7/8})} \leq 1$, $\sup_{B_{3/4}} |u^+ - \kappa u^- - h| \leq \e$ and $\sup_{B_{3/4}} |Vu - \nabla h| \leq \e$, then 
\begin{equation}\label{eq:compare}
\begin{split}
&H^{n-1} \left( B_{\frac{3}{4} - \eta} \cap \{ u = 0, |Vu| \geq \eta\} \right) \\
&\leq (1 + C\omega_1(\eta)) H^{n-1} \left( B_{\frac{3}{4}} \cap \left\{ h = 0, |\nabla h| \geq\frac{\eta}{2}\right\} \right),
\end{split}
\end{equation} 
where $C>0$ is a constant depending only on $n$, $\lambda$, $\omega$ and $\theta$. 
\end{lemma}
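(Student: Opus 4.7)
The plan is to adapt the nodal set comparison argument of Hardt--Simon \cite{HS} and Han--Lin \cite{HL94} to our broken setting by systematically replacing $\nabla u$ (which is merely bounded and discontinuous across $\{u=0\}$) with the uniformly continuous mapping $Vu$ from Theorem \ref{theorem:int-Lip}. Two structural observations drive the argument: since $\kappa>0$, the nodal set of $u$ coincides with the nodal set of $\tilde u := u^+-\kappa u^-$; and $\tilde u$ admits a pointwise first-order Taylor expansion $\tilde u(x) - \tilde u(z) = Vu(z)\cdot(x-z) + O(|x-z|\omega_1(|x-z|))$ at every $z$, coming from Lemma \ref{lemma:int-C1}. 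Thus one is effectively in a $C^1$-situation with $\tilde u$ playing the role of the solution and $Vu$ its gradient, subject to $|\tilde u-h|\leq\e$ and $|Vu-\nabla h|\leq\e$ pointwise on $B_{3/4}$.

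Set $T := B_{3/4-\eta}\cap\{u=0,|Vu|\geq\eta\}$ and $S := B_{3/4}\cap\{h=0,|\nabla h|\geq\eta/2\}$. I would first observe that $T$ is locally a $C^{1,\omega_1}$-hypersurface by Theorem \ref{theorem:fb-reg}, with a uniform graph radius $r_0=r_0(\eta)$ since $|Vu|\geq\eta$ throughout $T$; similarly $S$ is locally a $C^{1,\theta}$-hypersurface by the implicit function theorem applied to $h$. Then I would define the projection
\begin{equation*}
\Phi(z) := z + t(z)\mu(z),\qquad \mu(z):=\frac{Vu(z)}{|Vu(z)|},
\end{equation*}
for $z\in T$, where $t(z)$ is the smallest real in absolute value with $h(z+t\mu(z))=0$. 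Since $\tilde u(z)=0$ forces $|h(z)|\leq\e$, and since the one-variable map $s\mapsto h(z+s\mu(z))$ has slope $\nabla h(z)\cdot\mu(z) = |Vu(z)|+O(\e)\geq\eta/2$ at $s=0$, the quantity $t(z)$ is uniquely determined with $|t(z)|\leq 2\e/\eta$; a short computation then gives $|\nabla h(\Phi(z))|\geq\eta/2$, so $\Phi(z)\in S$ once $\e=\e(\eta,\theta)$ is small enough.

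The central step is the bi-Lipschitz estimate $|\Phi(z')-\Phi(z)| = (1\pm C\omega_1(\eta))|z'-z|$ for $z,z'\in T$ with $|z'-z|\leq r_0$. Decomposing
\begin{equation*}
\Phi(z')-\Phi(z) = (z'-z) + \bigl(t(z')-t(z)\bigr)\mu(z) + t(z')\bigl(\mu(z')-\mu(z)\bigr),
\end{equation*}
one combines (a) $|\mu(z')-\mu(z)|\leq C\omega_1(|z'-z|)/\eta$ from the $\omega_1$-continuity of $Vu$; (b) the smallness $|t(\cdot)|\leq C\e/\eta$; (c) the tangent-plane identity $|Vu(z)\cdot(z'-z)|\leq C|z'-z|\omega_1(|z'-z|)$, obtained by applying Lemma \ref{lemma:int-C1} at $z\in\{u=0\}$ to the point $x=z'\in\{u=0\}$; and (d) first-order expansion of the constraint $h(\Phi(z'))=h(\Phi(z))=0$, which uses $h\in C^{1,\theta}$ to extract $t(z')-t(z)$. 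These four ingredients together yield the bi-Lipschitz bound, with injectivity of $\Phi$ on $T$ a direct consequence of the lower inequality.

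Finally, covering $T$ by balls of radius $r_0$ and applying the area formula to the Lipschitz map $\Phi$ on each, the singular-value bound translates into $J^T\Phi\geq(1-C\omega_1(\eta))^{n-1}$, and injectivity gives
\begin{equation*}
(1-C\omega_1(\eta))^{n-1}H^{n-1}(T)\leq\int_T J^T\Phi\,dH^{n-1} = H^{n-1}(\Phi(T))\leq H^{n-1}(S),
\end{equation*}
from which \eqref{eq:compare} follows after rearrangement. The hard part is the bi-Lipschitz estimate of the third paragraph, because $Vu\in C^{0,\omega_1}$ is not differentiable, so $\Phi$ is only H\"older and a standard differential-calculus argument is unavailable. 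The remedy is to replace the missing differentiability of $\tilde u$ by the pointwise Taylor expansion of Lemma \ref{lemma:int-C1}, and to extract the regularity of $t(\cdot)$ from the genuine $C^{1,\theta}$-regularity of $h$ via the integrated constraint $h\circ\Phi\equiv 0$.
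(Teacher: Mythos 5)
Your overall strategy differs from the paper's: you build a single normal-projection map $\Phi(z)=z+t(z)\mu(z)$ from the regular nodal set of $u$ into that of $h$ and invoke the area formula, whereas the paper (following Lemma 4.8 of \cite{HS}) writes, near each regular point $z$, both $\{u=0\}$ and $\{h=0\}$ as $C^1$-graphs over the \emph{same} hyperplane $\Pi_z$ with gradients bounded by $\omega_1(\eta)$ and $2\omega_1(\eta)$ respectively, compares the two graph areas directly, and then glues with a covering argument. Your route would be attractive, since a globally injective bi-Lipschitz $\Phi$ would dispense with the covering step; unfortunately the bi-Lipschitz estimate, which you yourself flag as the hard part, is false as stated in the Dini regime. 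In your decomposition the term $t(z')\bigl(\mu(z')-\mu(z)\bigr)$ is only controlled by $C\e\,\omega_1(|z'-z|)/\eta^2$, because $Vu$ (hence $\mu$) is merely $C^{0,\omega_1}$ and $|t|\leq C\e/\eta$ is the best available bound. Since $r^{-1}\omega_1(r)$ is non-increasing and, for a non-Lipschitz modulus (e.g.\ $\omega_1(r)=r^\alpha$ with $\alpha<1$, or $\omega_1(r)=(\log\frac{1}{r})^{-1}$), tends to $+\infty$ as $r\ra 0$, no choice of $\e>0$ makes this term $\leq C\omega_1(\eta)|z'-z|$ uniformly for small $|z'-z|$. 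Hence $\Phi$ fails to be Lipschitz precisely at small scales --- the scales that matter for $H^{n-1}$ --- and the lower bound $|\Phi(z')-\Phi(z)|\geq(1-C\omega_1(\eta))|z'-z|$, on which both the injectivity and the Jacobian estimate rest, breaks down. You acknowledge that $\Phi$ is ``only H\"older'' and then assert bi-Lipschitzness anyway; these two statements are incompatible.

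The gap is repairable by freezing the projection direction locally: on a ball $B_\rho(z_0)$ with $z_0\in T$, project along the fixed unit vector $\mu(z_0)$, so the offending term disappears and the constraint $h(z+t(z)\mu(z_0))=0$ yields $|t(z')-t(z)|\leq C\eta^{-1}\left(\omega_1(\rho)+\theta(2\rho)+\e\right)|z'-z|$, which is $\leq\omega_1(\eta)|z'-z|$ after shrinking $\rho$ and then $\e$. But this is exactly the statement that both nodal sets are graphs over the hyperplane orthogonal to $\mu(z_0)$ with slopes $O(\omega_1(\eta))$, i.e.\ the paper's local comparison, and one is then forced back to the covering argument of \cite{HS} to sum the local estimates without overcounting images in $S$. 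Your remaining ingredients --- the tangency estimate $|Vu(z)\cdot(z'-z)|\leq C|z'-z|\,\omega_1(|z'-z|)$ for $z,z'\in\{u=0\}$ from Lemma \ref{lemma:int-C1}, the lower bound $\nabla h\cdot\mu\geq\eta/2$ along the projection line, and the bound $|t|\leq C\e/\eta$ --- are correct and coincide with the quantitative inputs the paper uses.
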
 

\begin{proof}
The main idea of the proof is the same with Lemma 4.8 in \cite{HS}, so let us present a sketch of our argument.

Fix a point $z\in \{ u = 0 , |Vu|\geq \eta\} \cap B_{\frac{3}{4} - \eta}$. Due to Theorem \ref{theorem:fb-reg}, one can choose a small $\rho \in (0,\frac{1}{8})$, depending only on $n$, $\lambda$, $\omega$ and $\eta$, such that $\{ u = 0 \}\cap B_\rho(z)$ is a $C^{1,\omega_1}$-graph. In particular, denoting by $\Pi_z$ the tangent hyperplane to this graph, we can find some function $\psi_z \in C^{1,\omega_1}(B_\rho(z))$ such that
\begin{equation}\label{eq:comp-0}
\{u = 0\}\cap B_\rho(z) = \left\{ y + \psi_z (y) \frac{Vu(z)}{|Vu(z)|} : y \in \Pi_z \right\}.
\end{equation}
Now taking $\rho$ smaller if necessary, one can also obtain
\begin{equation}\label{eq:comp-01}
\sup_{B_\rho(z)} | \nabla \psi_z | \leq \omega_1(\eta)
\end{equation}
This can be done with Theorem \ref{theorem:int-Lip}. Since $\norm{u}_{L^2(B_1)} \leq 1$, we deduce that $\norm{u}_{W^{1,\infty}(B_{7/8})} + \norm{Vu}_{C^{0,\omega_1}(B_{7/8})} \leq C_0$, for some $C_0>0$ depending only on $n$, $\lambda$ and $\omega$. Since $|Vu(z)| \geq \eta$, we can choose a smaller $\rho$, but still depending on $n$, $\lambda$, $\omega$ and $\eta$, such that 
\begin{equation}\label{eq:comp-1}
\inf_{B_\rho(z)} |Vu| \geq \frac{4\eta}{5},\quad \text{and}
\end{equation}
\begin{equation}\label{eq:comp-11}
\max_{1\leq i\leq n} \osc_{B_\rho(z)} \frac{(Vu)\cdot e_i }{|Vu|}\leq \omega_1(\eta).
\end{equation}

Now since $\sup_{B_{3/4}} |u^+ - \kappa u^- - h| \leq \e$ and $\sup_{B_{3/4}} |Vu - \nabla h| \leq \e$, we can select $\e>0$, depending only on $\rho$, $\eta$ and $\omega$, to deduce from $z\in\{u = 0\}\cap B_{\frac{3}{4} - \eta}$, \eqref{eq:comp-1} and \eqref{eq:comp-11} that 
\begin{equation}\label{eq:comp-2}
\{h = 0\}\cap B_{\rho\omega_1(\eta)} \neq \emptyset.
\end{equation} 
\begin{equation}\label{eq:comp-21}
\inf_{B_\rho(z)} |h| \geq \frac{3\eta}{4},\quad\text{and} 
\end{equation} 
\begin{equation}\label{eq:comp-22}
\sup_{B_\rho(z)} \left| \frac{Vu}{|Vu|} - \frac{\nabla h}{|\nabla h|} \right| \leq \rho\omega_1(\eta). 
\end{equation} 

Choosing $\rho$ even smaller, now depending on $\theta$ (the $C^1$-character of $h$) but no more, $\{h = 0\}\cap B_\rho(z)$ becomes a $C^{1,\theta}$-graph because of \eqref{eq:comp-21}. Moreover, due to \eqref{eq:comp-2} and \eqref{eq:comp-22}, we can express
\begin{equation}
\{h = 0\} \cap B_\rho(z) = \left\{ y + \zeta_z (y) \frac{Vu(z)}{|Vu(z)|} : y \in \Pi_z \right\},
\end{equation} 
(even if $z\neq \{h = 0\}\cap B_\rho(z)$, and $\Pi_z$ is not a perpendicular to $|h(z)|^{-1} h(z)$), with some real-valued function $\zeta_z\in C^{1,\theta}(B_\rho(z))$ satisfying
\begin{equation}\label{eq:comp-24}
\sup_{B_\rho(z)} |\nabla \zeta_z| \leq 2\omega_1(\eta),
\end{equation}
because of \eqref{eq:comp-11} and \eqref{eq:comp-22}. Tracking down the dependence of $\rho$ and $\e$, we observe that $\rho$ depends only on $n$, $\lambda$, $\omega$, $\theta$ and $\eta$, and so does $\e$. 

Finally, we can employ the covering argument as in the proof of Lemma 4.8 in \cite{HS} to derive \eqref{eq:compare}. This final step only involves some standard techniques in analysis, so we shall refer to \cite{HS} for the details. 
\end{proof} 

Finally, we are ready to estimate the $H^{n-1}$-measure of the nodal set. We shall follow the argument of Theorem 3.1 in \cite{HL94}, where the main tools, namely the compactness and the comparison lemma for nodal sets, are already established in Lemma \ref{lemma:compact} and respectively Lemma \ref{lemma:compare}. The rest of the argument is more or less straightforward, so we shall only sketch the proof and refer to \cite{HL94} for the details. 

\begin{proof}[Proof of Theorem \ref{theorem:meas}]
As in the proof of Theorem 3.1 in \cite{HL94}, we are going to construct a sequence $\{\cB_k\}_{k=0}^\infty$ of finite covers of $\{u=0\}\cap B_{1/2}$ such that for each $k\in\N\cup\{0\}$, any ball $B\in \cB_k$ has radius not exceeding $2^{-k}$, so that 
\begin{equation}\label{eq:Bk-1}
\{u= 0, |Vu| = 0\}\cap B_{1/2} \subset \bigcup_{B \in \cB_k} B, 
\end{equation}
\begin{equation}\label{eq:Bk-2}
\sum_{B \in \cB_k} \left(\frac{\diam B}{2}\right)^{n-1} \leq \frac{1}{2^k}, \quad\text{and}
\end{equation} 
\begin{equation}\label{eq:Bk-3}
H^{n-1} \left( \{u=0\}\cap \bigcup_{B\in \cB_{k-1}} B\setminus \bigcup_{B\in \cB_k} B \right) \leq \frac{cN}{2^k},
\end{equation} 
where $c$ is a positive constant depending only on $n$ and $\lambda$. Our conclusion will follow immediately from the way these coverings are constructed. 

Let $\cB_0 = \{B_{1/2}\}$, and assume that we have already constructed $\cB_k$, for $k=0,1,\cdots,l-1$, for some $l\in \N$. To construct $\cB_l$, let us fix $B = B_r(z) \in \cB_{l-1}$. Consider the scaling function $v: \overline{B_1} \ra \R$ defined by 
\begin{equation*}
v(x) =  \frac{u(2rx + z)}{(2r)^{\frac{1-n}{2}}\norm{u}_{L^2(B_{2r}(z))}}.
\end{equation*}
Then $v \in W^{1,2}(B_1)$ and it satisfies \eqref{eq:main-sing} in the weak sense with coefficients $b_+,b_- : B_1 \ra\R^{n^2}$ given by $b_\pm (x) = a_\pm (2rx + z)$. Clearly, $b_+,b_- \in (C^{0,\omega}(B_1))^{n^2}$ and satisfy \eqref{eq:a-ellip}, \eqref{eq:a-Dini} and \eqref{eq:a-k}; here the ellipticity constant, $\lambda$, and the Dini modulus of continuity, $\omega$, remain the same, and while \eqref{eq:a-k} is satisfied with $\mu:B_1 \ra [\lambda^2,\lambda^{-2}]$, defined by $\mu(x) = \kappa(2rx + z)$. 

Due to the uniform frequency condition \eqref{eq:almgren} on $u$, we have $\int_{B_1} |\nabla v|^2\,dx \leq N$ and $\int_{\partial B_1} v^2\,dx = 1$. In particular, a trace inequality (see Remark \ref{remark:meas}) yields that 
\begin{equation}\label{eq:v-L2}
\norm{v}_{L^2(B_1)} \leq c_1N,
\end{equation}
for some positive constant $c_1$ depending only on $n$. Therefore, applying Lemma \ref{lemma:compact} to $v$ for a sufficiently small number $\e$ to be determined later, we obtain a $b_+(0)$-harmonic function $g \in v^+ - \mu(0) v^- + W_0^{1,2}(B_1)$ and a small number $\bar\delta\in(0,\frac{1}{2})$ determined by $n$, $\lambda$, $\omega$ and $\e$ only, such that for any $\delta \in (0,\bar \delta]$,
\begin{equation}\label{eq:v-g-sup}
\sup_{B_{3/4}} |v^+ - \mu v^- - g| \leq C_1 \delta N, 
\end{equation} 
\begin{equation}\label{eq:q-Dg-sup}
\sup_{B_{3/4}} |Vv - \nabla g| \leq c_1 \e N, \quad\text{and}
\end{equation} 
\begin{equation}\label{eq:g-almgren}
\frac{\int_{B_1} |\nabla g|^2\,dx}{\int_{\partial B_1} g^2\,dx } \leq c_2N,
\end{equation} 
provided that $\norm{b_\pm - b_\pm(0)}_{L^\infty(B_1)} \leq \delta$; here $Vv$ is given as in Theorem \ref{theorem:int-Lip}, $C_1>0$ depends only on $n$, $\lambda$ and $\omega$, while $c_2>1$ depends only on $\lambda$. The last part yields the smallness condition on $\omega(1)$ that $\omega(1) \leq \delta$, since from the scaling relation we have $\norm{b_\pm - b_\pm(0)}_{L^\infty(B_1)} \leq \omega(2r) \leq \omega(1)$. 

Since $g$ is a $b_+(0)$-harmonic function with frequency bound \eqref{eq:g-almgren}, we can choose from Lemma 3.3 in \cite{HL94} a constant $\gamma > 0$, depending only on $n$, $\lambda$ and $N$, such that there exists a finitely collection $\{B_{r_i}(z_i)\}_{i\in I}$ of balls with $r_i \in (0,\frac{1}{2}]$ for which 
\begin{equation}\label{eq:g-cover}
B_{\frac{3}{4}} \cap \{|\nabla g| < \gamma \} \subset \bigcup_{i\in I} B_{r_i}(x_i)\quad\text{and}
\end{equation} 
\begin{equation}\label{eq:cover-rad}
\sum_{i\in I} r_i^{n-1} \leq \frac{1}{2}. 
\end{equation} 
Taking $\e$ in \eqref{eq:q-Dg-sup} small enough such that $3c_1\e N \leq \gamma$, we have 
\begin{equation}\label{eq:g-cover-re}
B_{\frac{1}{2}} \cap \{v = 0, |Vv| = 0\} \subset \bigcup_{i\in I} B_{r_i} (x_i). 
\end{equation} 

Let us select $\e$ (and $\delta$ accordingly) even smaller so that the nodal set comparison lemma (Lemma \ref{lemma:compare}) holds with $u = \frac{v}{c_3N}$, $h = \frac{g}{c_3 N}$ and $\eta = \frac{2\gamma}{3c_3N}$, where $c_3>0$ is chosen such that $\norm{v}_{L^2(B_1)} \leq c_3N$ and $\norm{g}_{C^{1,1}(B_{7/8})} \leq c_3N$; due to \eqref{eq:v-L2} and the choice that $g$ is $b_+(0)$-harmonic on $B_1$ with $g - (v^+ - b_+(0) v^-)\in W_0^{1,2}(B_1)$, $c_3 \geq c_1$ and $c_3$ must depend only on $n$ and $\lambda$. Then we have 
\begin{equation*}
\begin{split}
& H^{n-1} \left( B_{\frac{1}{2}} \cap \left\{ v = 0, |Vv| \geq \frac{2\gamma}{3} \right\} \right) \\
& \leq (1 + C_2\omega_1(\gamma))H^{n-1} \left( B_{\frac{3}{4}} \cap \left\{ g = 0, |\nabla g| \geq \frac{\gamma}{3} \right\} \right), 
\end{split} 
\end{equation*} 
where $C_2>0$ depends only on $n$, $\lambda$ and $\omega$. Taking $\e$ so as to satisfy $3c_3\e N \leq \gamma$, we can deduce from \eqref{eq:q-Dg-sup} and\eqref{eq:g-cover} yields that 
\begin{equation}\label{eq:v-meas}
H^{n-1} \left( B_{\frac{1}{2}} \cap \{ v = 0 \} \setminus \bigcup_{i \in I} B_{r_i}(x_i) \right) \leq C_3 N, 
\end{equation} 
with $C_3$ depending only on $n$, $\lambda$ and $\omega$, where in the last inequality we used Theorem 3.1 in \cite{Lin91}, which yields $H^{n-1}(B_{\frac{3}{4}}\cap \{g=0\}) \leq c_4N$ for some positive constant $c_4$ depending only on $n$ and $\lambda$. 

Tracking down the dependence on $\e$, we see that it depends only on $n$, $\lambda$, $\omega$ and $N$. Now we choose $\delta$, involved in the smallness condition of $\omega(1)$ above so that \eqref{eq:v-g-sup}, \eqref{eq:q-Dg-sup} and \eqref{eq:g-almgren} hold, so as to satisfy $C_1\delta \leq c_3\e$. Clearly, $\delta$ is determined by the same set of parameters as $\e$. In particular, these constants do not vary upon $l$, the index for the iteration step.  

Now let us return to $B = B_r(z)\in \cB_{l-1}$, define $\cB_l^B$ by the finite collection $\{B_{r_i r}(2rx_i + z)\}_{i\in I}$ and then $\cB_l$ by $\bigcup_{B \in \cB_{l-1}} \cB_l^B$. To this end, one can easily check from \eqref{eq:g-cover-re}, \eqref{eq:cover-rad} and \eqref{eq:v-meas} that initial hypotheses \eqref{eq:Bk-1}, \eqref{eq:Bk-2} and respectively \eqref{eq:Bk-3} are satisfied with $k = l$. Therefore, one can inductively construct the finite covering of $\{u=0\}\cap B_{1/2}$, and finally proving the desired $H^{n-1}$-measure estimate. Let us omit the rest of the argument, which is identical with that of Theorem 3.1 in \cite{HL94}, and finish the proof. 
\end{proof}

\noindent
{\bf Acknowledgement.} 
This project was supported by Knut and Alice Wallenberg Foundation. The author would also like to thank H. Shahgholian and J. Andersson for valuable discussions on this topic.


\end{document}